\newtheorem{thm}{Theorem}[section]
\newtheorem{cor}[thm]{Corollary}
\newtheorem{lem}[thm]{Lemma}
\newtheorem{prop}[thm]{Proposition}
\theoremstyle{definition}
\newtheorem{defn}[thm]{Definition}
\theoremstyle{remark}
\newtheorem{rem}[thm]{Remark}
\newtheorem{ex}[thm]{Example}
\numberwithin{equation}{section}
\renewcommand{\O}{\mathcal{O}}
\newcommand{\K}{\mathcal{K}}
\newcommand{\G}{\mathcal{G}}
\newcommand{\idsh}{\mathcal{I}}
\newcommand{\ssh}{\mathcal{O}}
\newcommand{\N}{\mathbb{N}}
\newcommand{\Aff}{\mathbb{A}}
\newcommand{\suchThat}{\enspace \vert \enspace}
 \DeclareMathOperator{\rk}{rk}
 \DeclareMathOperator{\Hom}{Hom}
\DeclareMathOperator{\Spec}{Spec} \DeclareMathOperator{\Proj}{Proj}
 \DeclareMathOperator{\inv}{inv}
\DeclareMathOperator{\Tor}{Tor} 
\DeclareMathOperator{\Gl}{Gl} \DeclareMathOperator{\Sl}{Sl}
\DeclareMathOperator{\id}{id} \DeclareMathOperator{\W}{W}
\DeclareMathOperator{\Frob}{Frob}
\DeclareMathOperator{\Loop}{L}
\DeclareMathOperator{\X}{X}
\DeclareMathOperator{\T}{T}
\DeclareMathOperator{\weights}{P}
\DeclareMathOperator{\diag}{diag}
\DeclareMathOperator{\Grass}{Grass}
\newcommand{\cochar}{\check{\X}}
\newcommand{\domcochar}{\check{\X}_{+}}
\newcommand{\coweights}{\check{\weights}}
\newcommand{\fpv}[2]{F_{p^{#1}}(#2)}
\begin{document}

\title[Demazure resolutions and lattices with infinitesimal structure]{Demazure resolutions as varieties of lattices with infinitesimal structure}
\author{Martin Kreidl}
\address{Mathematisches Institut der Universit\"at Bonn, Beringstra\ss e 4, 53115 Bonn, Germany}
\email{kreidl@math.uni-bonn.de}



\begin{abstract}
Let $k$ be a field of positive characteristic. We construct, for each dominant coweight $\lambda$ of the standard maximal torus in $\Sl_{n}$, a closed subvariety $\mathcal{D}(\lambda)$ of the multigraded Hilbert scheme of an affine space over $k$, such that the $k$-valued points of $\mathcal{D}(\lambda)$ can be interpreted as lattices in $k((z))^{n}$ endowed with infinitesimal structure. Moreover, for any $\lambda$ we construct a universal homeomorphism from $\mathcal{D}(\lambda)$ to a Demazure resolution of the Schubert variety $\mathcal{S}(\lambda)$ associated with $\lambda$ in the affine Grassmannian.
Lattices in $\mathcal{D}(\lambda)$ have non-trivial infinitesimal structure if and only if they lie over the boundary of the big cell of $\mathcal{S}(\lambda)$.
\end{abstract}
\maketitle

\section{Introduction}

Let $k$ be a field and consider the group $G=\Sl_{n}$ over $k$. Associated with $G$ we have the affine Grassmannian $\mathcal{G}$ which we regard as a functor from $k$-algebras to sets, whose set of $k$-valued points is the set of lattices $\mathcal{L} \subset k((z))^{n}$ such that $\wedge^{n}\mathcal{L} = k[[z]]$. The affine Grassmannian is the inductive limit of the $k$-schemes $\mathcal{G}^{(N)}$, where $\mathcal{G}^{(N)}(k)=\lbrace \mathcal{L} \in \mathcal{G} \text{ such that } z^{N}k[[z]]^{n}\subset\mathcal{L}\subset z^{-N}k[[z]]^{n}\rbrace$. Each `finite piece' $\mathcal{G}^{(N)}$ is seen to be a closed subscheme of the ordinary Grassmannian $\Grass_{nN,2nN}$ by identifying a lattice $\mathcal{L}$ with its image in $z^{-N}k[[z]]^{n}/z^{N}k[[z]]^{n}\simeq k^{2nN}$. From this point of view, $\mathcal{G}^{(N)}$ is just the closed subvariety of $\Grass_{nN,2nN}$ which parametrizes those $nN$-dimensional subspaces of $k^{2nN}$ which are stable under the endomorphism of $k^{2nN}$ corresponding to multiplication by $z$ in $k((z))^{n}$. Equivalently, one could say that $\mathcal{G}^{(N)}$ parametrizes $nN$-dimensional linear subvarieties of $\Aff^{2nN}_{k}$ which are stable under the endomorphism of $\Aff_{k}^{2nN}$ `given by multiplication by $z$'.
Moreover, $\mathcal{G}$ carries a natural $\Sl_{n}(k[[z]])$-operation whose orbit-closures are by definition the Schubert-varieties in $\mathcal{G}$. Any Schubert variety $\mathcal{S}$ can be realized as a closed subvariety of a finite piece $\mathcal{G}^{(N)}$ of the affine Grassmannian.

In this paper we are going to investigate a variant of the `finite pieces' of the affine Grassmannian by considering parameter spaces for certain \emph{non-linear} subvarieties of $\Aff_{k}^{nN}$. This will lead us to objects which are closely related to Demazure varieties.
\smallskip

From now on, let $k$ have positive characteristic $p$.
\smallskip

\noindent For any $k$-algebra $R$, there is a ring structure $R[[z]]^{F}$ on the set $R^{\mathbb{N}}$, with addition given componentwise and multiplication defined by
\begin{multline*}
(a_{0}+a_{1}z+a_{2}z^{2}+\dotsb)(b_{0}+b_{1}z+b_{2}z^{2}+\dotsb) := \\ := a_{0}b_{0}+(a_{0}^{p}b_{1}+a_{1}b_{0}^{p})z+(a_{0}^{p^{2}}+a_{1}^{p}b_{1}^{p}+a_{2}b_{0}^{p^{2}})z^{2}+\dotsc.
\end{multline*}
We call this ring the `ring of Frobenius-twisted power series'.

Let us consider $\Aff^{nN}_{k} = (\Aff^{N}_{k})^{n}$ as a scheme endowed with the module structure defined by the ring operations in $k[[z]]^{F}$. On its coordinate ring $k[x_{i,j}; i=1,\dots,n; j=0,\dotsc,N-1]$ we fix the grading given by $\deg x_{i,j}=p^{j}$. By a result of Haiman and Sturmfels, \cite{haiman-2002}, there exists for any noetherian graded $k$-algebra $A$ a $k$-scheme, called the `multigraded Hilbert scheme', parametrizing `admissible' ideals $I\subset A$ ($I$ is called admissible if $I$ is graded with constant Hilbert function and locally free quotient $A/I$). In particular, there exists a multigraded Hilbert scheme for $\Aff_{k}^{nN}$ with the above grading, and it carries a natural action of $\Sl_{n}(k[[z]]^{F})$. Let us call a closed subscheme $V\subset \Aff_{k}^{nN}$ defined by an admissible ideal a \emph{lattice scheme}, if $V$ is stable under the module operations defined on $\Aff_{k}^{nN}$. Then the action of $\Sl_{n}(k[[z]]^{F})$ on the multigraded Hilbert scheme restricts to an action on the \emph{closed subvariety} of lattice schemes.
With any dominant (with respect to the standard Borel) coweight $\lambda$ of the standard maximal torus $T\subset\Sl_{n}$ we associate the lattice scheme $V(\lambda)\subset \Aff_{k}^{nN}$ defined by
$$
I(\lambda)=\langle x_{1,0},\dotsc,x_{1,\lambda_{1}-\lambda_{n}},\dotsc,x_{n-1,0},\dotsc,x_{n-1,\lambda_{n-1}-\lambda_{n}} \rangle.
$$
We think of $V(\lambda)$ as the `standard lattice' with elementary divisors $z^{\lambda_{1}},\dotsc,z^{\lambda_{n}}$. Let us denote by $\mathcal{D}(\lambda)$ its orbit closure in the multigraded Hilbert scheme.

Furthermore, for any such $\lambda\in \domcochar(T)\simeq \mathbb{Z}^{n}$ we define a sequence $\mu_{1},\dotsc,\mu_{N}\in \mathbb{Z}^{n}$ by setting
$\mu_{j} := (1,\dotsc,1,0,\dotsc,0)$
such that the number of 1's equals the number of indices $1\leq i\leq d$ with $\lambda_{i}-\lambda_{n}\geq j$. In particular, we have $\sum_{j=1}^{N}\mu_{i} = \lambda - \lambda_{n}(1,\dotsc,1)$. This sequence of $\mu_{j}$'s defines a sequence of minuscule dominant coweights of the standard maximal torus in $\mathbb{P}\Gl_{n}$ (with respect to the standard Borel), and hence determines a Demazure resolution $\pi(\lambda): \Sigma(\mu_{1},\dotsc,\mu_{N})\to \mathcal{S}(\lambda)$ of the Schubert variety $\mathcal{S}(\lambda)\subset \mathcal{G}$ associated with $\lambda$.

The varieties $\mathcal{D}(\lambda)$ and $\Sigma(\mu_{1},\dotsc,\mu_{N})$ are related according to the following

\begin{thm}\label{thmIntro}
The $k$-variety $\mathcal{D}(\lambda)$ is an iterated bundle of ordinary Grassmannians, and there is a universal homeomorphism $$\sigma: \mathcal{D}(\lambda)\to \Sigma(\mu_{1},\dotsc,\mu_{N}).$$ Furthermore, let
$\pi'(\lambda): \mathcal{D}(\lambda)\to \Sigma(\mu_{1},\dotsc,\mu_{N}) \to \mathcal{S}(\lambda)$
denote the composition of $\sigma$ with the Demazure resolution $\pi(\lambda)$ of $\mathcal{S}(\lambda)$. Then a lattice scheme given by a $k$-valued point in $\mathcal{D}(\lambda)$ is reduced if and only if it maps to the big cell of $\mathcal{S}(\lambda)$ under $\pi'(\lambda)$.
\end{thm}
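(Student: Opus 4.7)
The plan is to build $\mathcal{D}(\lambda)$ step by step in a way that mirrors the Demazure tower $\Sigma(\mu_{1},\dotsc,\mu_{N})$, and then read the morphism $\sigma$ directly off this construction. The key is to exploit the grading $\deg x_{i,j}=p^{j}$ together with the very special shape of the multiplication in $k[[z]]^{F}$: an element of degree $p^{j-1}$ multiplied by $z$ lands in degree $p^{j}$ and, crucially, couples to lower degrees only through $p$-power Frobenius. An admissible ideal $I\subset k[x_{i,j}]$ with the Hilbert function of $I(\lambda)$ is therefore determined by its homogeneous pieces $I_{p^{j-1}}$, and $I_{1}$ alone cuts out a point in an ordinary Grassmannian whose type matches $\mu_{1}$ (i.e.\ the first minuscule quotient $\mathcal{L}_{0}/\mathcal{L}_{1}$). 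Having fixed it, the possible $I_{p}$ form a Grassmannian of type $\mu_{2}$ fibered over the first, and iterating yields the iterated-Grassmannian-bundle description of $\mathcal{D}(\lambda)$.

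Next I would define $\sigma$ on $R$-valued points by extracting from a lattice scheme $V$ the sequence $\mathcal{L}_{0}\supset \mathcal{L}_{1}\supset\dotsb\supset \mathcal{L}_{N}$ of honest lattices in $R((z))^{n}$ obtained by discarding, at each stage, the generators of degree $\geq p^{j}$; in the tower picture this is simply the projection to the first $j$ factors. The stratification identifies cells of the source with cells of the target, so $\sigma$ is bijective on geometric points. Upgrading this to a universal homeomorphism reduces to showing that $\sigma$ is finite, surjective, and radicial; the radicial behaviour is exactly where the Frobenius-twisted multiplication enters, since the $p$-power coupling between successive degrees introduces relations on $\O_{\mathcal{D}(\lambda)}$ that disappear after passing to the reduced subscheme.

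Finally, for the reducedness statement, the big cell of $\mathcal{S}(\lambda)$ corresponds under $\pi(\lambda)$ to those sequences in $\Sigma(\mu_{1},\dotsc,\mu_{N})$ whose successive minuscule positions are all generic; on the Hilbert-scheme side this forces the ideal $I$ to be generated in degree $1$, so that $V$ is an honest linear subspace and hence reduced. Conversely, over any boundary stratum at least one of the minuscule positions degenerates, and I expect to show that this degeneration forces a nontrivial Frobenius-twisted generator to appear in some degree $p^{j}$; such a generator produces $p$-th power relations in $\O_{V}$, hence nilpotents. The main technical obstacle will be the bookkeeping needed to pass cleanly from the degree-by-degree analysis on the multigraded Hilbert scheme to the step-by-step Demazure tower, and in particular to verify that $\sigma$ is universally, not merely pointwise, a homeomorphism; this will require a careful description of the universal families on both sides and a precise tracking of how the Frobenius twist interacts with base change to an arbitrary $k$-algebra $R$.
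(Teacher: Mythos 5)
Your proposal has the right global strategy (degree-by-degree analysis keyed to the grading $\deg x_{i,j}=p^{j}$, an iterated Grassmannian tower, a Frobenius-twisted comparison map, reducedness tied to the big cell), and it does parallel the paper's line of argument at a coarse level, but there are gaps that are substantive rather than bookkeeping.

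First, the assertion that ``an admissible ideal $I\subset k[x_{i,j}]$ with the Hilbert function of $I(\lambda)$ is therefore determined by its homogeneous pieces $I_{p^{j-1}}$'' is false for general admissible ideals and is the crux of the matter. Admissibility plus the right Hilbert function is not enough: the paper exhibits, at the end of Section~6, an admissible ideal (for $n=4$, $\lambda=(1,1,-1,-1)$: $I=\langle x_{0},y_{0},z_{0}^{p},z_{1}\rangle$) with the same Hilbert function as $I(\lambda)$ that does not lie on the tower, and also shows at a tangent-space level that the full multigraded Hilbert scheme $\mathcal{M}^{(N),h}_{n}$ is strictly larger than $\mathcal{D}(\lambda)$. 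What makes the tower description work is the notion of \emph{twisted-linear} ideals together with the closure condition $z^{\#}I^{\leq p^{i}}\subset I^{\leq p^{i-1}}$; those are the definitions of the intermediate functors $\mathcal{T}_{m}(\lambda)$ in the paper. Moreover, the step from $\mathcal{T}_{m}$ to $\mathcal{T}_{m-1}$ by ``discarding the degree-$\geq p^{m-1}$ part'' is a morphism of schemes only because twisted-linearity guarantees that $\fpv{m-1}{R}\cap I$ is an $R$-direct summand and that forming it commutes with arbitrary base change (this is the content of Lemma~\ref{lemIntersections} and Corollaries~\ref{corIntersections}, \ref{corFlatness}). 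Your plan never isolates this class of ideals nor proves these flatness and base-change statements; without them you have a map of $k$-valued points but not a map of functors, and the ``careful tracking ... with base change'' you defer to is precisely the hard part, not a cleanup step.

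Second, $\mathcal{D}(\lambda)$ is \emph{defined} as the orbit closure of $I(\lambda)$ in the multigraded Hilbert scheme, not as the iterated tower. You need two further inputs: that twisted-linearity is a closed condition on the Hilbert scheme (so the ideal sheaf of the orbit closure is twisted-linear, Prop.~\ref{propTLClosed}/Cor.~\ref{corClosure}), which gives a closed immersion $\mathcal{D}(\lambda)\hookrightarrow\mathcal{T}_{N}(\lambda)$; and then an equality of dimensions, which in the paper is obtained precisely by constructing the (Frobenius-twisted) map to the lattice-chain Demazure variety and using equivariance and density (Theorem~\ref{thmRelFunctors} and Cor.~\ref{corMain}). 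Your proposal takes the tower description of $\mathcal{D}(\lambda)$ as given, but it needs to be proven, and the comparison with the Demazure variety is an ingredient of that proof, not a consequence of it.

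Third, the reducedness characterization is not right as stated. The ideal $I(\lambda)$ itself has generators in degrees $1,p,\dotsc,p^{\tilde\lambda_{1}-1}$, so ``generated in degree~$1$'' cannot be the criterion; the standard lattice scheme would otherwise fail it. The correct direction is: on the orbit $\mathcal{O}(\lambda)$ (which maps to the big cell) the generators form part of a coordinate system, hence $V$ is reduced; over the boundary the reduced structure of $V$ would belong to some $\mathcal{O}(\lambda')$ with $\lambda'<\lambda$ and would therefore have a strictly different Hilbert function, contradicting constancy of the Hilbert function on $\mathcal{D}(\lambda)$ (Cor.~\ref{corInfStruct}). Your sketch (``degeneration forces a nontrivial Frobenius-twisted generator ... hence nilpotents'') is the right intuition but would need this Hilbert-function argument, or something equivalent, to close.

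Finally, the map $\sigma$ is not obtained merely by truncation; the degree-$p^{l-1}$ piece $\fpv{l-1}{R}\cap I$ must be pushed forward by the relative Frobenius $\Frob_{R}^{N-l}$ and then dualized before it becomes a member of a descending lattice chain with $z\mathcal{L}_{l-1}\subset\mathcal{L}_{l}\subset\mathcal{L}_{l-1}$ (the containments one gets directly are of the form $\mathcal{L}_{l}\subset F^{*}\mathcal{L}_{l-1}$). This twist is exactly what makes the comparison square in Theorem~\ref{thmRelFunctors} commute, and it is also what forces $\sigma$ to be only a universal homeomorphism rather than an isomorphism, since the square with $\Phi$ fails to be cartesian. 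You gesture at this but should write it out, since without it the image of $\sigma$ need not even land in the Demazure variety.
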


Put otherwise: the fiber of $\mathcal{D}(\lambda)$ over a $k$-valued point $P$ in $\mathcal{S}(\lambda)$ has positive dimension if and only if $P$ lies in the boundary of the big cell. The points in this fiber correspond to different infinitesimal structures on the lattice corresponding to $P$.
\medskip

There is an easy way to conceptually understand the ring of Frobenius-twisted power series: For a ring $R$ of characteristic $p>0$, identify $\W(R)$, the ring of Witt vectors over $R$, and $R^{\mathbb{N}}$ as sets and let $I_{n}\subset \W(R)$ be the ideal with underlying set $\lbrace 0 \rbrace^{n}\times R^{\mathbb{N}} \subset R^{\mathbb{N}}$. The ideals $I_{n}$ define a filtration $\mathcal{I}: \W(R)=I_{0}\supset I_{1}\supset I_{2}\supset \dotsb$ whose associated graded ring gives, after completion with respect to the topology defined by $\mathcal{I}$, the ring of Frobenius-twisted power series $R[[z]]^{F}$.
In fact, the study of spaces of lattices over $\W(k)$ was the original motivation to consider lattices over $k[[z]]^{F}$: Also in the `Witt vector case' one can carry out the construction of a `standard lattice scheme' $V(\lambda)$ with given elementary divisors and its orbit closure $\mathcal{D}_{\W}(\lambda)$ under $\Sl_{n}(\W(k))$ in the multigraded Hilbert scheme. This orbit closure could be viewed as an analogon of Schubert varieties in the Witt vector case. It turns out that also here the phenomenon of infinitesimal structures on lattice schemes arises, and that there are in general many different lattice schemes in the orbit closure of $V(\lambda)$ giving rise to the same reduced structure. In the special case $n=2,\lambda=(1,-1)$ and $p\geq 3$ simple calculations show that the varieties $\mathcal{D}(\lambda)$ and $\mathcal{D}_{\W}(\lambda)$ are equal (we calculate $\mathcal{D}(\lambda)$ in section \ref{sectionIsomorphismTheorem} - the calculation in the Witt vector case is completely parallel). Unfortunately, the general results in the case of Frobenius-twisted power series cannot be carried over directly to the Witt vector case, since in the latter case the ideals which define lattice schemes fail to be `twisted-linear', which is essential for our results (see below). We will come back to the case of Witt vectors in a subsequent paper.

We mention that there is Haboush's paper \cite{haboush} in which he studies spaces of lattices over the ring of Witt vectors - however, in this paper he does not account for the phenomenon of infinitesimal structures.
\smallskip

We will start this paper by recalling in section \ref{sectionDemazureVarieties} the notions of affine Grassmannian, Schubert varieties and their Demazure resolutions. The main purpose of this section is to clarify the connection between the most common description of a Demazure variety in terms of quotients of products of parahorics, and an equivalent description as a variety of lattice chains. In this paper we will work with the latter description.
In section \ref{sectionPowerSeriesRings} we introduce the ring of Frobenius twisted power series. This ring structure gives rise to a structure of ring scheme on the affine space $\Aff_{k}^{N}$, and to a structure of ($n$-dimensional) $\Aff_{k}^{N}$-module scheme on $\Aff_{k}^{nN}$. Flat subschemes of $\Aff_{k}^{nN}$, which are stable with respect to the addition- and scalar multiplication-morphisms, will be called lattice schemes, and are parametrized by a closed subscheme $\mathcal{M}$ of the multigraded Hilbert scheme. This is shown in section \ref{sectionLatticeSchemes}.
The technical heart of the paper is section \ref{sectionTechnical}. Here we introduce the notion of `twisted-linear` ideals in a polynomial ring of positive characteristic, and prove functoriality and flatness results for twisted linear ideals. Moreover we prove that twisted linearity is a `closed condition` on the multigraded Hilbert scheme.
Finally, the last section is devoted to the construction of an isomorphism from $\mathcal{D}(\lambda)$ to an iterated bundle of ordinary Grassmannians (theorem \ref{thmGrassmannBundle} and corollary \ref{corMain}) and to the construction of the universal homeomorphism $\sigma: \mathcal{D}(\lambda)\to \Sigma(\mu_{1},\dotsc,\mu_{N})$ (theorem \ref{thmRelFunctors}). As an easy corollary (corollary \ref{corInfStruct}) we obtain that a lattice scheme $V\in\mathcal{D}(\lambda)(k)$ maps to the big cell of $\mathcal{S}(\lambda)$ if and only if $V$ is reduced, which proves the last assertion of theorem \ref{thmIntro}.
Also we illustrate these results by explicitly calculating the respective objects and morphisms for the `smallest` non-trivial example, given by $n=2,N=2, \lambda=(1,-1)$.
\smallskip

\textbf{Acknowledgement.}
I am very grateful to Ulrich G\"ortz for his encouragement and constant interest in this work and for pointing out serious errors in a preliminary version as well as for making a number of other valuable suggestions on this paper. Also I want to thank Michael Rapoport and his group for helpful discussions and the opportunity to work in such a stimulating environment. This work was partially supported by the SFB/TR 45 `Periods, Moduli Spaces and Arithmetic of Algebraic Varieties' of the DFG (German Research Foundation).


\section{The affine Grassmannian and Demazure varieties}\label{sectionDemazureVarieties}

In this section we describe the construction of the affine Grassmannian associated to the special linear group $\Sl_{n}$, as well as the Demazure-Hansen-Bott-Samelson desingularization of its Schubert cells. For both objects we recall the `standard`-definition in terms of quotients of loop groups, and give (also well known) equivalent descriptions as a variety of lattices, and a variety of lattice chains, respectively. These latter descriptions are those which we are going to use in the rest of the paper.

Throughout this section, we denote by $k$ an arbitrary algebraically closed field, and write $\O = k[[z]]$ for the ring of power series in one variable, and $\K=k((z))$ for its quotient field.

\subsection{The affine Grassmannian for $\Sl_{n}$}\label{subsectionGrassmannian} The material of this section can be found in more detail in the article \cite{beauville-laszlo} of Beauville and Laszlo.

Let $G=\Sl_{n}$, let $T\subset B \subset G$ be the standard maximal torus (of diagonal matrices) and the standard Borel subgroup (of upper triangular matrices) of $G$. Moreover, denote by $\cochar(T)$ the group of cocharacters of $T$, and by $\domcochar(T)$ the subset of dominant cocharacters (respective to the fixed Borel $B$).

Consider the following (set)-valued functors on the category of $k$-algebras:
\begin{align}
\Loop^{\geq 0}G: R \mapsto G(R[[z]]) \text{ (the `positive loop group` associated to $G$)},\\
\Loop G: R\mapsto G(R((z))) \text{ (the `loop group` associated to $G$)}.
\end{align}
It is easy to see that the positive loop group is represented by an (infinite dimensional) affine $k$-scheme, while the loop group is represented by an ind-scheme over $k$ (i.e. an inductive limit of $k$-schemes in the category of $k$-spaces). In both cases, we will not further distinguish between the (ind-)schemes and the functors which they represent.

We identify $\domcochar(T)$ with a subset of $\mathbb{Z}^{n}$ and hence obtain the injection
\begin{equation}\label{eqnEvalMap}
\cochar(T) \to \Loop G; \lambda \mapsto z^{\lambda} = \diag(z^{\lambda_{1}},\dotsc,z^{\lambda_{n}}).
\end{equation}
Thus we can regard the group of cocharacters of $T$ with a subset of $\Loop G(R)$ for any $R$.

\begin{defn}\label{defAffineGrassmannian}
The quotient $\Loop G/\Loop^{\geq 0}G$ in the category of $k$-spaces, associated to the functor
\begin{equation}
 R\mapsto G(R((z)))/G(R[[z]])
\end{equation}
is called the \emph{affine Grassmannian} (associated to $G$). In the sequel we denote the affine Grassmannian by $\mathcal{G}$.
\end{defn}

On the $k$-space $\G$ we have a natural action of $\Loop^{\geq 0}G$ by multiplication on the left. On the level of $k$-valued points, this action induces a double coset decomposition for $\Loop G(k)$, namely
\begin{equation}\label{eqnDoubleCosetDec}
\Loop G(k) = \displaystyle\cup_{\lambda\in\domcochar(T)}\Loop^{\geq 0}G(k)z^{\lambda}\Loop^{\geq 0}G(k).
\end{equation}
This shows that the $\Loop^{\geq 0}G(k)$-orbits in $\G(k)$ are parametrized by the dominant cocharacters of $T$. These orbits are called the \emph{Schubert cells} of $\G$, and denoted in the sequel by $\mathcal{C}(\lambda)$ for $\lambda\in\domcochar(T)$.

It is known that $\G$ is in fact represented by an ind-scheme. Before we state the precise result, let us recall the notion of lattice, which is essential for the proof (and the rest of this paper).

\begin{defn}\label{defLattice}
An $R[[z]]$-submodule $\mathcal{L}\subset R((z))^{n}$ is called a \emph{lattice}, if
\begin{enumerate}
\item there exists $N\in \mathbb{N}$, such that $z^{N}R[[z]]^{n}\subset \mathcal{L}\subset z^{-N}R[[z]]$, and
\item the quotient $z^{-N}R[[z]]/\mathcal{L}$ is a projective $R$-module.
\end{enumerate}
The lattice $\mathcal{L}$ is called \emph{special}, if $\wedge^{n}\mathcal{L} = R[[z]]$, i.e. $\mathcal{L}$ has fpqc-locally a basis with determinant 1.
\end{defn}

\begin{thm}[Beauville, Laszlo, \cite{beauville-laszlo}]\label{thmBeauvilleLaszlo}
The affine Grassmannian $\G$ is isomorphic to the functor, which associates to every $k$-algebra $R$ the set of special lattices in $R((z))^{n}$. Moreover, it is represented by an inductive limit of closed subschemes of usual Grassmannians, i.e. it is an ind-scheme and even ind-projective.
\end{thm}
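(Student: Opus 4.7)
The plan is to prove the two assertions separately: first, the isomorphism of functors between the fpqc-sheafification of $R\mapsto \Sl_n(R((z)))/\Sl_n(R[[z]])$ and the functor $\Lambda$ of special lattices, and then the representability of $\Lambda$ as an ind-projective $k$-ind-scheme. I would construct a natural transformation $\Phi\colon \Loop G/\Loop^{\geq 0}G\to\Lambda$ by sending the class of $g\in\Sl_n(R((z)))$ to the $R[[z]]$-submodule $g\cdot R[[z]]^n\subset R((z))^n$. This is well-defined on cosets because $\Sl_n(R[[z]])$ stabilizes $R[[z]]^n$, lands in $\Lambda$ because $\det g=1$ forces $\wedge^{n}(g\cdot R[[z]]^n)=R[[z]]$, and is injective on $R$-points since two representatives giving the same lattice differ by a matrix preserving $R[[z]]^n$, hence lying in $\Sl_n(R[[z]])$.

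The genuinely non-trivial step is fpqc-local surjectivity of $\Phi$, which is the main obstacle. Given $\mathcal{L}\in\Lambda(R)$, one uses the inclusions $z^N R[[z]]^n\subset\mathcal{L}\subset z^{-N}R[[z]]^n$ to see that $\mathcal{L}$ is a finitely presented $R[[z]]$-module; combined with the projectivity of $z^{-N}R[[z]]^n/\mathcal{L}$ over $R$ and $z$-adic completeness, one deduces that $\mathcal{L}$ is a projective $R[[z]]$-module of constant rank $n$. By faithfully flat descent such a module is fpqc-locally free; the specialness condition $\wedge^n\mathcal{L}=R[[z]]$ then allows one to adjust a local basis so that the transition matrix to the standard basis of $R((z))^n$ has determinant $1$, producing the required element of $\Sl_n(R((z)))$.

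For representability, I would exhaust $\G$ by the subfunctors
$$\G^{(N)}(R)=\set{\mathcal{L}\in\Lambda(R)\suchThat z^N R[[z]]^n\subset\mathcal{L}\subset z^{-N}R[[z]]^n},$$
and send such an $\mathcal{L}$ to its image $\mathcal{L}/z^N R[[z]]^n$ in the finite free $R$-module $z^{-N}R[[z]]^n/z^N R[[z]]^n\simeq R^{2nN}$. This realizes $\G^{(N)}$ as a subfunctor of $\Grass_{nN,2nN}$, and both defining conditions of a special lattice translate into closed conditions on the Grassmannian: stability under the multiplication-by-$z$ endomorphism cuts out a closed subscheme (given by the vanishing of the composition $W\hookrightarrow R^{2nN}\xrightarrow{z}R^{2nN}\twoheadrightarrow R^{2nN}/W$ on the tautological subbundle $W$), and triviality of the top exterior power gives a further closed condition via the determinant line bundle. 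Thus $\G^{(N)}$ is a closed subscheme of the projective scheme $\Grass_{nN,2nN}$, and $\G=\varinjlim_N\G^{(N)}$ is ind-projective. The hardest technical input is the fpqc-local trivialization of a special lattice; everything else reduces to routine descent and closed-condition checks.
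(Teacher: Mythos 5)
Your proposal is correct and takes the same route as the paper, which only sketches the representability argument (exhaustion by the $\G^{(N)}$, each realized as a closed subscheme of $\Grass_{nN,2nN}$) and refers to Beauville--Laszlo for the details. Your additional discussion of the natural transformation $\Phi$ and its fpqc-local surjectivity via projectivity of $\mathcal{L}$ as an $R[[z]]$-module matches the argument in the cited source.
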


The idea of the proof is to write $\G = \cup_{N\in\mathbb{N}}\G^{(N)}$, where $$\G^{(N)}(R)= \lbrace \text{ special lattices }\mathcal{L}\text{ such that } z^{N}R[[z]]^{n}\subset \mathcal{L}\subset z^{-N}R[[z]]^{n}\text{ }\rbrace.$$ The latter are shown to be closed subschemes of usual Grassmannians.

It is easy to see that $\Loop^{\geq 0}G$ acts algebraically on every $\G^{(N)}$, whence the Schubert cells are quasi-projective $k$-schemes, each lying in a suitable $\G^{(N)}$. We write $\mathcal{S}(\lambda)$ for the closure of a Schubert cell $\mathcal{C}(\lambda)$ and call it a \emph{Schubert variety} in $\G$.

\begin{rem}
For $G=\Sl_{n}$ we have defined the affine Grassmannian as an fpqc-quotient $\Loop G/\Loop^{\geq 0}G$. However, the same definition applies to other reductive groups $G$; let us consider the case $G=\Gl_{n}$. We denote by $\mathcal{G}_{\Gl_{n}}$ the \emph{affine Grassmannian for $\Gl_{n}$}, i.e. the quotient $\Loop \Gl_{n}/\Loop^{\geq 0}\Gl_{n}$ in the category of $k$-spaces, associated to the functor $R\mapsto \Gl_{n}(R((z)))/\Gl_{n}(R[[z]])$. Then, in the same way as for $\Sl_{n}$, we obtain (\cite{beauville-laszlo}): The affine Grassmannian $\mathcal{G}_{\Gl_{n}}$ is isomorphic to the functor, which associates to every $k$-algebra $R$ the set of lattices in $R((z))^{n}$. Moreover, $\mathcal{G}_{\Gl_{n}}$ is an ind-scheme over $k$.
\end{rem}

\subsection{Iwahori and parahoric subgroups}

We return to our notation $G=\Sl_{n}$. Let $\epsilon: G(\O)=G(k[[z]])\to G(k)$ be the map induced by $z\mapsto 0$. The \emph{standard Iwahori subgroup} $\mathcal{I} \subset G(\O)$ is by definition the preimage of the standard Borel subgroup $B\subset G$. Any subgroup of $G(\K)$ containing $\mathcal{I}$ is called a \emph{standard parahoric subgroup} of $G(\K)$. In general, an \emph{Iwahori-(resp. parahoric) subgroup} of $G(\K)$ is a $G(\K)$-conjugate of $\mathcal{I}$ (resp. a standard parahoric subgroup). Note that Iwahori- (resp. parahoric) subgroups are exactly the stabilizers of complete (resp. partial) lattice chains
$$
z\mathcal{L}\subset\mathcal{L}_{1}\subset\dotsb\subset\mathcal{L}_{l}\subset\mathcal{L},\quad l\leq n-1,
$$
of lattices. In particular, the standard Iwahori subgroup is the stabilizer of the \emph{standard lattice chain}
$$
zk[[z]]^{n}\subset k[[z]]\oplus zk[[z]]^{n-1}\subset\dotsb\subset k[[z]]^{n-1}\oplus zk[[z]]\subset k[[z]]^{n},
$$
while the standard parahoric subgroups are the stabilizers of its respective subflags. A \emph{maximal} standard parahoric subgroup is the stabilizer of a single lattice in the standard lattice chain.

Let $\coweights_{+}$ be the set of dominant coweights of $\mathbb{P}\Gl_{n}$ w.r.t. the standard maximal torus and the standard Borel of upper trinangular matrices, and let $\coweights_{+,\min}\subset \coweights_{+}$ be the subset of minuscule dominant coweights. (A system of representatives of $\coweights_{+,\min}$ is given by the vectors of the form $(1,\dotsc,1,0,\dotsc,0)$, with the number of 1's running from $0$ to $d-1$.) In the sequel, if we speak of minuscule dominant coweights, we will always be referring to elements of $\coweights_{+,\min}$. By identifying $\coweights_{+} = \mathbb{Z}^{n}/(1,\dotsc,1)\mathbb{Z}$ we can write any $\lambda \in \coweights_{+}$ as $\lambda=(\lambda_{1},\dotsc,\lambda_{n})$ and note that the number $\lvert \lambda \rvert = \sum \lambda_{i}$ is well defined modulo $n$.
We note the bijection
\begin{equation}\label{eqnConjCoweightParahoric}
\begin{split}
\coweights_{+,\min} &\to \lbrace \text{ maximal standard parahoric subgroups }\rbrace\\
\mu &\mapsto P_{\mu}:=z^{-\tilde{\mu}}G(\O)z^{\tilde{\mu}},
\end{split}
\end{equation}
where $\tilde{\mu}\in \mathbb{Z}^{n}$ is a representative of $\mu$. To abbreviate notation in the following section, we set $P_{\mu_{1},\mu_{2}} := P_{\mu_{1}} \cap P_{\mu_{2}}$, the stabilizer of the two respective lattices in the standard flag, for $\mu_{i}$ minuscule dominant coweights.

\subsection{Demazure-Hansen-Bott-Samelson desingularization of Schubert varieties}\label{subsectionDemazure}

The presentation of the material in this section follows roughly the exposition by Gaussent and Littelmann in \cite{gaussent-2003}, and complements it with an alternative description of Demazure-Hansen-Bott-Samelson varieties in terms of lattice chains. The results are essentially due to Contou-Carr\`ere \cite{contoucarrere-1983}. Note that instead of `Demazure-Hansen-Bott-Samelson variety` we will often simply speak of `Demazure variety`. This is common in the literature.

\begin{defn}[Demazure variety]\label{defBottSamelson}
Let $\nu_{1},\dotsc,\nu_{m}$ be a sequence of minuscule dominant coweights. The \emph{Demazure-Hansen-Bott-Samelson variety} $\Sigma(\nu_{1},\dotsc,\nu_{m})$ is defined as
\begin{equation*}
\Sigma(\nu_{1},\dotsc,\nu_{m}) = P_{0} \times^{P_{0, \nu_{1}}} P_{\nu_{1}} \times^{P_{\nu_{1}, \nu_{2}}} \dotsb \times^{P_{\nu_{m-1}, \nu_{m}}} P_{\nu_{m}}/P_{\nu_{m},0},
\end{equation*}
i.e. the variety $P_{0} \times P_{\nu_{1}} \times \dotsb \times P_{\nu_{m}}$ modulo the right-action of the subgroup $P_{0, \nu_{1}} \times \dotsb \times P_{\nu_{m-1}, \nu_{m}} \times P_{\nu_{m},0}$ given by
$$
(g_{0},\dotsc,g_{m})\cdot (q_{0},\dotsc,q_{m}) = (g_{0}q_{0},q_{0}^{-1}g_{1}q_{1},\dotsc,q_{m-1}^{-1}g_{m}q_{m}).
$$
\end{defn}

There is a natural morphism
\begin{equation}\label{eqnResolution}
\begin{split}
\pi: \Sigma(\nu_{1},\dotsc,\nu_{m}) &\to \G\\ [g_{0},\dotsc,g_{m}] &\mapsto g_{0}\dotsb g_{m}G(\O)/G(\O),
\end{split}
\end{equation}
whose image is a closed $G$-invariant subvariety of $\G$.

Let us remark here that in \cite{gaussent-2003} Gaussent and Littelmann define a more general notion of Demazure variety in terms of sequences of `types` (admitting not only maximal, but any standard parahoric subgroups in the above definition). However, in our situation, to give a sequence of types in the sense of \cite{gaussent-2003} is the same as to give a sequence of minuscule dominant coweights.

We are now going to describe Demazure-Hansen-Bott-Samelson varieties in terms of (descending) lattice chains: With any sequence $\mu_{1},\dotsc,\mu_{m+1}\in \lbrace 0,1\rbrace^{n}$, such that every $\mu_{i}$ is of the form $(1,\dotsc,1,0,\dotsc,0)$ (i.e. represents a minuscule dominant coweight) and $\sum\mu_{i} \equiv 0 \mod n$, we associate a variety of lattice chains $\tilde{\Sigma}(\mu_{1},\dotsc,\mu_{m+1})\subset \prod_{j=1}^{m+1}\mathcal{G}_{\Gl_{n}}$, defined on the level of $k$-valued points by
\begin{multline}\label{eqnLatticeChains}
\tilde{\Sigma}(\mu_{1},\dotsc,\mu_{m+1})(k) = \lbrace (\mathcal{L}_{0}=k[[z]]^{n},\mathcal{L}_{1}, \dotsc, \mathcal{L}_{m+1}) \suchThat \inv(\mathcal{L}_{i},\mathcal{L}_{i+1}) = \mu_{i+1} \rbrace.
\end{multline}
Here, by $\inv(\mathcal{L}_{i},\mathcal{L}_{i+1})$ we denote the vector of elementary divisors of $\mathcal{L}_{i+1}$ relative to $\mathcal{L}_{i}$, ordered by decreasing size. Note that $\tilde{\Sigma}(\mu_{1},\dotsc,\mu_{m+1})(k)$ is a subset of the set of $k$-valued points of a product of affine Grassmannians for $\Gl_{n}$. Thus we obtain a well-defined reduced scheme structure on $\tilde{\Sigma}(\mu_{1},\dotsc,\mu_{m+1})$.

\begin{rem}\label{remBuilding}
This definition implies that for any $i$ we have $z\mathcal{L}_{i}\subset \mathcal{L}_{i+1}\subset \mathcal{L}_{i}$. Thus the points of $\tilde{\Sigma}(\mu_{1},\dotsc,\mu_{m+1})$ correspond to sequences of vertices in the affine building of $\Sl_{n}$, where two subsequent vertices are joint by a 1-dimensional face in the building.
\end{rem}

As for $\Sigma(\nu_{1},\dotsc,\nu_{m})$, there is a morphism
\begin{equation}\label{eqnAltResolution}
\begin{split}
\tilde{\pi}: \tilde{\Sigma}(\mu_{1},\dotsc,\mu_{m+1}) &\to \G\\
    (\mathcal{L}_{0}, \dotsc, \mathcal{L}_{m+1}) &\mapsto \mathcal{L}_{m+1}z^{-\sum\mu_{i}}.
\end{split}
\end{equation}
(One has to check that $\mathcal{L}_{m+1}z^{-\sum\mu_{i}}$ is special, but this is easy, as it is sufficient to consider only $k$-valued points.)

Now, for each $i$, let $\nu_{i}$ be as in the definition of Demazure variety, and assume $\mu_{i}$ chosen to be a representative of the unique minuscule dominant coweight lying in the $W$-orbit of $\nu_{i-1}-\nu_{i}$ ($W=\operatorname{S}_{n}$ the Weyl group of $\mathbb{P}\Gl_{n}$) (Of course, this can be reversed: to any sequence of $\mu_{i}$'s we can find the corresponding $\nu_{i}$'s). Moreover, set $\nu_{m+1}=0$, and for every $i=1,\dotsc,m+1$ choose a representative $\tilde{\nu}_{i}$ of $\nu_{i}$ such that $\tilde{\nu}_{i-1}-\tilde{\nu}_{i}\in \lbrace 0,1\rbrace^{n}$. Then we have

\begin{prop}\label{propLatticeChains}
Let $[g_{0},\dotsc,g_{m}]\in\Sigma(\nu_{1},\dotsc,\nu_{m})$. The assignment 
\begin{equation}\label{eqnLattice}
 \mathcal{L}_{i}=g_{0}\dotsb g_{i-1}z^{-\tilde{\nu}_{i}}\mathcal{L}_{0}, \quad i=1,\dotsc,m+1,
\end{equation}
defines an isomorphism of varieties $$\varphi: \Sigma(\nu_{1},\dotsc,\nu_{m}) \xrightarrow{\simeq} \tilde{\Sigma}(\mu_{1},\dotsc,\mu_{m+1}).$$ Furthermore, we have $\varphi\circ\tilde{\pi} = \pi$, and the morphisms \eqref{eqnResolution} and \eqref{eqnAltResolution} are desingularitizations of the Schubert variety $\mathcal{S}(\sum_{i=1}^{m+1}\mu_{i} \mod n)$.
\end{prop}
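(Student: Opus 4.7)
The plan is to verify, in succession, that formula~\eqref{eqnLattice} (i) descends through the gauge action of $P_{0,\nu_{1}}\times\dotsb\times P_{\nu_{m},0}$, (ii) lands in $\tilde{\Sigma}(\mu_{1},\dotsc,\mu_{m+1})$, (iii) admits an inductively constructed inverse, (iv) intertwines $\pi$ with $\tilde{\pi}$, and (v) realizes a desingularization of the Schubert variety $\mathcal{S}(\sum_{i=1}^{m+1}\mu_{i}\mod n)$.

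For (i), a telescoping computation shows that the right action of $(q_{0},\dotsc,q_{m})$ replaces $g_{0}\dotsb g_{i-1}$ by $g_{0}\dotsb g_{i-1}q_{i-1}$. Since $P_{\nu_{i}}=z^{-\tilde{\nu}_{i}}G(\O)z^{\tilde{\nu}_{i}}$ is by definition the $G(\K)$-stabilizer of $z^{-\tilde{\nu}_{i}}\O^{n}$ and $q_{i-1}\in P_{\nu_{i-1},\nu_{i}}\subset P_{\nu_{i}}$ (with the edge cases $q_{0}\in P_{0,\nu_{1}}\subset P_{\nu_{1}}$ and $q_{m}\in P_{\nu_{m},0}\subset G(\O)$), the lattice $\mathcal{L}_{i}$ is unchanged. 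For (ii), write $g_{i}=z^{-\tilde{\nu}_{i}}g_{i}'z^{\tilde{\nu}_{i}}$ with $g_{i}'\in G(\O)$; then the position of $\mathcal{L}_{i+1}$ relative to $\mathcal{L}_{i}$ coincides with that of $g_{i}'z^{\tilde{\nu}_{i}-\tilde{\nu}_{i+1}}\O^{n}$ relative to $\O^{n}$. Since $G(\O)$ acts trivially on elementary divisors, this position is the dominant Weyl orbit representative of $\tilde{\nu}_{i}-\tilde{\nu}_{i+1}\in\{0,1\}^{n}$, which is $\mu_{i+1}$ by construction of the $\tilde{\nu}_{i}$'s.

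For (iii), I would construct $\varphi^{-1}$ step by step: given $(\mathcal{L}_{0},\dotsc,\mathcal{L}_{m+1})\in\tilde{\Sigma}$ and $g_{0},\dotsc,g_{i-1}$ already chosen, the lattice $(g_{0}\dotsb g_{i-1})^{-1}\mathcal{L}_{i+1}$ has relative position $\mu_{i+1}$ to $z^{-\tilde{\nu}_{i}}\O^{n}$, so the Cartan decomposition yields $g_{i}\in P_{\nu_{i}}$ with $g_{i}z^{-\tilde{\nu}_{i+1}}\O^{n}=(g_{0}\dotsb g_{i-1})^{-1}\mathcal{L}_{i+1}$, unique up to right multiplication by $P_{\nu_{i},\nu_{i+1}}$. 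To upgrade this pointwise bijection to a scheme-theoretic isomorphism, identify both sides as iterated Grassmannian bundles: on the $\Sigma$-side the $i$-th fiber is $P_{\nu_{i-1}}/P_{\nu_{i-1},\nu_{i}}$, on the $\tilde{\Sigma}$-side it is the Grassmannian of sublattices at relative position $\mu_{i}$, and these fibers are canonically isomorphic smooth projective varieties since $\mu_{i}$ is minuscule. Assembling these bundle isomorphisms gives $\varphi^{-1}$ as a morphism of $k$-varieties.

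For (iv), taking $i=m+1$ and $\tilde{\nu}_{m+1}=0$ yields $\mathcal{L}_{m+1}=g_{0}\dotsb g_{m}\O^{n}$; the renormalization by $z^{-\sum\mu_{i}}$ (permissible modulo the central cocharacter by the assumption $\sum\mu_{i}\equiv 0\mod n$) transports this to the special lattice $g_{0}\dotsb g_{m}\O^{n}$, giving $\tilde{\pi}\circ\varphi=\pi$. Finally for (v): smoothness of $\Sigma$ follows from the iterated Grassmannian bundle description, properness of $\pi$ from projectivity of each fiber, and birationality onto $\mathcal{S}(\sum\mu_{i}\mod n)$ is the standard Demazure argument that a generic point of the open Schubert cell has a unique preimage. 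The main obstacle is step (iii): promoting the bijection of $k$-points to an isomorphism of $k$-schemes requires identifying both towers as \emph{the same} iterated Grassmannian bundle, which in turn requires an fpqc-local description of the parameter space of lattice pairs with fixed relative position, matched explicitly with the parahoric quotient on the other side.
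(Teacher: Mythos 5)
Your steps (i), (ii) and (iv) follow the paper's computation essentially verbatim: you descend through the gauge group by a telescoping argument, verify $\inv(\mathcal{L}_{i},\mathcal{L}_{i+1})=\mu_{i+1}$ by conjugating $g_{i}$ into $G(\O)$, and take $i=m+1$ to get $\tilde{\pi}\circ\varphi=\pi$. For (v) you supply more detail than the paper (which just invokes smoothness and projectivity of $\tilde{\Sigma}$ from Remark~\ref{remBuilding} and equality of dimensions), and your ``generic point has a unique preimage'' supplement is correct and standard.

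The genuine gap is the one you yourself flag in step (iii). You build $\varphi^{-1}$ inductively on $k$-valued points via the Cartan decomposition and then propose to globalize by identifying both sides as abstract iterated Grassmannian bundles and matching them fiberwise; but you stop short of actually matching the bundles, and indeed such a matching is not canonical without an extra ingredient. The paper closes precisely this gap by a single citation: the quotient $\Loop\Gl_{n}\to\Loop\Gl_{n}/\Loop^{\geq 0}\Gl_{n}$ is Zariski-locally trivial (Faltings), so for $R$ local one can lift each lattice $\mathcal{L}_{i+1}$ to a matrix $h_{i}\in\Loop\Gl_{n}(R)$ and run your inductive construction of the $g_{i}$'s over $R$; the resulting morphisms on a Zariski cover of $\tilde{\Sigma}$ then glue. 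Without invoking this local section (or an equivalent local uniformization of the lattice parameter space), your bundle-matching plan is circular: the fact that $\tilde{\Sigma}$ is a twisted product of ordinary Grassmannians, which you lean on, itself rests on the same local triviality. So the fix is not a new idea but rather to make the Faltings local-triviality statement the explicit pivot of step (iii).
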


\begin{proof}
Let $(g_{0},\dotsc,g_{m})$ be a representative of an $R$-valued point in $\Sigma(\nu_{1},\dotsc,\nu_{m})$ ($R$ any $k$-algebra) and let
$\mathcal{L}_{i}$ be as in the statement of the proposition, i.e. the lattice generated by the columns of the matrix $g_{0}\dotsb g_{i-1}z^{-\tilde{\nu}_{i}}$ for $i=1,\dotsc,m+1$.
We calculate
\begin{multline*}
\inv(\mathcal{L}_{i},\mathcal{L}_{i+1}) = \inv(\mathcal{L}_{0},z^{\tilde{\nu}_{i}}g_{i}z^{-\tilde{\nu}_{i+1}}\mathcal{L}_{0}) = \\ = \inv(\mathcal{L}_{0},hz^{\tilde{\nu}_{i}-\tilde{\nu}_{i+1}}\mathcal{L}_{0})\text{   (for some $h\in G(\O)$)  }= \mu_{i+1},
\end{multline*}
and thus \eqref{eqnLattice} gives indeed a point in $\tilde{\Sigma}(\mu_{1},\dotsc,\mu_{m+1})$. It is independent of the representative $(g_{0},\dotsc,g_{m})$, and indeed $\varphi\circ\tilde{\pi} = \pi$. We are left with the construction of an inverse map, which we will first explain on the level of $k$-valued points, i.e. lattices in $k((z))^{n}$.
Let $(\mathcal{L}_{0}, \dotsc, \mathcal{L}_{m+1}) \in \tilde{\Sigma}(\mu_{1},\dotsc,\mu_{m+1})$, and assume we have constructed $g_{0}\in P_{\nu_{0}},\dotsc,g_{i-1}\in P_{\nu_{i-1}}$ up to the right action of
$P_{0,\nu_{1}}\times\dotsb\times P_{\nu_{i-1},\nu_{i}}$, such that
$\mathcal{L}_{j}=g_{0}\dotsb g_{j-1}z^{-\tilde{\nu}_{j}}\mathcal{L}_{0}$ for $j\leq i$.
Let $\mathcal{L}_{i+1}=h_{i}\mathcal{L}_{0}$, and set $g_{i}= g_{i-1}^{-1}\dotsb g_{0}^{-1}h_{i} z^{\tilde{\nu}_{i+1}}$. Then the equation
$$
\inv(\mathcal{L}_{0},z^{\tilde{\nu}_{i}}g_{i}z^{-\tilde{\nu}_{i+1}}\mathcal{L}_{0}) = \inv(\mathcal{L}_{i},\mathcal{L}_{i+1}) = \mu_{i+1}
$$
shows that $g_{i}\in P_{\nu_{i}}P_{\nu_{i+1}}$. Moreover, for an appropriate choice of representative $h_{i}$ we even get $g_{i}\in P_{\nu_{i}}$, which determines $g_{i}$ up to right action by $P_{\nu_{i},\nu_{i+1}}$. Hence, by induction, we get a unique preimage for any sequence of lattices in $\tilde{\Sigma}(\mu_{1},\dotsc,\mu_{m+1})$. In order to obtain a true morphism, we have to describe the map on the level of $R$-valued points for any local $k$-algebra $R$. However, the quotient $\Loop \Gl_{n}\to \Loop \Gl_{n}/\Loop^{\geq 0}\Gl_{n}$ is locally trivial for the Zariski-topology (see e.g. Faltings \cite{faltings-2003}), which shows that the above construction can indeed be carried out for $R$-valued points, $R$ any local $k$-algebra. This proves the first claim.
Finally note that by remark \ref{remBuilding} $\tilde{\Sigma}(\mu_{1},\dotsc,\mu_{m+1})$ is a twisted product of ordinary Grassmannians and thus smooth and projective. Since the dimensions of source and target of the morphisms in question are equal and the involved schemes are projective over $k$, the second claim follows.
\end{proof}

Note that the dominant cocharacters of $\Sl_{n}$ inject into those of $\mathbb{P}\Gl_{n}$. Thus $\sum_{i=1}^{m+1}\mu_{i}$ determines uniquely a dominant cocharacter of $\Sl_{n}$ and hence the Schubert variety $\mathcal{S}(\sum_{i=1}^{m+1}\mu_{i} \mod n)$. Conversely, any dominant cocharacter of $\Sl_{n}$ can be decomposed into a sequence of minuscule dominant coweights, which determines sequences $\mu_{i}$ and $\nu_{i}$ as above.


For later use we state explicitly a well known formula for the dimension of $\Sigma(\nu_{1},\dotsc,\nu_{m})$ (see e.g. \cite{gaussent-2003}): Let $\rho$ be half the sum of the positive roots $\lbrace\alpha_{i,j}\rbrace$ of $Sl_{n}$, and let $\lambda$ be the dominant cocharacter of $\Sl_{n}$ given by $\sum_{i=1}^{m+1}\mu_{i} \mod n$. Then
\begin{equation}
 \dim \tilde{\Sigma}(\mu_{1},\dotsc,\mu_{m+1}) = \dim \Sigma(\nu_{1},\dotsc,\nu_{m}) = 2\langle \lambda,\rho \rangle.
\end{equation}
As a sketch of proof, we just note the following line of equations:
\begin{multline*}
 2\langle \lambda,\rho \rangle = \sum_{j>i}\langle \lambda,\alpha_{i,j}\rangle = \sum_{i=1}^{m+1}\lvert\mu_{i}\rvert(n-\lvert\mu_{i}\rvert) = \\ = \sum_{i=1}^{m+1}\dim \Grass_{\lvert\mu_{i}\rvert,n} = \dim \tilde{\Sigma}(\mu_{1},\dotsc,\mu_{m+1}),
\end{multline*}
where $\Grass_{\lvert\mu_{i}\rvert,n}$ denotes the ordinary (finite) Grassmannian of $\lvert\mu_{i}\rvert$-dimensional subspaces in $k^{n}$.


\section{Frobenius-twisted power series rings}\label{sectionPowerSeriesRings}

For this and the following sections let $k$ denote a perfect field of positive characteristic $p$, and let $R$ be a $k$-algebra.
We have seen in section \ref{sectionDemazureVarieties} that the $R$-valued points of the affine Grassmannian (for $\Sl_{n}$) are just the special lattices in $R((z))^{n}$. One can view lattices lying between $z^{N}R[[z]]$ and $z^{-N}R[[z]]$ (and this is indeed the key idea in the proof of theorem \ref{thmBeauvilleLaszlo}) as linear subspaces of $\Aff_{R}^{2nN}$, which are stable under the endomorphism $\Aff_{R}^{2nN}\to\Aff_{R}^{2nN}$ induced by multiplication by $z$ on $R((z))^{n}$.

We are now going to define a modified ring structure on the set $R[[z]]$ of power series in one indeterminate and study subvarieties of affine space over $R$, which are stable with respect to this new ring structure. Finally, we will show in section \ref{sectionLatticeSchemes} that these subvarieties are parametrized by a closed subscheme of the multigraded Hilbert scheme.


Let $\W(R)$ be the ring of Witt vectors over $R$ and identify it as a set with $R^{\mathbb{N}}$. On $\W(R)$ we consider the filtration $\mathcal{I}: \W(R)=I_{0}\supset I_{1}\supset I_{2}\supset \dotsb$, where, for every $n\in\mathbb{N}$, $I_{n}$ is the ideal with underlying set $\lbrace 0 \rbrace^{n}\times R^{\mathbb{N}} \subset R^{\mathbb{N}}$.

\begin{defn}
(1) The \emph{ring of Frobenius-twisted power series} over $R$ is the completion of the associated graded ring $\operatorname{gr}_{\mathcal{I}}\W(R)$ with respect to the filtration given by the ideals $\oplus_{i\geq N}I_{i}/I_{i+1}$. We denote this ring by $R[[z]]^{F}$.\\
(2) The \emph{ring of (truncated) Frobenius-twisted power series of length $N$} is the quotient of $\operatorname{gr}_{\mathcal{I}}\W(R)$ by the ideal $\oplus_{i\geq N}I_{i}/I_{i+1}$. We denote it by $R[[z]]^{F}_{N}$.
\end{defn}

Note that the ring $R[[z]]^{F}$ contains $R = \W(R)/I_{1}$ as a subring, and its underlying additive group is isomorphic to $R^{\mathbb{N}}$ with addition given componentwise. Furthermore, if we write an element $(a_{0},a_{1},a_{2},\dotsc) \in R[[z]]^{F}$ as $a_{0}+a_{1}z+a_{2}z^{2}+\dotsb$, then multiplication in $R[[z]]^{F}$ is as follows:

\begin{multline}\label{eqnMultTwisted}
(a_{0}+a_{1}z+a_{2}z^{2}+\dotsc) \cdot (b_{0}+b_{1}z+b_{2}z^{2}+\dotsc) =\\= a_{0}b_{0} + (a_{0}^{p}b_{1}+a_{1}b_{0}^{p})z+(a_{0}^{p^{2}}b_{2}+a_{1}^{p}b_{1}^{p}+a_{2}b_{0}^{p^{2}})z^{2} + \dotsb.
\end{multline}

By the way this gives us a morphism of rings
\begin{equation}\label{eqnPowerSeriesTwisted}
\begin{split}
\mathcal{F} = 1\times F\times F^{2}\times\dotsb: R[[z]] &\to R[[z]]^{F}\\
a_{0}+a_{1}z+a_{2}z^{2}+\dotsb &\mapsto a_{0}+a_{1}^{p}z+a_{2}^{p^{2}}z^{2}+\dotsb,
\end{split}
\end{equation}
which is an isomorphism if $R$ is reduced and closed under taking $p$-th roots. Note furthermore, that $R[[z]]^{F}_{N}=R[[z]]^{F}/z^{N}R[[z]]^{F}$ is only true if $R$ is perfect or if $N=0$.




On $\Aff^{N}_{R}=\Spec R[x_{0},x_{1},\dotsc,x_{N-1}]$ the ring structure of $R[[z]]^{F}_{N}$ induces a structure of ring scheme, such that for any $R$-algebra $S$ we have $\Aff^{N}_{R}(S) = S[[z]]^{F}_{N}$. Similarly, we obtain a structure of $R[[z]]^{F}_{N}$-module scheme on $\Aff_{R}^{nN}=(\Aff_{R}^{N})^{n}$.
\smallskip

\textbf{A grading on on the coordinate ring of $\Aff_{R}^{nN}$}.
The crucial observation is the following: if we fix the grading
\begin{equation*}
\deg x_{i,j} = p^{j},\quad i=1,\dotsc,n; j=0,\dotsc,N-1
\end{equation*}
on the coordinate ring of $\Aff_{R}^{nN}$, then addition,
$$
a: \Aff_{R}^{nN}\times_{\Spec R}\Aff_{R}^{nN} \to \Aff_{R}^{nN},
$$
as well as multiplication by a scalar $\in R[[z]]^{F}_{N}$, are given by graded homomorphisms of the respective coordinate rings (see equation \eqref{eqnMultTwisted}).
\emph{From now on, we will consider the coordinate ring of $\Aff_{R}^{nN}$ endowed with this grading}. In the following section we study the set of graded ideals $I\subset R[x_{i,j}]$, where these morphisms factor, e.g. $\overline{a^{\#}}: R[x_{i,j}]/I \to R[x_{i,j}]/I\otimes_{R}R[x_{i,j}]/I$, and similarly for scalar multiplication.


\section{A moduli space for lattice schemes}\label{sectionLatticeSchemes}

\subsection{Multigraded Hilbert schemes}

Let us briefly summarize what we will need about multigraded Hilbert schemes. Our reference for this purpose is Haiman and Sturmfels \cite{haiman-2002}.

Let $S$ be a scheme and let $\Aff_{S}^{n}$ be the $n$-dimensional affine space over
$S$, given by the sheaf $\mathcal{A} = \ssh_{S}[x_{1},\dotsc,x_{n}]$ of polynomial rings. A grading of $\mathcal{A}$ by a semigroup $A$ is a semigroup homomorphism $\deg: \N^{n}\to A$. Identifying (over any open set $U\subset S$) the vector $u\in \N^{n}$ with the monomial $x_{1}^{u_{1}}\dotsb x_{n}^{u_{n}}\in \mathcal{A}$ induces a (multi-) grading
\begin{equation*}
 \mathcal{A} = \oplus_{a\in A} \mathcal{A}_{a},
\end{equation*}
where, over any open set $U\subset S$, $\mathcal{A}_{a}(U)$ is the $\ssh_{S}(U)$-span of the monomials of degree $a$.

A quasi-coherent sheaf of homogeneous ideals $\idsh \subset \mathcal{A}$ is called \emph{admissible} over $S$ (for short: an admissible ideal), if $(\mathcal{A}/\idsh)_{a}$ is a locally free sheaf of constant rank on $S$ for all $a\in A$.
With any admissible ideal $\idsh \subset \mathcal{A}$ we associate its \emph{Hilbert function}, given by
\begin{equation*}
 h_{\idsh}: A \to \N, a\mapsto \rk(\mathcal{A}/\idsh)_{a}.
\end{equation*}
Subschemes defined by admissible ideals will be called admissible as well, and by the Hilbert function of an admissible subscheme, we will mean the Hilbert function of its structure sheaf.

Let $h: A\to \N$ be any function supported on $\deg(\N^{n})$, and define the \emph{Hilbert functor} $\mathcal{H}_{S}^{h}$ from the category of $S$-schemes to the category of sets by
\begin{multline}
 \mathcal{H}_{S}^{h}(X) = \lbrace \text{admissible ideals } \idsh \subset \mathcal{A}\otimes_{\ssh_{S}}\ssh_{X}\\ \text{ such that } \rk(\mathcal{A}\otimes_{\ssh_{S}}\ssh_{X}/\idsh)_{a} = h(a) \text{ for all } a\in A \rbrace.
\end{multline}

This functor is representable by a scheme:

\begin{thm}[\cite{haiman-2002}]\label{thmGradedHilbertscheme}
There exists a quasiprojective scheme $H_{S}^{h}$ over $S$ which represents the functor $\mathcal{H}_{S}^{h}$. If the grading of $\mathcal{A}$ is positive, i. e. $1$ is the only monomial of degree $0$, then this scheme is even projective over $S$. \qed
\end{thm}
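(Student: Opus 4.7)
The plan is to realize $H_S^h$ as a locally closed (in fact closed, in the positive case) subscheme of a suitable product of ordinary Grassmannians. The starting observation is that an admissible ideal $\idsh\subset \A$ with Hilbert function $h$ is a tuple $(\idsh_a)_{a\in A}$ where each $\idsh_a\subset \A_a$ is a locally free subsheaf with locally free quotient of rank $h(a)$. Such a subsheaf is precisely a point of the Grassmannian $\Grass(\A_a,h(a))$ of rank-$h(a)$ quotients of $\A_a$. The ``being an ideal'' conditions $x_i\cdot \idsh_a\subset \idsh_{a+\deg x_i}$ translate into the vanishing of the composite map $\idsh_a\hookrightarrow \A_a\xrightarrow{\,\cdot x_i\,}\A_{a+\deg x_i}\twoheadrightarrow \A_{a+\deg x_i}/\idsh_{a+\deg x_i}$, and such a vanishing defines a closed subfunctor of the relevant product of Grassmannians.

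The main obstacle is that the product $\prod_{a\in A}\Grass(\A_a,h(a))$ is in general infinite, so this naive description does not a priori give a scheme. Overcoming this requires a finiteness result of Gotzmann/Macaulay type: one must produce a finite set $D\subset A$ of degrees, depending only on $h$ and the combinatorics of the grading, with the property that every admissible ideal with Hilbert function $h$ is generated in degrees belonging to $D$ and is completely recovered from its graded pieces $(\idsh_a)_{a\in D}$ via the formula $\idsh_b = \sum_i x_i\cdot \idsh_{b-\deg x_i}$ in all higher degrees. This step is the heart of the argument: one must exhibit such a $D$ and verify that the prescribed Hilbert function is automatically attained in degrees outside $D$ once the finitely many multiplication-incidence conditions are imposed on a suitable enlarged degree range. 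I expect this Gotzmann-style bound to be the hard part, since it is where the combinatorics of the semigroup $A$ and of the Hilbert function must be controlled simultaneously.

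Granting this finiteness, $H_S^h$ embeds as a locally closed subscheme of the \emph{finite} product $\prod_{a\in D}\Grass(\A_a,h(a))$, cut out by the finitely many closed multiplication conditions described above together with the closed conditions ensuring that the ideal reconstructed by the formula has the desired Hilbert function throughout a sufficiently large bounding range. Since each Grassmannian factor is quasi-projective over $S$, so is $H_S^h$. When the grading is positive, $1$ is the only monomial of degree $0$, which forces every $\A_a$ to be locally free of \emph{finite} rank (only finitely many monomials can have a given degree). Each factor $\Grass(\A_a,h(a))$ is then projective over $S$, the finite product is projective, and hence its closed subscheme $H_S^h$ is projective over $S$ as well, giving the second assertion.
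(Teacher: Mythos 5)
The paper itself offers no proof here: the \verb|\qed| directly after the statement signals that it is being quoted verbatim from Haiman--Sturmfels, so there is no proof in the paper to compare yours against. Your sketch is a faithful summary of the strategy in the cited reference --- represent each graded piece $\idsh_a$ as a point of a Grassmannian of $\mathcal{A}_a$, impose the multiplication conditions, and invoke a Gotzmann/Macaulay-style finiteness statement to reduce to a finite set $D$ of degrees --- and you correctly single out that finiteness statement as the crux of the matter.

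Two caveats, so that you are not overstating what the sketch establishes. First, you have only named the hard step, not supplied it: in Haiman--Sturmfels this is the existence of a ``very supportive'' finite set of degrees, and proving it occupies most of the work, so what you wrote is an outline, not a proof. Second, finiteness of the degree set $D$ alone is not enough to conclude quasi-projectivity in the general (non-positive) case, because for $a\in D$ the module $\mathcal{A}_a$ may still be of infinite rank, and the Grassmannian of finite-rank locally free quotients of an infinite-rank module is representable but not of finite type over $S$; a second finiteness step (passing to a finite-dimensional ``generating'' subspace of each $\mathcal{A}_a$, $a\in D$, and arguing this loses nothing) is needed and is present in the cited proof but absent from your sketch. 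Your argument for the positive case is fine, though note that ``only finitely many monomials have a given degree'' is not literally a restatement of ``$1$ is the only monomial of degree $0$'': it follows from it (for finitely many variables, via a Gordan/Farkas-type argument producing a strictly positive linear functional on the degrees of the variables), but that implication deserves a sentence.
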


The scheme $H_{S}^{h}$ is called the \emph{multigraded Hilbert scheme}.
\subsection{Lattice schemes}\label{subsectionLatticeSchemes}

Recall that on $\Aff_{R}^{nN}$ we have a structure of $R[[z]]^{F}_{N}$-module scheme (section \ref{sectionPowerSeriesRings}). By $a: \Aff_{R}^{nN}\times_{\Spec R}\Aff_{R}^{nN} \to \Aff_{R}^{nN}$, resp. $m_{x}: \Aff_{R}^{nN}\to \Aff_{R}^{nN}$, denote the morphisms defining addition, resp. multiplication by the scalar $x\in R[[z]]^{F}_{N}$.

\begin{defn}\label{defLatticeScheme}
An admissible $R$-subscheme $V\subset \Aff_{R}^{nN}$ is a \emph{lattice scheme} over $R$, if the scheme-theoretic image under $a$ of $V\times_{R}V$, as well as the scheme-theoretic image under $m_{x}$ of $V$ (any $x\in R[[z]]^{F}_{N}$), factor through $V\subset \Aff_{R}^{nN}$. In other words, $V$ is required to be stable under the module operations on $\Aff_{R}^{nN}$.
\end{defn}

\begin{rem}
Let $I$ be the ideal defining an admissible $R$-subscheme $V\subset \Aff_{R}^{nN}$. Then the conditions of the definition can be restated by requiring that the morphisms
\begin{equation*}
\begin{split}
R[x_{i,j}]\xrightarrow{a^{\#}}R[x_{i,j}]&\otimes_{R}R[x_{i,j}] \to R[x_{i,j}]/I\otimes_{R}R[x_{i,j}]/I,\\
R[x_{i,j}]\xrightarrow{m_{x}^{\#}}&R[x_{i,j}] \to R[x_{i,j}]/I
\end{split}
\end{equation*}
factor through $R[x_{i,j}]/I$.
Note furthermore that any admissible $V$ contains the $0$-section and is trivially stable under taking additive inverses.
\end{rem}

Since we are considering admissible ideals, the target rings of the above morphisms are graded, with each graded component $R$-locally free of finite rank. Hence, for any of these morphisms, the image of the ideal $I$ defines a set of equations in $R$ defining the locus in $R$ where the respective morphism factors in the above manner. Thus, as a consequence of theorem \ref{thmGradedHilbertscheme}, we obtain

\begin{cor}
For any numerical function $h: \N \to \N$ there exists a projective $k$-scheme $\mathcal{M}^{(N),h}_{n}$ which parametrizes those admissible ideals of $\ssh_{\Aff_{k}^{nN}}$ that define closed $\Aff_{k}^{N}$-submodule schemes of $\Aff_{k}^{nN}$. More precisely: For any $k$-algebra $R$, the $R$-valued points of $\mathcal{M}^{(N),h}_{n}$ are exactly the sheaves of homogeneous ideals in $\ssh_{\Aff_{R}^{nN}}$ that are admissible over $R$ with Hilbert-function $h$ and define closed $\Aff_{R}^{N}$-submodule schemes of $\Aff^{nN}_{R}$. Equivalently, the $R$-valued points of $\mathcal{M}^{(N),h}_{n}$ are exactly the locally free closed $R$-subschemes of $\Aff_{R}^{nN}$ with Hilbert-function $h$, which are $\Aff_{R}^{N}$-submodule schemes of $\Aff^{nN}_{R}$.
\end{cor}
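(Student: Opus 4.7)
The plan is to exhibit $\mathcal{M}^{(N),h}_{n}$ as a closed subscheme of the multigraded Hilbert scheme $H_{k}^{h}$ provided by Theorem~\ref{thmGradedHilbertscheme}, applied to $\mathcal{A}=k[x_{i,j}]$ with the grading $\deg x_{i,j}=p^{j}$ and the prescribed Hilbert function $h$. Since every variable has strictly positive degree, the grading is positive and $H_{k}^{h}$ is projective over $k$, so any closed subscheme inherits projectivity. Write $\mathcal{I}^{\mathrm{univ}}\subset \mathcal{A}\otimes_{k}\ssh_{H_{k}^{h}}$ for the universal admissible ideal; by admissibility, each graded piece $\mathcal{I}^{\mathrm{univ}}_{a}$ and its quotient $(\mathcal{A}/\mathcal{I}^{\mathrm{univ}})_{a}$ is locally free of finite rank on $H_{k}^{h}$.

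Next I would translate the two conditions of Definition~\ref{defLatticeScheme} into explicit vanishing conditions on $H_{k}^{h}$. The addition morphism $a\colon \Aff_{k}^{nN}\times\Aff_{k}^{nN}\to\Aff_{k}^{nN}$ and the universal scalar multiplication $\mu\colon \Aff_{k}^{N}\times\Aff_{k}^{nN}\to\Aff_{k}^{nN}$ correspond to graded $k$-algebra homomorphisms
\[
a^{\#}\colon \mathcal{A}\to \mathcal{A}\otimes_{k}\mathcal{A},\qquad \mu^{\#}\colon \mathcal{A}\to k[y_{0},\dotsc,y_{N-1}]\otimes_{k}\mathcal{A},
\]
where I impose $\deg y_{j}=p^{j}$ so that both maps are homogeneous. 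The universal family is a lattice scheme if and only if, for every $a$, the two composites
\[
\mathcal{I}^{\mathrm{univ}}_{a}\xrightarrow{a^{\#}} (\mathcal{A}\otimes \mathcal{A})_{a}\twoheadrightarrow \bigl((\mathcal{A}/\mathcal{I}^{\mathrm{univ}})\otimes (\mathcal{A}/\mathcal{I}^{\mathrm{univ}})\bigr)_{a}
\]
and
\[
\mathcal{I}^{\mathrm{univ}}_{a}\xrightarrow{\mu^{\#}} (k[y]\otimes \mathcal{A})_{a}\twoheadrightarrow \bigl(k[y]\otimes (\mathcal{A}/\mathcal{I}^{\mathrm{univ}})\bigr)_{a}
\]
vanish. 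Source and target of each composite are locally free $\ssh_{H_{k}^{h}}$-modules of finite rank, so each vanishing locus is a closed subscheme of $H_{k}^{h}$. Intersecting over all $a$ gives the desired scheme $\mathcal{M}^{(N),h}_{n}$; by noetherianity of $H_{k}^{h}$ only finitely many of the degree-wise conditions impose new constraints, and the resulting closed subscheme is projective.

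The main obstacle I anticipate is the scalar-multiplication condition: $R[[z]]^{F}_{N}$ is a large ring of scalars, so one cannot directly intersect closed conditions indexed by individual elements $x$, and the maps $m_{x}$ of Definition~\ref{defLatticeScheme} must be replaced by the single universal morphism $\mu$. The key point is that with $\deg y_{j}=p^{j}$ every graded piece of $k[y]\otimes \mathcal{A}$ is a finite-rank free $k$-module\textemdash this is precisely why the coordinate ring was equipped with the grading introduced at the end of Section~\ref{sectionPowerSeriesRings}\textemdash so that $\mu^{\#}$ restricts in each degree to a morphism between finite-rank locally free sheaves. This reduces the module-stability condition to a genuinely closed condition on $H_{k}^{h}$ and makes the intersection argument above go through.
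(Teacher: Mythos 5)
Your argument is correct and is essentially the paper's own proof: realize $\mathcal{M}^{(N),h}_{n}$ as a closed subscheme of $H_{k}^{h}$ (projective because the grading is positive) by expressing the two module-stability conditions, degree by degree, as the vanishing of maps between finite-rank locally free $\mathcal{O}_{H_{k}^{h}}$-modules. Your replacement of the paper's family of comorphisms $m_{x}^{\#}$ (one per scalar $x$) by the single universal comultiplication $\mu^{\#}$ is a tidy and slightly more careful reformulation, not a different route.

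One small correction: with $\deg y_{j}=p^{j}$ the map $\mu^{\#}$ is \emph{not} degree-preserving, unlike $a^{\#}$. Indeed $\mu^{\#}(x_{i,l})=\sum_{r+s=l}y_{r}^{p^{s}}x_{i,s}^{p^{r}}$ has total degree $2p^{l}$, so the target in your second composite should be $(k[y]\otimes\mathcal{A})_{2a}$ (equivalently the bidegree-$(a,a)$ component), not $(k[y]\otimes\mathcal{A})_{a}$. This does not affect the conclusion, since that graded piece is still locally free of finite rank, and the vanishing locus is therefore still closed. Also, the appeal to noetherianity to bound the number of conditions is unnecessary: an arbitrary intersection of closed subschemes is closed.
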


\begin{proof}
By theorem \ref{thmGradedHilbertscheme} we obtain a projective $k$-scheme $H_{k}^{h}$ parametrizing admissible ideals with Hilbert function $h$. The discussion above shows that the locus in $H_{k}^{h}$, where the universal family is stable under the module operations on $\Aff_{H_{k}^{h}}^{nN}$, is a closed subscheme of $H_{k}^{h}$. This is the scheme $\mathcal{M}^{(N),h}_{n}$.
\end{proof}

It is easy to see that the (set)-valued functor on the category of $k$-algebras
$$
\Loop^{F,\geq 0}G: R\mapsto G(R[[z]]^{F})
$$
is representable by an infinite dimensional affine group scheme over $k$. Similarly, the functor
$$
\Loop^{F,\geq 0}_{N}G: R\mapsto G(R[[z]]^{F}_{N})
$$
is representable by a (finite dimensional) algebraic group over $k$. We have $\Loop^{F,\geq 0}_{N}G = \operatorname{Res}_{k[[z]]^{F}_{N}/k}(G\times_{\Spec k}\Spec k[[z]]^{F}_{N})$, the Weil restriction of $G\times_{\Spec k}\Spec k[[z]]^{F}_{N}$. There are canonical morphisms of $k$-groups
\begin{equation}\label{eqnNonTwistedTwistedMorphisms}
\Loop^{\geq 0}G \to \Loop^{F,\geq 0}G, \quad \Loop^{F,\geq 0}G \to \Loop^{F,\geq 0}_{N}G,
\end{equation}
where the first of these morphisms is induced by the map \eqref{eqnPowerSeriesTwisted}. Moreover, there is an obvious algebraic operation of the $k$-group $\Loop^{F,\geq 0}G$ on $\Aff_{k}^{nN}$ given by multiplication of an $n\times n$-matrix and an $n\times 1$-vector over the ring of twisted truncated power series of length $N$. This operation factors through $\Loop^{F,\geq 0}_{N}G$ and respects the grading of the structure sheaf of $\Aff_{k}^{nN}$. Hence, by the universal property of $\mathcal{M}^{(N),h}_{n}$, we obtain an operation
$$
\Loop^{F,\geq 0}_{N}G \times_{\Spec k} \mathcal{M}^{(N),h}_{n} \to \mathcal{M}^{(N),h}_{n}.
$$
Certain orbits of this action and their closures will be the object of our interest:

\subsubsection{The standard lattice scheme associated to a dominant cocharacter}\label{subsubsectionStdLattice}

Let $\lambda\in \domcochar(T)$ and set $N:= \max \lbrace \langle \alpha, \lambda \rangle \suchThat \alpha \text{ a root of }T \rbrace$. Via the natural identification $\domcochar(T) \subset \mathbb{Z}^{n}$ let $\lambda = (\lambda_{1},\dotsc,\lambda_{n})$, and set $\tilde{\lambda} = \lambda - (\lambda_{n},\dotsc,\lambda_{n}) \in \N^{n}$. Then $N=\tilde{\lambda}_{1}$, and $\tilde{\lambda}$ defines a lattice scheme over $k$, given by the ideal $I(\lambda) \subset k[x_{i,j}; i=1,\dotsc,n; j=0,\dotsc,N-1]$, where
\begin{equation}\label{eqnIdeal}
I(\lambda) = (x_{1,0},\dotsc,x_{1,\tilde{\lambda}_{1}-1},\dotsc,x_{n-1,0},\dotsc,x_{n-1,\tilde{\lambda}_{n-1}-1}).
\end{equation}
Let us call this lattice scheme the \emph{standard lattice scheme} associated to $\lambda$. We denote its $\Loop^{F,\geq 0}G$-orbit in $\mathcal{M}^{(N),h}_{n}$ by $\mathcal{O}(\lambda)$, and its orbit-closure by $\mathcal{D}(\lambda)$. The latter will turn out to be closely related to a Demazure resolution of the Schubert variety $\mathcal{S}(\lambda)$ with respect to the `standard decomposition of $\lambda$` into minuscule dominant coweights, which we describe below.

\subsubsection{The standard decomposition of a dominant cocharacter}\label{subsubsectionStdDec}

For $\lambda\in \domcochar(T)$ choose $\mu_{i} =(1,\dotsc,1,0,\dotsc,0)$ such that $\lvert \mu_{i}\rvert := \text{number of entries in }\tilde{\lambda}\text{ which are}\geq i$, for $i=1,\dotsc,N$. This defines a decomposition of $\lambda$ into minuscule dominant coweights $\bar{\mu_{i}}$ as described in section \ref{sectionDemazureVarieties}. Obviously we have $\tilde{\lambda}=\mu_{1}+\dotsb+\mu_{N}$, and the $\mu_{i}$'s are ordered `by size`:
$$
\lvert \mu_{1} \rvert \geq \dotsb \geq \lvert \mu_{N} \rvert.
$$
We call this decomposition of $\lambda$ into minuscule dominant coweights the \emph{standard decomposition} of $\lambda$. For the rest of the paper we will assume $\lambda$, $\tilde{\lambda}$ and the $\mu_{i}$'s chosen in this way.


\section{Twisted linearity and flatness results}\label{sectionTechnical}

To give an idea of the relation of $\mathcal{D}(\lambda)$ to Demazure resolutions and thereby motivate the subsequent technical section, let's consider

\begin{ex}\label{exDemazure}
Choose $n=2$ (hence $G=\Sl_{2}$) and $\lambda=(1,-1)\in\domcochar(T)$. Then $N=2$, $\tilde{\lambda} = (2,0)$, and the standard lattice scheme associated to $\lambda$ is given by $I=I(\lambda)=\langle x_{1,0},x_{1,1}\rangle$. For convenience we rename the variables $x_{1,j} \mapsto x_{j}, x_{2,j} \mapsto y_{j}$, whence $I=\langle x_{0},x_{1}\rangle$.\\
\emph{Claim:} The $\Loop^{F,\geq 0}G$-orbit of $I$ consists of all ideals of the form
$$
\langle a_{0}x_{0}+b_{0}y_{0}, a_{0}^{p}x_{1} + a_{1}x_{0}^{p} + b_{0}^{p}y_{1} + b_{1}y_{0}^{p}\rangle,\text{ such that } a_{0} \neq 0\text{ or }b_{0} \neq 0. 
$$
To see this, one calculates the effect of a matrix $A=\begin{pmatrix} a_{0}+a_{1}z & b_{0}+b_{1}z \\ c_{0}+c_{1}z & d_{0}+d_{1}z\end{pmatrix}\in G(k[[z]]^{F})$ on the vector $x = \begin{pmatrix} x_{0}+x_{1}z \\ y_{0}+y_{1}z\end{pmatrix}\in (k[x_{i},y_{i}][[z]]^{F})^{2}$: We obtain
$$
A\cdot x \equiv \begin{pmatrix} (a_{0}x_{0}+b_{0}y_{0}) + (a_{0}^{p}x_{1} + a_{1}x_{0}^{p} + b_{0}^{p}y_{1} + b_{1}y_{0}^{p})z \\  (c_{0}x_{0}+d_{0}y_{0}) + (c_{0}^{p}x_{1} + c_{1}x_{0}^{p} + d_{0}^{p}y_{1} + d_{1}y_{0}^{p})z \end{pmatrix} \text{ modulo higher $z$-powers}.
$$
Since the operation of $\Loop^{F,\geq 0}G$ on $k[x_{i},y_{i}]$ is given by the transpose of this action, we see that the images of $x_{0}$ resp. $x_{1}$ are of the form $a_{0}x_{0}+b_{0}y_{0}$ resp. $a_{0}^{p}x_{1} + a_{1}x_{0}^{p} + b_{0}^{p}y_{1} + b_{1}y_{0}^{p}$, with $a_{0} \neq 0$ or $b_{0} \neq 0$. Thus the claim.\\
\emph{Claim:} the ideal $I(\lambda)$ is graded, with
$$I_{1}\cap \langle x_{0},y_{0}\rangle = \langle ax_{0}+by_{0}\rangle$$
and
$$I_{p}\cap \langle x_{0}^{p},y_{0}^{p},x_{1},y_{1}\rangle = \langle a^{p}x_{0}^{p}+b^{p}y_{0}^{p},cx_{0}^{p}+dy_{0}^{p}+a^{p}x_{1}+b^{p}y_{1}\rangle.$$
Only the latter equation requires an argument: we have to verify that, if $(ax_{0}+by_{0})\cdot P(x_{0},y_{0}) \in \langle x_{0}^{p},y_{0}^{p}\rangle$, then $P(x_{0},y_{0})=\gamma(ax_{0}+by_{0})^{p-1}$, for $\gamma\in k$. Assume $a\neq 0$ and consider the linear transformation of variables $x_{0}\mapsto (1/a)x_{0}-(b/a)y_{0}$, which stabilizes $\langle x_{0}^{p},y_{0}^{p}\rangle$. Hence $(ax_{0}+by_{0})\cdot P(x_{0},y_{0}) \in \langle x_{0}^{p},y_{0}^{p}\rangle$ if and only if $x_{0}\cdot P((1/a)x_{0}-(b/a)y_{0},y_{0}) \in \langle x_{0}^{p},y_{0}^{p}\rangle$. Thus we must have $P((1/a)x_{0}-(b/a)y_{0},y_{0})=\gamma'x_{0}^{p-1}$ for some $\gamma'\in k$, whence $P(x_{0},y_{0}) = \gamma(ax_{0}+by_{0})^{p-1}$. The case $b\neq 0$ is similar, which proves the claim.

Applying the absolute Frobenius morphism to the first of these modules yields, of course, a submodule of the second one, and identifying $x_{0}^{p},y_{0}^{p},x_{1},y_{1}$ with the standard basis of $k^{2\times 2}$, these define a descending sequence of lattices in $k^{2\times 2}$ in the sense of section \ref{sectionDemazureVarieties}:

\begin{multline*}
 \mathcal{L}_{0}=\Hom_{k}(k^{2\times 2},k)\supset \mathcal{L}_{1}=\Hom_{k}(k^{2\times 2}/\langle a^{p}x_{0}^{p}+b^{p}y_{0}^{p}\rangle,k) \supset \\ \supset \mathcal{L}_{2}= \Hom_{k}(k^{2\times 2}/\langle a^{p}x_{0}^{p}+b^{p}y_{0}^{p},cx_{0}^{p}+dy_{0}^{p}+a^{p}x_{1}+b^{p}y_{1}\rangle,k).
\end{multline*}

This corresponds to a point in the Demazure resolution of $\mathcal{S}(\lambda)$ (in the situation of this example there is only one Demazure resolution, since there is only one decomposition of $\lambda$ into minuscule dominant coweights).
\end{ex}

\begin{rem}
From the example it is clear that we will have to deal with certain Frobenius twists when relating points of $\mathcal{D}(\lambda)$ to points of a Demazure variety.
\end{rem}

As always, let $R$ be an arbitrary $k$-algebra, denote by $\mathfrak{X} = \lbrace x_{1},\dotsc,x_{m}\rbrace$ a set of indeterminates with $\deg x_{i}=p^{d_{i}}$ for any $i$, and set $$\fpv{l}{R} = \displaystyle\oplus_{d_{i}\leq l}Rx_{i}^{p^{l-d_{i}}}.$$
To carry out the construction of the example for any ($R$-valued) point in $\mathcal{D}(\lambda)$, possibly meeting the boundary of $\mathcal{O}(\lambda)$, we need to know that the corresponding ideals are well behaved, in a sense to be made precise, with respect to intersections with the $R$-submodules $\fpv{l}{R}$ of $R[x_{i,j}]$. To show this is the goal of this section.

The crucial property that the ideals which we consider will turn out to have, is subject of the following

\begin{defn}
Let $R$ be any $k$-algebra.
We call an element $f\in R[\mathfrak{X}]$ \emph{twisted-linear}, if $f \in \cup_{l} \fpv{l}{R}$.
We call an ideal $I\subset R[\mathfrak{X}]$ \emph{twisted-linear}, if $I$ is generated by a finite subset consisting of twisted-linear elements. Equivalently, a finitely generated ideal $I$ is twisted-linear, if and only if it is generated by $\cup_{l} (\fpv{l}{R}\cap I)$. Such an ideal is obviously graded.
\end{defn}

\begin{rem}
 A similar notion of `$q$-linearized polynomials`, where $q=p^{r}$, is considered by Elkies in \cite{elkies-1999}. There, a univariate polynomial $P$ is called $q$-linearized, if it has the form $P=\sum_{m=0}^{n}c_{m}T^{q^{m}}$. Our notion of twisted-linear ideal can be seen as the case of several such equations in several variables, where the equations are required to be homogeneous. However, the questions pursued by Elkies are of a different (Galois-theoretic) nature, whence I don't see further parallels in these papers.
\end{rem}

In the following, if $M$ is a graded $R$-module, we denote by $M_{d}$ its degree $d$-part. Note that $\fpv{N}{R}$ is a direct summand of $R[\mathfrak{X}]_{p^{N}}$. Note furthermore that, if $I$ is finitely generated, then $R[\mathfrak{X}]/I$ is finitely presented as an $R$-algebra, whence every homogeneous component of $R[\mathfrak{X}]/I$ is finitely presented as an $R$-module. Thus every such homogeneous component is $R$-flat if and only if it is $R$-projective if and only if it is locally free over $R$. In this case, the same properties hold for the homogeneous components of $I$.

\begin{lem}\label{lemIntersections}
Let $I\subset R[\mathfrak{X}]$ be a twisted-linear ideal such that $R[\mathfrak{X}]/I$ is $R$-flat. Assume that $I$ is generated by twisted-linear elements $f_{j}$ with $\deg f_{j}=p^{e_{j}}$ for each $j$. Then we have for any $N$
\begin{equation*}
\sum_{e_{j}\leq N} R f_{j}^{p^{N-e_{j}}} = \fpv{N}{R}\cap I.
\end{equation*}
\end{lem}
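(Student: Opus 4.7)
For the $\supseteq$ containment, twisted-linearity gives $f_j = \sum_{d_i \le e_j} c_{ij} x_i^{p^{e_j-d_i}}$; raising to the $p^{N-e_j}$-th power in characteristic $p$ (where Frobenius is additive) yields
\[
f_j^{p^{N-e_j}} = \sum_{d_i \le e_j} c_{ij}^{p^{N-e_j}} x_i^{p^{N-d_i}} \in \fpv{N}{R},
\]
which also lies in $I$ as a power of $f_j$.

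For the $\subseteq$ containment I would proceed by induction on $N$. The base $N = 0$: any $g \in \fpv{0}{R} \cap I$ is homogeneous of degree $1$, and a decomposition $g = \sum h_j f_j$ with $h_j$ homogeneous forces $\deg h_j = 1 - p^{e_j} \ge 0$, hence $e_j = 0$ and $h_j \in R$, placing $g$ in $\sum_{e_j = 0} R f_j$. For the inductive step, write $g = \sum_j h_j f_j$ with $h_j$ homogeneous of degree $p^N - p^{e_j}$; the terms with $e_j = N$ have $h_j \in R$ and directly contribute an element of $\sum_{e_j = N} R f_j$, so after subtracting them I reduce to showing that $g = \sum_{e_j < N} h_j f_j \in \fpv{N}{R}$, with $\deg h_j > 0$, lies in $\sum_{e_j < N} R f_j^{p^{N-e_j}}$.

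For this key step the plan is a two-step reduction. First, use the flatness of $R[\mathfrak{X}]/I$ to reduce to the case where $R = k$ is a perfect field: both sides of the claimed equality commute with flat base change $R \to R'$, since $\fpv{N}{R}$ is a direct $R$-summand of the free module $R[\mathfrak{X}]_{p^N}$ and flatness gives $I \otimes_R R' = I\, R'[\mathfrak{X}]$, so it suffices to verify the inclusion after base change to the perfect closure of each residue field. Second, over a perfect field $k$, apply Frobenius descent: split $g$ into the part supported on variables with $d_i = N$ (which can be handled by direct linear algebra using the twisted-linear generators with $e_j = N$) and the rest, and rewrite the latter as a $p$-th power $\tilde g^{\, p}$ for some $\tilde g \in \fpv{N-1}{k}$, using $p$-th roots of coefficients available in the perfect field $k$. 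Once one verifies $\tilde g \in \fpv{N-1}{k}\cap I$, the inductive hypothesis provides $\tilde g = \sum_{e_j < N} a_j f_j^{p^{N-1-e_j}}$, and raising to the $p$-th power gives the required expression.

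The main obstacle is the descent step itself: $\tilde g^{\, p} \in I$ does not in general imply $\tilde g \in I$, so the argument must exploit the twisted-linear structure of the generators---each $f_j^{p^{N-e_j}}$ is itself the $p$-th power of the lower-level element $f_j^{p^{N-1-e_j}}$---to promote the $p$-th-power relation of $g$ with $I$ to an honest ideal membership of $\tilde g$. Concretely this should reduce to a statement about syzygies among the generators in degree $p^{N-1}$, and the flatness hypothesis on $R[\mathfrak{X}]/I$ is precisely what is needed to rule out any syzygies other than the expected Frobenius-related ones.
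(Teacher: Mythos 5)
Your $\supseteq$ direction and the base case $N=0$ are fine. But the inductive step has two genuine gaps, and they sit exactly where the real work of the lemma is.

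First, the reduction to $R=k$ a perfect field is not sound. The map $R\to\kappa(\mathfrak p)$ to a residue field is not flat, and neither is $R\to\kappa(\mathfrak p)^{1/p^\infty}$, so the claim ``both sides commute with flat base change, hence it suffices to check over perfect residue fields'' does not go through: you would be checking the inclusion only fiberwise, which does not recover an inclusion of submodules over $R$ without further argument. Worse, the compatibility of $\fpv{N}{R}\cap I$ with base change is precisely Corollary~\ref{corIntersections}, which the paper \emph{deduces} from this lemma — so invoking it here would be circular.

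Second, you explicitly flag the Frobenius-descent step — getting from $g\in\fpv{N}{k}\cap I$ to $\tilde g\in\fpv{N-1}{k}\cap I$ with $\tilde g^{\,p}=g$ — as the main obstacle, and then leave it as a plan (``should reduce to a statement about syzygies\dots''). This is where the lemma actually lives, and the plan is not carried out; $\tilde g^{\,p}\in I$ does not give $\tilde g\in I$, and nothing in your sketch closes that gap.

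The paper's proof avoids both problems by changing the \emph{ambient ring} rather than the \emph{base ring}. It inducts on the maximum degree $p^n$ of a generator: after a Zariski-local change of degree-one variables, the degree-one generators become honest variables, which are killed by passing from $R[\mathfrak X]$ to $R[\mathfrak X']$; then the remaining degree-one variables $x$ are replaced by $x^p$ to form $R[\mathfrak X'']$, and one observes $R[\mathfrak X']/I' \simeq (R[\mathfrak X'']/I'')[p\text{-th roots of some variables}]$, a \emph{faithfully flat} extension. This gives $I''=I'\cap R[\mathfrak X'']$ and lets one divide all degrees by $p$ and apply the inductive hypothesis — the Frobenius descent is packaged into a flat ring extension, so no element-level $p$-th root of anything in $I$ or $R$ is ever needed, and $R$ stays arbitrary throughout. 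You would need something of this kind to make your descent step rigorous.
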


Before proving this lemma, we state two corollaries:

\begin{cor}\label{corIntersections}
Let $I\subset R[\mathfrak{X}]$ be a twisted-linear ideal such that $R[\mathfrak{X}]/I$ is flat over $R$. Then for any $R$-algebra $S$:
\begin{equation*}
(\fpv{N}{R}\cap I)\cdot S = \fpv{N}{S}\cap (I\otimes_{R} S)
\end{equation*}
(where the left expression denotes the image of $(\fpv{N}{R}\cap I)\otimes_{R} S$ in $S[\mathfrak{X}]$).
\end{cor}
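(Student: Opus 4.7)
The plan is to reduce the claim to two applications of Lemma \ref{lemIntersections}: one in the ring $R[\mathfrak{X}]$, the other in the base-changed ring $S[\mathfrak{X}]$. Both sides of the desired equality will then acquire the same explicit description as a sum of $p$-th power iterates of a fixed set of twisted-linear generators.

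First I would fix a finite generating set $\{f_j\}$ of $I$ consisting of twisted-linear elements, with $\deg f_j = p^{e_j}$, whose existence is built into the definition of a twisted-linear ideal. Next I would set up the base change. Because $R[\mathfrak{X}]/I$ is $R$-flat, tensoring the short exact sequence $0 \to I \to R[\mathfrak{X}] \to R[\mathfrak{X}]/I \to 0$ with $S$ over $R$ preserves exactness, so the natural map $I \otimes_R S \to S[\mathfrak{X}]$ is injective and identifies $I \otimes_R S$ with the ideal of $S[\mathfrak{X}]$ generated by the images of the $f_j$. Since $\fpv{l}{R} \otimes_R S = \fpv{l}{S}$ (both are free on the generators $x_i^{p^{l-d_i}}$ for $d_i \leq l$), each $f_j$, viewed in $S[\mathfrak{X}]$, still lies in $\fpv{e_j}{S}$; hence $I \otimes_R S$ is twisted-linear, with the same generators and the same degrees. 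Moreover $S[\mathfrak{X}]/(I \otimes_R S) \cong (R[\mathfrak{X}]/I) \otimes_R S$ is $S$-flat, so the hypotheses of Lemma \ref{lemIntersections} are satisfied over $S$ as well.

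Now I would apply Lemma \ref{lemIntersections} twice. Over $R$ it gives
\[
\fpv{N}{R} \cap I \;=\; \sum_{e_j \leq N} R\, f_j^{p^{N-e_j}},
\]
and over $S$ it gives
\[
\fpv{N}{S} \cap (I \otimes_R S) \;=\; \sum_{e_j \leq N} S\, f_j^{p^{N-e_j}}.
\]
Extension of scalars from $R$ to $S$ turns the first identity into
\[
(\fpv{N}{R} \cap I) \cdot S \;=\; \sum_{e_j \leq N} S\, f_j^{p^{N-e_j}},
\]
which coincides with the right-hand side of the second identity. This is the claimed equality.

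The only real obstacle is the compatibility step in the middle: one must know that the image of $I$ in $S[\mathfrak{X}]$ really is $I \otimes_R S$ as an ideal (so that Lemma \ref{lemIntersections} may legitimately be applied with the same generators), and that the $\fpv{N}{-}$ construction commutes with base change. Both points rest squarely on the flatness of $R[\mathfrak{X}]/I$ over $R$ together with the fact that $\fpv{N}{R}$ is a free $R$-module on an explicit monomial basis; once these two points are in place, the corollary is just the formal consequence of the lemma recorded above.
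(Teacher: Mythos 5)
Your argument is correct and follows exactly the paper's own proof: apply Lemma \ref{lemIntersections} once over $R$ and once over $S$ to the same set of twisted-linear generators $\{f_j\}$, and observe that both sides collapse to $\sum_{e_j\leq N} S\, f_j^{p^{N-e_j}}$. You simply spell out more explicitly the flatness and base-change bookkeeping that the paper leaves implicit.
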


\begin{proof}[Proof of corollary]
 The ideal $I\otimes_{R}S \subset S[\mathfrak{X}]$ satisfies the assumptions of the lemma, with $R$ replaced by $S$. In particular, it is generated by the images of the $f_{j}$ in $S[\mathfrak{X}]$. Hence we have
$$
\fpv{N}{S}\cap (I\otimes_{R} S) = \sum_{e_{j}\leq N} S f_{j}^{p^{N-e_{j}}} = (\fpv{N}{R}\cap I)\cdot S,
$$
where the first equality results from the lemma for the ideal $I\otimes_{R}S$, while the second is a consequence of the lemma for the ideal $I$ itself.
\end{proof}

\begin{cor}\label{corFlatness}
Let $I\subset R[\mathfrak{X}]$ be a twisted-linear ideal such that $R[\mathfrak{X}]/I$ is flat over $R$. Then the $R$-module $\fpv{N}{R}/(\fpv{N}{R} \cap I)$ is $R$-flat, as well as the quotient of degree $p^{N}$-components $R[\mathfrak{X}]_{p^{N}}/(\fpv{N}{R}+I_{p^{N}})$.
\end{cor}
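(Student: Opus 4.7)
The plan is to realise both quotients as the outer terms of a single short exact sequence whose middle term is visibly flat, and then to force flatness of the outer terms by a fibre-dimension count. Concretely, the inclusion $\fpv{N}{R}\hookrightarrow R[\mathfrak{X}]_{p^N}$ yields the short exact sequence
\[
0 \longrightarrow \fpv{N}{R}/(\fpv{N}{R}\cap I) \longrightarrow R[\mathfrak{X}]_{p^N}/I_{p^N} \longrightarrow R[\mathfrak{X}]_{p^N}/(\fpv{N}{R}+I_{p^N}) \longrightarrow 0,
\]
in which the left map is injective because $I$ is homogeneous and $\fpv{N}{R}$ lies entirely in degree $p^N$, so $\fpv{N}{R}\cap I=\fpv{N}{R}\cap I_{p^N}$. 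The middle term is $R$-flat: being graded, $R[\mathfrak{X}]/I$ decomposes as an $R$-module into its homogeneous components, each of which is therefore a direct summand of a flat module and hence flat, and admissibility even makes it locally free of constant rank $h(p^N)$. Both outer terms are finitely presented: lemma \ref{lemIntersections} supplies explicit finite generators of $\fpv{N}{R}\cap I$, while $I_{p^N}$ is finitely generated as the kernel of a surjection between locally free modules of finite rank.

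Next I would show that this short exact sequence remains short exact after base change to any residue field $k(\mathfrak{p})$ of $R$. Right-exactness is automatic; for injectivity on the left, corollary \ref{corIntersections} applied to the $R$-algebra $S=k(\mathfrak{p})$ gives
\[
\fpv{N}{R}/(\fpv{N}{R}\cap I)\otimes_R k(\mathfrak{p}) \;=\; \fpv{N}{k(\mathfrak{p})}/(\fpv{N}{k(\mathfrak{p})}\cap I_{k(\mathfrak{p})}),
\]
and this visibly injects into $k(\mathfrak{p})[\mathfrak{X}]_{p^N}/(I_{k(\mathfrak{p})})_{p^N}$; flatness of the middle term identifies the latter with $R[\mathfrak{X}]_{p^N}/I_{p^N}\otimes_R k(\mathfrak{p})$. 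Adding fibre dimensions therefore gives, at every prime $\mathfrak{p}\subset R$,
\[
\dim_{k(\mathfrak{p})}\!\bigl(\fpv{N}{R}/(\fpv{N}{R}\cap I)\otimes_R k(\mathfrak{p})\bigr) + \dim_{k(\mathfrak{p})}\!\bigl(R[\mathfrak{X}]_{p^N}/(\fpv{N}{R}+I_{p^N})\otimes_R k(\mathfrak{p})\bigr) = h(p^N).
\]

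I would conclude by invoking upper semi-continuity of the fibre-dimension function on $\Spec R$ for a finitely generated $R$-module. Both summands above are upper semi-continuous integer-valued functions on $\Spec R$ whose sum is the constant $h(p^N)$, which forces each to be locally constant. A finitely presented module with locally constant fibre rank is locally free, and hence $R$-flat, and both assertions follow. The only step that genuinely uses the twisted-linearity hypothesis is the base-change identity for $\fpv{N}{R}\cap I$ in the middle paragraph, i.e.\ corollary \ref{corIntersections}; once that is available the remaining work is a standard fibre-dimension / semi-continuity routine, and I do not expect any real obstacle beyond bookkeeping.
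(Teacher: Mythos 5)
Your proof uses the same short exact sequence and the same key input (Corollary \ref{corIntersections} applied to residue fields) as the paper does, but the final step you invoke is false as stated, and the detour through fibre dimensions is not needed.

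The claim \emph{``a finitely presented module with locally constant fibre rank is locally free''} fails over a non-reduced base. Take $R=k[\epsilon]/(\epsilon^{2})$ and $M=R/(\epsilon)$: then $M$ is finitely presented, $\Spec R$ is a single point, and $\dim_{k}\bigl(M\otimes_{R}k\bigr)=1$, so the fibre rank is (vacuously) locally constant, yet $M$ is not free, since a free module of rank $1$ would have $k$-dimension $2$. Constancy of fibre rank characterizes flatness for a finite module only under extra hypotheses (such as $R$ reduced), and in this paper $R$ is an arbitrary $k$-algebra. So upper semi-continuity plus the constant sum $h(p^{N})$ does give you that both fibre-rank functions are locally constant, but you cannot conclude flatness from that alone.

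The fix is to use directly what you already proved. You showed the short exact sequence
\[
0 \longrightarrow \fpv{N}{R}/(\fpv{N}{R}\cap I) \longrightarrow \bigl(R[\mathfrak{X}]/I\bigr)_{p^{N}} \longrightarrow R[\mathfrak{X}]_{p^{N}}/\bigl(\fpv{N}{R}+I_{p^{N}}\bigr) \longrightarrow 0
\]
stays exact after tensoring with every residue field $k(\mathfrak{p})$. Since the middle term is $R$-flat, the long exact sequence of $\Tor$ gives $\Tor_{1}^{R}\bigl(R[\mathfrak{X}]_{p^{N}}/(\fpv{N}{R}+I_{p^{N}}),\,k(\mathfrak{p})\bigr)=0$ for every prime $\mathfrak{p}$. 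For a finitely presented module over a local ring, vanishing of $\Tor_{1}$ against the residue field forces freeness (choose a minimal presentation $0\to K\to A^{n}\to M\to 0$ with $K\subset\mathfrak{m}A^{n}$; $\Tor_{1}=0$ gives $K\otimes k(\mathfrak{p})=0$, and Nakayama on the finitely generated $K$ gives $K=0$ — no Noetherian hypothesis needed). Hence the cokernel is locally free and thus flat, and the flatness of $\fpv{N}{R}/(\fpv{N}{R}\cap I)$ then follows from the short exact sequence. This is exactly the argument the paper compresses into ``the long exact sequence of $\Tor$'s tells us that the cokernel is flat.'' Replace your last paragraph with this $\Tor$-vanishing argument and the proof is correct.
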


\begin{proof}[Proof of corollary]
 The second module in question is the cokernel of the injection of $R$-modules
\begin{equation*}
 \fpv{N}{R}/(\fpv{N}{R}\cap I)\hookrightarrow (R[\mathfrak{X}]/I)_{p^{N}}.
\end{equation*}
Corollary \ref{corIntersections} states that this map is still injective after tensoring with any $R$-algebra $S$, so the long exact sequence of $\Tor$'s tells us that the cokernel is flat. And thus also $\fpv{N}{R}/(\fpv{N}{R}\cap I)$.
\end{proof}

Hence we see that we can in fact restate the equation in corollary \ref{corIntersections} in the form
\begin{equation*}
(\fpv{N}{R}\cap I)\otimes_{R} S = \fpv{N}{S}\cap (I\otimes_{R} S).
\end{equation*}
We interpret this equation as follows: \emph{cutting out of a twisted-linear ideal the twisted-linear part of given degree is functorial with respect to base change.}

\begin{proof}[Proof of lemma]
Since $R[\mathfrak{X}]_{1}/I_{1}$ is $R$-flat and of finite presentation, and since our claim is local on $R$, we can assume that $R[\mathfrak{X}]_{1}/I_{1}$ is free. Since invertible linear transformations of the degree 1-variables do not affect twisted-linearity nor flatness, we can assume without loss of generality that the $f_{j}$ of degree 1 are in fact in $\mathfrak{X}$, i.e. variables of degree 1. If all the $f_{j}$ have degree 1, we are done. Otherwise, we apply induction on $n$, where $p^{n}$ is the maximum degree of a generator $f_{j}$. By our assumption on $I_{1}$, we have $R[\mathfrak{X}]/I \simeq R[\mathfrak{X}']/I'$, where $\mathfrak{X}'$ is $\mathfrak{X}$ minus the variables generating $I_{1}$, and $I'$ is the image of $I$ under the quotient map $R[\mathfrak{X}]\to R[\mathfrak{X}']$. Hence, $I'$ is twisted-linear and generated by those $g_{j} = \operatorname{image}(f_{j}) \in R[\mathfrak{X}']$ which have degree $\geq p$.

Set $\mathfrak{X}'' = \lbrace x\in\mathfrak{X'}; \deg x>1\rbrace \cup \lbrace x^{p}\in \mathfrak{X}'; \deg x=1\rbrace$. Then the $g_{j}$ already lie in $R[\mathfrak{X}'']$ and generate an ideal $I''\subset R[\mathfrak{X}'']$. We obtain $R[\mathfrak{X}']/I' \simeq (R[\mathfrak{X}'']/I'')[p\text{-th roots of some variables in }\mathfrak{X}'']$. By faithful flatness of extension by $p$-th roots, $R[\mathfrak{X}'']/I''$ is flat over $R$. Now we only deal with variables of degree $\geq p$, and $I''$ is still twisted-linear, so for the moment we can think of all degrees divided by $p$ and apply the induction hypotheses, so to obtain the claim of the lemma for $I''\subset R[\mathfrak{X}'']$:
\begin{equation*}
\sum_{1 \leq e_{j}\leq N} R g_{j}^{p^{N-e_{j}}} = \fpv{N}{R}''\cap I''.
\end{equation*}
(Here, by abuse of notation, we denote by $g_{j}$ also the (unique) preimages of the $g_{j}$ in $R[\mathfrak{X}'']$. As for $R[\mathfrak{X}]$, we denote by $\fpv{N}{R}''$ and $\fpv{N}{R}'$ the modules of twisted-linear monomials of degree $p^{N}$ in the algebras $R[\mathfrak{X}'']$ and $R[\mathfrak{X}']$, respectively.)
Since
$$
R[\mathfrak{X}']/I' \simeq (R[\mathfrak{X}'']/I'')[p\text{-th roots}],
$$
we have $I'' = I'\cap R[\mathfrak{X}'']$, $\fpv{N}{R}' = \fpv{N}{R}'' \subset R[\mathfrak{X}'']$ and thus obtain
\begin{equation*}
\sum_{1 \leq e_{j}\leq N} R g_{j}^{p^{N-e_{j}}} = \fpv{N}{R}'\cap I'.
\end{equation*}
Now let $\varphi: R[\mathfrak{X}']\to R[\mathfrak{X}]$ be the canonical splitting of the quotient map $R[\mathfrak{X}]\to R[\mathfrak{X}']$. Then $I \subset \varphi(I')\oplus\sum_{m\neq 1}mR[\mathfrak{X}]$, where the sum runs over all monomials $m$ in variables in $\mathfrak{X}-\mathfrak{X}'$ not equal to 1. Since similarly $\fpv{N}{R} = \varphi(\fpv{N}{R}')\oplus \bigoplus_{x\in \mathfrak{X}-\mathfrak{X}'} Rx^{p^{N}}$, we have
$$
\fpv{N}{R} \cap I \subset \varphi(\fpv{N}{R}' \cap I')\oplus\bigoplus_{x\in \mathfrak{X}-\mathfrak{X}'} Rx^{p^{N}} = \sum_{e_{j}\leq N} R f_{j}^{p^{N-e_{j}}}.
$$
The opposite inclusion is trivial.
\end{proof}



By a similar induction argument we prove

\begin{lem}\label{lemCuttedFlatness}
Assume that the twisted-linear ideal $I\subset R[\mathfrak{X}]$ is generated in degrees $d\leq p^{n}$ and that the graded components $(R[\mathfrak{X}]/I)_{d}$ are flat over $R$ for $d\leq p^{n}$. Then $R[\mathfrak{X}]/I$ is $R$-flat.
\end{lem}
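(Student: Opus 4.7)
The strategy is to mirror the inductive argument used in Lemma \ref{lemIntersections}: induct on $n$, where $p^n$ is the maximal degree of a generator of the twisted-linear ideal $I$.

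\textbf{Base case $n=0$.} Here $I$ is generated by twisted-linear elements of degree $1$, that is, $R$-linear combinations of the degree-$1$ variables. Since $(R[\mathfrak{X}]/I)_{1}$ is flat and finitely presented, it is locally free; after localizing on $R$ and performing an invertible linear change of the degree-$1$ variables we may assume the generators of $I$ are themselves variables of $\mathfrak{X}$. Then $R[\mathfrak{X}]/I$ is a polynomial ring over $R$, hence flat. A key point, already exploited in Lemma \ref{lemIntersections}, is that a linear change among the degree-$1$ variables preserves twisted linearity, because $\bigl(\sum a_{ij}x_{j}\bigr)^{p^{l}}=\sum a_{ij}^{p^{l}}x_{j}^{p^{l}}$ in characteristic $p$.

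\textbf{Inductive step.} Using the same reduction, we may assume the degree-$1$ generators of $I$ are themselves variables $\mathfrak{Y}\subset\mathfrak{X}$. Quotienting them out gives an isomorphism $R[\mathfrak{X}]/I\simeq R[\mathfrak{X}']/I'$ with $\mathfrak{X}'=\mathfrak{X}\setminus\mathfrak{Y}$, where $I'$ is twisted-linear and generated in degrees $\{p,p^{2},\dots,p^{n}\}$. Form $\mathfrak{X}'' = \{x\in\mathfrak{X}'\mid \deg x>1\}\cup\{x^{p}\mid x\in\mathfrak{X}', \deg x=1\}$, treating each such $x^{p}$ as a new variable of degree $p$. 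Since no degree-$1$ variable of $\mathfrak{X}'$ appears linearly in a generator of $I'$ (the only twisted-linear monomials in degree $p^{l}\geq p$ involving a degree-$1$ variable are $p$-th powers of it), each generator of $I'$ lies in $R[\mathfrak{X}'']$ and generates an ideal $I''\subset R[\mathfrak{X}'']$ with
\[
R[\mathfrak{X}']/I' \;\simeq\; (R[\mathfrak{X}'']/I'')[p\text{-th roots of the new variables}].
\]
This extension is faithfully flat, so $R$-flatness of $R[\mathfrak{X}']/I'$ is equivalent to $R$-flatness of $R[\mathfrak{X}'']/I''$.

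Rescaling the grading of $R[\mathfrak{X}'']$ by dividing all degrees by $p$, every variable acquires degree $\geq 1$, $I''$ remains twisted-linear, and its generators now sit in degrees $\leq p^{n-1}$. The flatness hypothesis transfers cleanly: in the original grading, $(R[\mathfrak{X}'']/I'')_{d}$ is a direct summand of $(R[\mathfrak{X}']/I')_{d}\simeq (R[\mathfrak{X}]/I)_{d}$, hence flat for $d\leq p^{n}$, which in the rescaled grading means flat in every degree $\leq p^{n-1}$. The induction hypothesis then delivers flatness of $R[\mathfrak{X}'']/I''$, and unwinding the reductions gives flatness of $R[\mathfrak{X}]/I$.

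\textbf{Anticipated obstacle.} Everything technical has already been handled in Lemma \ref{lemIntersections}; the one point requiring care here is purely bookkeeping, namely verifying that (i) the linear change of degree-$1$ variables preserves twisted linearity (which uses the Frobenius identity above), and (ii) the hypothesized flatness up to degree $p^{n}$ in the $R[\mathfrak{X}']$-grading correctly translates into flatness up to degree $p^{n-1}$ in the rescaled $R[\mathfrak{X}'']$-grading. Both reduce to the observation that the decomposition $R[\mathfrak{X}']=\bigoplus_{\alpha}R[\mathfrak{X}'']\cdot x^{\alpha}$ (where $\alpha$ runs over multi-exponents in the $p$-th roots with components $0,\dots,p-1$) is a graded free decomposition as $R[\mathfrak{X}'']$-modules, exactly as in the proof of Lemma \ref{lemIntersections}.
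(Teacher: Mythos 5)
Your proof is correct and follows essentially the same line as the paper's own argument: induct on $n$, localize and change variables so the degree-$1$ part of $I$ is generated by variables, pass to $\mathfrak{X}'$ and then to $\mathfrak{X}''$ via the adjunction of $p$-th roots (which is faithfully flat), rescale degrees by $p$, and apply the induction hypothesis. The extra bookkeeping you supply (the Frobenius identity justifying that a linear change of degree-$1$ variables preserves twisted linearity, and the degree-translation when rescaling) is exactly the content the paper leaves implicit.
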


\begin{proof}
We proceed by induction on $n$. Since the graded components of $R[\mathfrak{X}]/I$ are of finite presentation, $(R[\mathfrak{X}]/I)_{d}$ is even locally free for $d\leq p^{n}$. Thus, since our claim is local on $R$, we can assume that $R[\mathfrak{X}]/\langle I_{1}\rangle$ is isomorphic to a polynomial ring $R[\mathfrak{X}']$, with $\mathfrak{X}'\subset \mathfrak{X}$. In case $n=0$ we are already done. Otherwise, the image of $I/\langle I_{1}\rangle$ in $R[\mathfrak{X}']$ is a twisted-linear ideal $I'\subset R[\mathfrak{X}']$, generated by the images $g_{j}$ of those generators of $I$ which have degree $\geq p$. Set $\mathfrak{X}'' = \lbrace x\in\mathfrak{X'}; \deg x>1\rbrace \cup \lbrace x^{p}\in \mathfrak{X}'; \deg x=1\rbrace$. Then the $g_{j}$ of degree $>1$ lie in $R[\mathfrak{X}'']$ and generate an ideal $I''\subset R[\mathfrak{X}'']$. We have
$$
R[\mathfrak{X}]/I \simeq R[\mathfrak{X}']/I' \simeq (R[\mathfrak{X}'']/I'')[p\text{-th roots of some variables in }\mathfrak{X}''].
$$
Still, $I''$ is twisted-linear and $R[\mathfrak{X}'']/I''$ is $R$-flat in degrees $\leq p^{n}$, since adjoining $p$-th roots is faithfully flat. In $\mathfrak{X}''$ there only occur variables of degrees $p^{e}$ with $e\geq 1$, so we can divide all degrees by $p$ and arrive at a situation where we can use the induction hypotheses: $R[\mathfrak{X}'']/I''$ is $R$-flat in \emph{all} degrees. But then so is $R[\mathfrak{X'}]/I' \simeq R[\mathfrak{X}]/I$.
\end{proof}

\begin{rem}\label{remCuttedFlatness}
\begin{enumerate}
\item For any ideal $I\subset R[\mathfrak{X}]$, we denote by $I^{\leq n}$ the ideal generated by the graded components of $I$ which have degree $\leq n$. Then, if $R[\mathfrak{X}]/I$ is $R$-flat and $I$ is twisted-linear, lemma \ref{lemCuttedFlatness} tells us that the same properties hold for $R[\mathfrak{X}]/I^{\leq p^{n}}$. In particular, we obtain from corollary \ref{corFlatness} that $R[\mathfrak{X}]_{p^{n}}/(\fpv{n}{R} + (I^{\leq p^{m}})_{p^{n}})$ is $R$-flat for all $m$ and $n$.
\item Let $I\subset R[\mathfrak{X}]$ be a twisted-linear ideal generated in degrees $\leq p^{n}$. For the quotient $R[\mathfrak{X}]/I$ to be $R$-flat it is sufficient that $(R[\mathfrak{X}]/I)_{p^{i}}$ is $R$-flat for $i=0,\dotsc,n$. This follows from lemma \ref{lemCuttedFlatness} by induction on $n$.
\end{enumerate}
\end{rem}

Combining these two remarks, we obtain a stronger version of the preceding lemma:

\begin{cor}\label{corStrongFlatness}
 Assume that the twisted-linear ideal $I\subset R[\mathfrak{X}]$ is generated in degrees $d\leq p^{n}$. Then $R[\mathfrak{X}]/I$ is $R$-flat if and only if the $R$-modules $\fpv{m}{R}/(\fpv{m}{R}\cap I)$ are flat for $m=0,\dotsc,n$.
\end{cor}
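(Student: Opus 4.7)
The forward implication is immediate: if $R[\mathfrak{X}]/I$ is $R$-flat, then Corollary~\ref{corFlatness} gives $R$-flatness of $\fpv{m}{R}/(\fpv{m}{R}\cap I)$ for \emph{every} $m$, and in particular for $m=0,\dotsc,n$.

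For the reverse implication, I would first invoke Remark~\ref{remCuttedFlatness}(2) to reduce the problem to showing that the graded components $(R[\mathfrak{X}]/I)_{p^i}$ are $R$-flat for $i=0,\dotsc,n$. I would then proceed by induction on $n$, the maximum degree of a generator of $I$. The base case $n=0$ is essentially trivial: here $I=\langle I_{1}\rangle$ with $I_{1}=\fpv{0}{R}\cap I$ an $R$-flat direct summand of $\fpv{0}{R}$ (by the assumption for $m=0$), so $R[\mathfrak{X}]/I$ is Zariski-locally on $R$ a polynomial ring, hence flat.

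For the inductive step, I would look at the truncated ideal $I^{\leq p^{n-1}}$. Since any generator of $I$ of degree $p^{m}$ with $m\leq n-1$ already lies in $I^{\leq p^{n-1}}$, a direct check shows $\fpv{m}{R}\cap I^{\leq p^{n-1}}=\fpv{m}{R}\cap I$ for every $m\leq n-1$, so the inductive hypothesis applies to $I^{\leq p^{n-1}}$ and yields that $R[\mathfrak{X}]/I^{\leq p^{n-1}}$ is $R$-flat. It remains to handle the top degree $p^{n}$. For this, consider the exact sequence
$$0\to \fpv{n}{R}/(\fpv{n}{R}\cap I)\to (R[\mathfrak{X}]/I)_{p^{n}}\to R[\mathfrak{X}]_{p^{n}}/(\fpv{n}{R}+I_{p^{n}})\to 0.$$
The left-hand module is $R$-flat by hypothesis. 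The new degree-$p^{n}$ generators of $I$ (beyond $I^{\leq p^{n-1}}$) are twisted-linear and hence already belong to $\fpv{n}{R}$; consequently $\fpv{n}{R}+I_{p^{n}}=\fpv{n}{R}+(I^{\leq p^{n-1}})_{p^{n}}$, and the right-hand module coincides with $R[\mathfrak{X}]_{p^{n}}/(\fpv{n}{R}+(I^{\leq p^{n-1}})_{p^{n}})$, which is $R$-flat by Remark~\ref{remCuttedFlatness}(1) applied to the already established flat quotient $R[\mathfrak{X}]/I^{\leq p^{n-1}}$. Flatness of the middle term then follows from the long exact $\Tor$-sequence, completing the induction.

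The main obstacle I anticipate is the bookkeeping around the interaction between $I$ and $I^{\leq p^{n-1}}$: one must see both that the flatness hypothesis transfers from $I$ to $I^{\leq p^{n-1}}$ in degrees $\leq n-1$ (allowing the induction to start), and that passing from $I^{\leq p^{n-1}}$ to $I$ in top degree does not change the ``non twisted-linear'' quotient (so Remark~\ref{remCuttedFlatness}(1) may be applied). Once these two compatibilities are recorded, the short exact sequence does all the work.
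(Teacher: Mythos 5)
Your proposal is correct and follows essentially the same route as the paper: both directions are handled identically (forward via Corollary~\ref{corFlatness}, reverse by induction on $n$ using the short exact sequence in degree $p^{n}$ together with Remark~\ref{remCuttedFlatness}). The two ``bookkeeping'' points you flag — that $\fpv{m}{R}\cap I^{\leq p^{n-1}}=\fpv{m}{R}\cap I$ for $m\leq n-1$, and that $\fpv{n}{R}+I_{p^{n}}=\fpv{n}{R}+(I^{\leq p^{n-1}})_{p^{n}}$ by twisted-linearity — are indeed the only points the paper leaves implicit, and you verify them correctly.
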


\begin{proof}
The `only if`-part was proved in corollary \ref{corFlatness}. For the `if`-part, we proceed by induction on $n$, the case $n=0$ being a consequence of lemma \ref{lemCuttedFlatness}, since $\fpv{0}{R}/(\fpv{0}{R}\cap I) = (R[\mathfrak{X}]/I)_{1}$. To verify the statement for $n>0$ assume $R[\mathfrak{X}]/I^{\leq p^{n-1}}$ is $R$-flat, and consider the exact sequence
$$
0 \to \fpv{n}{R}/(\fpv{n}{R}\cap I) \to R[\mathfrak{X}]_{p^{n}}/I_{p^{n}} \to R[\mathfrak{X}]_{p^{n}}/(\fpv{n}{R} + (I^{\leq p^{n-1}})_{p^{n}}) \to 0.
$$
Since the right-hand module is $R$-flat by corollary \ref{corFlatness}, and the left-hand module is $R$-flat by hypotheses, the same holds for the middle module. Now use (2) of remark \ref{remCuttedFlatness}.
\end{proof}

We are now prepared to show that twisted-linearity is a `closed condition` on the Hilbert scheme $H_{k}^{h}$. In order to give this statement a precise meaning and a neat formulation, let's sheafify the notion of twisted-linearity:

\begin{defn}
Let $X$ be any $k$-scheme, and let $\mathcal{I}\subset \mathcal{O}_{X}[\mathfrak{X}]$ be a sheaf of ideals. We say $\mathcal{I}$ is twisted-linear, if for every open affine subscheme $Y=\Spec R$ of $X$, $\Gamma(Y,\mathcal{I})\subset R[\mathfrak{X}]$ is a twisted-linear ideal.
\end{defn}

\begin{rem}\label{remTLOnAffines}
 Twisted linearity can be tested on an affine open covering: A sheaf of ideals $\mathcal{I}$ as in the previous definition is twisted-linear, if and only if there exists a covering by open affines, $X=\cup Y_{i}, Y_{i}=\Spec R_{i}$, such that every $I_{i}=\Gamma(Y_{i},\mathcal{I})\subset R_{i}[\mathfrak{X}]$ is a twisted-linear ideal. The `only if`-part is of course trivial. To verify the `if`-part, we can assume that $X=\Spec R$ is affine. Then $I=\Gamma(X,\mathcal{I})\subset R[\mathfrak{X}]$ is twisted-linear if and only if
$$
\langle I\cap \fpv{n}{R}, n\in \mathbb{N} \rangle = I.
$$
(That the property of being finitely generated, which we require for twisted-linear ideals, is local for the Zariski-topology on $\Spec R$, even for the faithfully flat topology, is a well known fact.)
By flatness of $R\to R_{i}$, this equation holds, if and only if it holds after localizing in $R_{i}$ for every $i$. But, using corollary \ref{corIntersections}, this is twisted-linearity on the $\Spec R_{i}$, which was our assumption.
\end{rem}

\begin{prop}\label{propTLClosed}
Let $p: X\to H_{k}^{h}$ be a morphism of $k$-schemes such that the corresponding sheaf of ideals on $X$ is twisted-linear. Assume that $p_{*}\mathcal{O}_{X}$ is a quasi-coherent $\mathcal{O}_{H^{h}_{k}}$-module and let $Y$ be the scheme-theoretic image of $p$. Then the corresponding sheaf of ideals $\mathcal{I}\subset \mathcal{O}_{Y}[\mathfrak{X}]$ is twisted-linear.
\end{prop}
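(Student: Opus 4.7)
The plan is to reduce to an affine situation and then transfer twisted-linearity from the $X$-side to the $Y$-side, using flatness of the Hilbert-scheme quotient combined with the explicit monomial description of $\fpv{N}{-}$. By remark \ref{remTLOnAffines}, it suffices to verify twisted-linearity on an affine open cover, so I may assume $H_{k}^{h}$ is replaced by an affine open $\Spec C$, $X = \Spec A$ affine, and $p$ corresponds to $\varphi: C \to A$; then $Y = \Spec B$ with $B = C/\ker\varphi$, giving an injection $B \hookrightarrow A$. Let $J \subset B[\mathfrak{X}]$ and $I \subset A[\mathfrak{X}]$ be the pullbacks of the universal ideal. Admissibility forces each $(B[\mathfrak{X}]/J)_d$ to be $B$-flat, so $I = J \otimes_B A$ graded-component-wise and the induced map $(B[\mathfrak{X}]/J)_d \hookrightarrow (A[\mathfrak{X}]/I)_d$ is injective; in particular $J_d = I_d \cap B[\mathfrak{X}]_d$. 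Since $\fpv{N}{A} = \fpv{N}{B} \otimes_B A$ is the $A$-span of the same explicit set of monomials that span $\fpv{N}{B}$ over $B$, one has $\fpv{N}{A} \cap B[\mathfrak{X}]_{p^N} = \fpv{N}{B}$, and combining these yields $L_N := \fpv{N}{B} \cap J = (\fpv{N}{A} \cap I) \cap B[\mathfrak{X}]$.

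Setting $\tilde{J} := \langle \bigcup_N L_N \rangle \subset J$, the proposition amounts to the identity $\tilde{J} = J$. The technical heart is the descent claim $L_N \cdot A = \fpv{N}{A} \cap I$, the inclusion $\subseteq$ being immediate. For the reverse, corollary \ref{corFlatness} applied to the twisted-linear ideal $I$ shows that $\fpv{N}{A}/(\fpv{N}{A} \cap I)$ is $A$-locally free, so $\fpv{N}{A} \cap I$ is a direct summand of the free $A$-module $\fpv{N}{A} = \fpv{N}{B} \otimes_B A$. By a dévissage in the spirit of lemma \ref{lemIntersections} -- induction on the maximum degree of a twisted-linear generator, trivializing the degree-$1$ part and passing to $p$-th roots of the remaining variables -- one shows that a system of twisted-linear generators of $I$ can be chosen inside $B[\mathfrak{X}]$, which gives the desired generation $\fpv{N}{A} \cap I = L_N \cdot A$.

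Granting the descent, the twisted-linearity of $I$ yields $I = \langle \bigcup_N (\fpv{N}{A} \cap I) \rangle = \tilde{J} \cdot A[\mathfrak{X}]$, hence $(J/\tilde{J}) \otimes_B A = 0$ in every graded degree. To conclude $\tilde{J} = J$, I check that $(J/\tilde{J})_d$ is $B$-flat in each graded degree: this follows from corollary \ref{corFlatness} combined with the descent identity above, which makes $L_N$ a split inclusion into $\fpv{N}{B}$ and thereby propagates $B$-flatness through the graded pieces of $\tilde{J}$. Once $J/\tilde{J}$ is $B$-flat, the injectivity of $B \hookrightarrow A$ gives $J/\tilde{J} \hookrightarrow (J/\tilde{J}) \otimes_B A = 0$, so $\tilde{J} = J$.

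The main obstacle is the descent identity $L_N \cdot A = \fpv{N}{A} \cap I$: this is where the scheme-theoretic-image hypothesis (encoded as $B \hookrightarrow A$) is leveraged together with the explicit monomial structure of $\fpv{N}{-}$. Without this identity one only obtains $(J/\tilde{J}) \otimes_B A = 0$, which is insufficient under a merely injective ring map; the dévissage parallels the proofs of the earlier lemmas in this section and propagates the $A$-flatness of the twisted-linear quotient back across $B \hookrightarrow A$.
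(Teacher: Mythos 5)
Your reduction to $X=\Spec A$ affine is not justified by the hypotheses: the proposition only assumes $p_{*}\mathcal{O}_{X}$ quasi-coherent, and the paper handles the general case by mapping $R \hookrightarrow S := \prod S_{i}$ where the $\Spec S_{i}$ cover $f^{-1}(\Spec R)$; this is then used in an essential way via the isomorphism $M\otimes_{R}S \simeq \prod (M\otimes_{R}S_{i})$ for $R$-projective $M$ of finite rank. That gap is patchable, but the central gap is not: your ``descent claim'' $L_{N}\cdot A = \fpv{N}{A}\cap I$ is circular. You justify it by asserting that ``one shows that a system of twisted-linear generators of $I$ can be chosen inside $B[\mathfrak{X}]$ by a d\'evissage in the spirit of lemma \ref{lemIntersections}.'' But in the presence of the facts you've already established ($J_d = I_d\cap B[\mathfrak{X}]_d$), this assertion is equivalent to the twisted-linearity of $J$, which is exactly what you are trying to prove. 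The d\'evissage in lemmas \ref{lemIntersections} and \ref{lemCuttedFlatness} operates over a fixed base ring and proves identities for an ideal already known to be twisted-linear; it is not set up to transfer twisted-linear generators from $A$ down to the non-flat subring $B$.

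Similarly, the assertion that $J/\tilde{J}$ is $B$-flat is unsupported. You invoke corollary \ref{corFlatness}, but that corollary has twisted-linearity of the ideal as a hypothesis — again, the conclusion we want. Even granting the descent identity, the fact that $L_{N}\cdot A$ is a direct summand of $\fpv{N}{A}$ does not descend to $L_{N}$ being a direct summand of $\fpv{N}{B}$, because $B\hookrightarrow A$ is merely injective, not faithfully flat.

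The paper's proof avoids both problems by inverting the logic. It inductively builds twisted-linear generators of the ideal $I$ on $Y$ directly: assuming $I^{\leq p^{n-1}}$ is twisted-linear, it uses lemma \ref{lemCuttedFlatness} and remark \ref{remCuttedFlatness} to conclude that $M := R[\mathfrak{X}]_{p^{n}}/(\fpv{n}{R} + (I^{\leq p^{n-1}})_{p^{n}})$ is $R$-projective (this is the place where the \emph{inductive hypothesis}, not the unknown twisted-linearity of $I$ itself, supplies the flatness input). Then, since the ideals $J_{i} = I\otimes_{R}S_{i}$ on $X$ are twisted-linear, the composite $I_{p^{n}}\to M\to M\otimes_{R}S_{i}$ vanishes for each $i$; projectivity of $M$ together with $R\hookrightarrow S$ forces the map $I_{p^{n}}\to M$ itself to vanish. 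This yields $I_{p^{n}}\subset \fpv{n}{R}+(I^{\leq p^{n-1}})_{p^{n}}$, which is precisely the statement that new degree-$p^{n}$ generators can be taken twisted-linear. Your write-up skips this vanishing argument entirely, and it is the whole point of the proof.
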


\begin{proof}
The defining ideal-sheaf of $Y\subset H^{h}_{k}$ is equal to $\ker(\mathcal{O}_{H}\to p_{*}\mathcal{O}_{X})$. Hence to the map $f: X\to Y$ corresponds an \emph{injective} map of sheaves $\mathcal{O}_{Y}\to f_{*}\mathcal{O}_{X}$ and, in particular, on an affine open subset $\Spec R\subset Y$ we get $R\hookrightarrow \mathcal{O}_{X}(f^{-1}(\Spec R))$. Covering $f^{-1}(\Spec R)=\cup \Spec S_{i}$ by open affines, we obtain an injective map of rings
$$
R\hookrightarrow \mathcal{O}_{X}(f^{-1}(\Spec R)) \hookrightarrow \prod S_{i} =: S.
$$
By assumption, the ideal $J_{i}$ corresponding to $\Spec S_{i}\hookrightarrow f^{-1}(\Spec R) \to X \to H_{k}^{h}$ is twisted-linear. We have to verify that the same holds for the ideal $I$ corresponding to $\Spec R \hookrightarrow Y \to H_{k}^{h}$. Then the claim follows from the remark, as $Y$ can be covered by open affines like $\Spec R$.

We inductively construct a generating system for $I$ consisting of twisted-linear elements. First note that any element of degree 1 is trivially twisted-linear, i.e. $I^{\leq 1}$ is twisted linear. For the inductive step, assume that $I^{\leq p^{n-1}}$ is twisted linear. Then, by the remarks following lemma \ref{lemCuttedFlatness}, $M := R[\mathfrak{X}]_{p^{n}}/(\fpv{n}{R} + (I^{\leq p^{n-1}})_{p^{n}})$ is projective over $R$, whence, in the line below, the middle map is injective:
\begin{equation}\label{eqnItoMS}
I_{p^{n}} \to M \hookrightarrow M\otimes_{R}S \xrightarrow{\simeq} \prod (M\otimes_{R}S_{i}).
\end{equation}
To justify that the right hand map is an isomorphism, note that, if $M$ is even free, it is a finite product of copies of $R$, and the above map is indeed an isomorphism since arbitrary products commute. In general, $M$ is a direct summand (and hence also a direct factor) of a finitely generated free $R$-module $M\oplus M'$. This shows that $(M\otimes_{R}S)\oplus (M'\otimes_{R}S) \simeq (\prod M\otimes_{R} S_{i})\oplus (\prod M'\otimes_{R} S_{i})$.
Since the composition of the maps in \eqref{eqnItoMS} is zero ($I\otimes_{R}S_{i}=J_{i}$ being twisted-linear), so is the left-hand map. This shows that, in order to obtain a generating system of $I^{\leq p^{n}}$, we can extend a generating system of $I^{\leq p^{n-1}}$ by twisted-linear elements living in $\fpv{n}{R}$. This proves the inductive step, whence $I$ is twisted-linear.
\end{proof}

\begin{cor}\label{corClosure}
The sheaf of ideals corresponding to $\mathcal{D}(\lambda)\subset H_{k}^{h}$ is twisted-linear.
\end{cor}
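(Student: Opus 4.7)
The strategy is to apply Proposition~\ref{propTLClosed} to the orbit morphism of the standard lattice scheme $V(\lambda)$. Let $X := \Loop^{F,\geq 0}_{N}G$; the action of $\Loop^{F,\geq 0}_{N}G$ on $\mathcal{M}^{(N),h}_{n}\subset H^{h}_{k}$, evaluated at the $k$-point $V(\lambda)$, defines a morphism $p \colon X \to H^{h}_{k}$ whose set-theoretic image is the orbit $\mathcal{O}(\lambda)$. The plan is to show that the scheme-theoretic image of $p$ coincides with $\mathcal{D}(\lambda)$, and that the pullback of the universal ideal via $p$ is twisted-linear; then Proposition~\ref{propTLClosed} yields the claim directly.

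For the first point, $X$ is the Weil restriction of the smooth group scheme $G\times_{k}\Spec k[[z]]^{F}_{N}$ along the finite flat morphism $\Spec k[[z]]^{F}_{N}\to \Spec k$, and hence is itself smooth and in particular reduced. A morphism from a reduced scheme to a separated scheme has scheme-theoretic image equal to the reduced induced structure on the closure of its set-theoretic image, so in our situation this image is exactly $\mathcal{D}(\lambda)$. Moreover, $X$ is affine, so $p$ is quasi-compact and quasi-separated, whence $p_{*}\mathcal{O}_{X}$ is quasi-coherent as required.

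To check twisted-linearity of the pullback ideal, write $X=\Spec A$; the universal element is a matrix $g=(g_{r,s})$ with $g_{r,s}=\sum_{i=0}^{N-1}g_{r,s,i}z^{i}\in A[[z]]^{F}_{N}$. Setting $\mathbf{x}=(x_{1},\dots,x_{n})^{\mathrm{t}}$ with $x_{s}=\sum_{j}x_{s,j}z^{j}$, the pullback ideal is generated by the $z^{j}$-coefficients of the first $n-1$ components of $g\cdot \mathbf{x}$, for $j$ in the range specified by $I(\lambda)$. A direct application of \eqref{eqnMultTwisted} gives
\[
(g_{r,s}\cdot x_{s})\big|_{z^{j}} \;=\; \sum_{i+k=j} g_{r,s,i}^{\,p^{k}}\, x_{s,k}^{\,p^{i}},
\]
each summand being an element of $A$ times $x_{s,k}^{\,p^{j-k}}\in \fpv{j}{A}$. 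Summing over $s$ preserves membership in $\fpv{j}{A}$, so the generators of the pullback ideal are twisted-linear elements, and the ideal is twisted-linear by definition.

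Both hypotheses of Proposition~\ref{propTLClosed} are thereby verified, yielding that the sheaf of ideals cutting out $\mathcal{D}(\lambda)\subset H^{h}_{k}$ is twisted-linear. The entire conceptual content of the argument is the fact that the $\Loop^{F,\geq 0}G$-action preserves twisted-linearity, which is immediate from the explicit twisted multiplication formula; the only mildly delicate point is the identification of $\mathcal{D}(\lambda)$ with the scheme-theoretic image of $p$, which is handled by the reducedness of $X$.
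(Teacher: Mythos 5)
Your proof is correct and follows essentially the same route as the paper: both apply Proposition~\ref{propTLClosed} after observing that $I(\lambda)$ is twisted-linear and the $\Loop^{F,\geq 0}\Sl_{n}$-action preserves twisted-linearity. The only (inessential) variation is that you take $p$ to be the orbit morphism from the affine, smooth group scheme $\Loop^{F,\geq 0}_{N}G$, whereas the paper takes $p$ to be the locally closed immersion $\mathcal{O}(\lambda)\hookrightarrow H_{k}^{h}$; your choice makes the quasi-coherence of $p_{*}\mathcal{O}_{X}$ and the identification of the scheme-theoretic image especially transparent.
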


\begin{proof}
The ideal $I(\lambda)$ (whose orbit is by definition $\mathcal{O}(\lambda)$) is twisted-linear. By definition of the ring operations in $R[[z]]^{F}$, the operation of $\Loop^{F,\geq 0}\Sl_{n}$ on $H_{k}^{h}$ preserves twisted-linearity, whence the sheaf of ideals corresponding to the inclusion $p: \mathcal{O}(\lambda)\subset H_{k}^{h}$ is twisted-linear. Now the claim follows from proposition \ref{propTLClosed}, since $\mathcal{D}(\lambda)$ is the closure (=scheme-theoretic image) of $\mathcal{O}(\lambda)$ in $H_{k}^{h}$.
\end{proof}


\section{The structure of $\mathcal{D}(\lambda)$}\label{sectionIsomorphismTheorem}
\subsection{An iterated bundle of ordinary Grassmannians}\label{sectionGrassmannBundle}

From now on, we denote $\mathfrak{X}=\lbrace x_{i,j}; i=1,\dotsc,n; j=0,\dotsc,N-1\rbrace$, i.e. $R[\mathfrak{X}]$ is the affine coordinate ring of $\Aff_{R}^{nN}$ with the grading $\deg x_{i,j}=p^{j}$, introduced in section \ref{sectionPowerSeriesRings}.

In the previous section we introduced the ideals $I^{\leq p^{i}}, i=0,1,\dotsc$, associated with a twisted linear admissible ideal $I\subset R[\mathfrak{X}]$. If we consider ideals $I$ with the property that $z^{\#}I^{\leq p^{i}}\subset I^{\leq p^{i-1}}$ (which is the case for $R$-valued points of $\mathcal{D}(\lambda)$), the assignments $I\to I^{\leq p^{i}}$ have a nice geometric interpretation, which we are going to describe in the sequel.


Denote by $h_{m}$ the Hilbert function of $k[\mathfrak{X}]/I(\lambda)^{\leq p^{m-1}}$ for $m=0,\dotsc,N$. In particular, $h_{0}$ is the Hilbert function of $k[\mathfrak{X}]$ (namely, $I(\lambda)$ contains no elements of degree $\leq p^{-1}$ whence $I^{\leq p^{-1}}=0$), and $h_{N}=h$, the Hilbert function of $k[\mathfrak{X}]/I(\lambda)$. For $m=0,\dotsc,N$ denote by $\mathcal{T}_{m}(\lambda)$ the following (set)-valued functor on the category of $k$-algebras:
\begin{multline}\label{eqnDefTm}
\mathcal{T}_{m}(\lambda)(R) = \lbrace \text{admissible twisted linear ideals }I\subset R[\mathfrak{X}]\\ \text{ with Hilbert function } h_{m} \text{ and such that } z^{\#}I^{\leq i} \subset I^{\leq i-1} \rbrace.
\end{multline}
Note that these functors are sheaves for the Zariski-topology by remark \ref{remTLOnAffines}. Note further that $\mathcal{T}_{0}(\lambda)=\Spec k$ and $\mathcal{D}(\lambda)\hookrightarrow \mathcal{T}_{N}(\lambda)$ as functors. Moreover, we have morphisms of functors
$$
\mathcal{T}_{m+1}(\lambda) \to \mathcal{T}_{m}(\lambda); \quad I\mapsto I^{\leq p^{m-1}},
$$
once we know that the Hilbert function of $R[\mathfrak{X}]/I$, for $I$ admissible, twisted linear and generated in degrees $\leq p^{m-1}$, is fully determined by its values on $\lbrace 0,\dotsc,p^{m-1} \rbrace$. But this follows from

\begin{prop}\label{propHF}
 The Hilbert function of an admissible twisted linear ideal $I\subset R[\mathfrak{X}]$, which is generated in degrees $\leq p^{m}$, determines the ranks of the $R$-modules $\fpv{0}{R}\cap I,\dotsc,\fpv{m}{R}\cap I$ and vice versa.
\end{prop}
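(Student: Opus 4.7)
The plan is to proceed by induction on $m$, establishing an explicit (and invertible) correspondence between the Hilbert function of $R[\mathfrak{X}]/I$ and the tuple $(r_{0},\dotsc,r_{m})$, where $r_{l} := \rk(\fpv{l}{R}\cap I)$. The two main tools will be the filtration $I^{\leq p^{0}}\subseteq I^{\leq p^{1}}\subseteq \dotsb \subseteq I^{\leq p^{m}}=I$ together with Lemma \ref{lemIntersections} (which describes each $\fpv{l}{R}\cap I$ as an $R$-span of Frobenius twists of generators), and the exact sequence furnished by Corollary \ref{corFlatness} in degree $p^{m}$.

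For the base case $m=0$, the ideal is $I=R[\mathfrak{X}]\cdot(\fpv{0}{R}\cap I)$ with $\fpv{0}{R}\cap I$ locally free of rank $r_{0}$, and $R[\mathfrak{X}]/I$ is locally a polynomial algebra whose Hilbert function in each degree is a standard combinatorial expression in $r_{0}$ and the fixed degree profile of $\mathfrak{X}$; conversely $r_{0}$ is recovered from $h(1)$. For the inductive step, I would compare $I$ with $I^{\leq p^{m-1}}$. By Lemma \ref{lemIntersections}, $\fpv{j}{R}\cap I^{\leq p^{m-1}}=\fpv{j}{R}\cap I$ for $j\leq m-1$, so the two ideals share the data $(r_{0},\dotsc,r_{m-1})$, which by induction is equivalent to the Hilbert function of $R[\mathfrak{X}]/I^{\leq p^{m-1}}$. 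Moreover, $I$ and $I^{\leq p^{m-1}}$ coincide in all degrees $<p^{m}$, since any element of $I$ in such a degree can only involve generators of degree $\leq p^{m-1}$; consequently $h_{I}$ agrees with $h_{I^{\leq p^{m-1}}}$ in those degrees.

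The single degree where new data enter is $p^{m}$, where I would apply the exact sequence of Corollary \ref{corFlatness},
\begin{equation*}
0 \to \fpv{m}{R}/(\fpv{m}{R}\cap I) \to R[\mathfrak{X}]_{p^{m}}/I_{p^{m}} \to R[\mathfrak{X}]_{p^{m}}/(\fpv{m}{R}+I_{p^{m}}) \to 0.
\end{equation*}
The left-hand term has rank $N_{m}-r_{m}$, where $N_{m}=\rk\fpv{m}{R}$; the right-hand term depends only on the image of $I^{\leq p^{m-1}}_{p^{m}}$ modulo $\fpv{m}{R}$, and its rank is obtained by applying the same exact sequence to $I^{\leq p^{m-1}}$ and invoking the inductive hypothesis. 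Solving then produces a bijection between $h_{I}(p^{m})$ and $r_{m}$, given $(r_{0},\dotsc,r_{m-1})$. For degrees $d>p^{m}$, the full tuple $(r_{0},\dotsc,r_{m})$ together with Lemma \ref{lemCuttedFlatness} and Corollary \ref{corStrongFlatness} determines every $h_{I}(d)$ recursively, since the ideal is then pinned down up to the ambiguity absorbed by the flatness and twisted-linearity conditions.

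The main obstacle will be justifying that the right-hand term of the above exact sequence really depends only on $(r_{0},\dotsc,r_{m-1})$ and not on the specific choice of degree-$p^{m}$ generators. The crucial observation is that $\fpv{m}{R}\cap I^{\leq p^{m-1}}$ is, by Lemma \ref{lemIntersections}, the $R$-span of the Frobenius twists $f_{j}^{p^{m-e_{j}}}$ of the lower-degree generators; consequently, once we quotient out $\fpv{m}{R}$, all contributions from $\fpv{m}{R}\cap I$ vanish and what remains is entirely controlled by the multiplications $R[\mathfrak{X}]_{p^{m}-p^{e}}\cdot(\fpv{e}{R}\cap I)$ for $e<m$. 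This is precisely the input on which the induction hypothesis acts, closing the argument.
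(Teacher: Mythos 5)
Your inductive strategy is genuinely different from the paper's, but it has a real gap precisely at the place where the proposition's content lives.

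The paper does not induct on $m$ at all. Instead, after reducing (via Corollaries \ref{corFlatness} and \ref{corIntersections}) to the case of a field, it degenerates $I$ along the one-parameter subgroup $D(t)=\diag(t^{-n+1},\dotsc,t^{n-1})$, using properness of the multigraded Hilbert scheme to obtain a flat limit $I'$ at $t=0$, and Proposition \ref{propTLClosed} to guarantee that $I'$ is still twisted-linear. The torus-fixed-point argument plus stability under comultiplication by $z$ then force $I'$ to be a \emph{monomial} ideal generated by pure powers $x_{i,j}^{p^{e_{i,j}}}$. For such an ideal the generators form a regular sequence, so the Hilbert function is an explicit combinatorial function of the number of generators in each degree; since Hilbert function and the ranks $\dim(\fpv{i}{k}\cap I)$ are both preserved along the degeneration, the claim follows. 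The regular-sequence fact is what lets the paper control the Hilbert function in \emph{all} degrees at once.

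Your proposal handles the degree $p^{m}$ computation via the exact sequence of Corollary \ref{corFlatness}, which is fine in principle, but there are two places where the argument does not close. First, at degree $p^{m}$ you need the rank of $\fpv{m}{R}\cap I^{\leq p^{m-1}}$ (equivalently, of $R[\mathfrak{X}]_{p^m}/(\fpv{m}{R}+(I^{\leq p^{m-1}})_{p^m})$) to be determined by $(r_{0},\dotsc,r_{m-1})$. That is not literally part of your inductive hypothesis, which only speaks of the Hilbert function of $I^{\leq p^{m-1}}$; applying the same exact sequence to $I^{\leq p^{m-1}}$ as you suggest just relates this rank to the Hilbert function value at $p^m$, which is circular. (One can patch this by a Frobenius-semilinearity argument over a perfect field showing this rank equals $r_{m-1}$, but you would have to supply it.) Second, and more seriously, your treatment of degrees $d>p^{m}$ --- ``the ideal is then pinned down up to the ambiguity absorbed by the flatness and twisted-linearity conditions'' --- is precisely the statement the proposition is asserting, so invoking it there begs the question. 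Two twisted-linear ideals with identical $(r_{0},\dotsc,r_{m})$ are not literally the same ideal, and showing they have the same Hilbert function in high degrees is exactly what requires an argument. This is what the paper's monomial degeneration buys, and what is missing in your write-up.
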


\begin{proof}
 By corollary \ref{corFlatness} the $R$-modules $\fpv{i}{R}/\fpv{i}{R}\cap I$ are flat, whence the statement of the proposition makes sense.
Consider first ideals $I\subset k[\mathfrak{X}]$ having a \emph{monomial} twisted linear generating system that is, a generating system consisting of monomials $\xi_{1},\dotsc,\xi_{l}$, each a power of a variable $x_{i,j}$ and of degree $\leq p^{m}$. Hence, if the $\xi_{i}$ form a minimal generating system of $I$ (i.e. none of them can be omitted), they even form a regular sequence for $I$. Moreover, the number of elements in $\lbrace \xi_{i}\rbrace$ having a given degree is determined by the dimensions of the vector spaces $\fpv{i}{k}\cap I, i=0,\dotsc, m$. On the other hand Hilbert functions are additive with respect to short exact sequences as
$$
0 \to k[\mathfrak{X}]/\langle \xi_{1},\dotsc,\xi_{i-1}\rangle \xrightarrow{\cdot\xi_{i}} k[\mathfrak{X}]/\langle \xi_{1},\dotsc,\xi_{i-1}\rangle \to k[\mathfrak{X}]/\langle \xi_{1},\dotsc,\xi_{i}\rangle \to 0.
$$
Thus the Hilbert function of $R[\mathfrak{X}]/I$ is determined by the dimensions of $\fpv{i}{k}\cap I, i=0,\dotsc, m$, and vice versa, as claimed. Now we show that the general case reduces to the case just studied: By flatness of $R[\mathfrak{X}]/I$ and by corollaries \ref{corFlatness} and \ref{corIntersections} we may assume $R=k$. Consider the action on $H^{h'}_{k}$ (where $h'$ denotes the Hilbert function of $I$) of the 1-parameter subgroup
 $$\lbrace D(t)=\diag(t^{-n+1},t^{-n+3},\dotsc,t^{n-1}); t\in k\rbrace \subset T\subset \Sl_{n}(k).$$
By properness of $H^{h'}_{k}$ the orbit of $I$ extends to a closed curve $C\subset H_{k}^{h'}$, and by proposition \ref{propTLClosed} the ideal $I'$ corresponding to $t=0$ is generated by twisted linear homogeneous elements of degrees $\leq p^{m}$. Let $\sum_{i=1}^{n}P_{i}(x_{i,0},\dotsc,x_{i,N-1})$ be such an element, the $P_{i}(x_{i,0},\dotsc,x_{i,N-1})$ denoting polynomials of the same degree $d$ over $k$ in $N$ variables, or the zero polynomial. Then, by construction, the multiplicative group acts (via $D(t)$) on $P_{i}$ with weight $(-n-1+2i)d$. Since $I'$ is a fixed point under the action of $D(t)$, this shows that each of the summands $P_{i}(x_{i,0},\dotsc,x_{i,N-1})$ is itself contained in $I'$. But from this it follows easily that $I'$ is even a monomial ideal generated by elements of the form $x_{i,j}^{p^{e_{i,j}}}$ for some non-negative integers $e_{i,j}$. Namely, if $P(x_{i,0},\dotsc,x_{i,N-1}) = a_{e}x_{i,0}^{p^{d+e}}+\dotsb+a_{0}x_{i,e}^{p^{d}}$ with $a_{0}\neq 0$, then we have $x_{i,j}^{p^{d+e-j}}\in I'$ for every $0\leq j\leq e$, since $I'$ is stable under the map $x_{i,j}\mapsto x_{i,j-1}^{p}$ (`comultiplication by $z$`).
Of course, the Hilbert functions of $k[\mathfrak{X}]/I$ and $k[\mathfrak{X}]/I'$ coincide, and the application of corollary \ref{corIntersections} to the specializations $\Spec k\to C$ given by $t=0$ and $t=1$, respectively, shows that also the dimensions of $\fpv{i}{k}\cap I$ and $\fpv{i}{k}\cap I'$ coincide for $i=0,\dotsc,m$. Thus we are indeed reduced to the special case of monomial ideals.
\end{proof}

For the following recall the standard decomposition of $\lambda$ defined in section \ref{subsubsectionStdDec}.

\begin{thm}\label{thmGrassmannBundle}
For each $m=0,\dotsc,N$ the morphism $\mathcal{T}_{m}(\lambda)\to \mathcal{T}_{m-1}(\lambda)$ is relatively representable by a bundle of ordinary Grassmannians $\Grass_{\lvert \mu_{m}\rvert,n}$. In particular, it is smooth of relative dimension $\lvert \mu_{m} \rvert (n-\lvert \mu_{m} \rvert)$.
\end{thm}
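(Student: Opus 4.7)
The plan is to exhibit $\mathcal{T}_m(\lambda) \to \mathcal{T}_{m-1}(\lambda)$ as the Grassmannian bundle $\Grass(|\mu_m|, \mathcal{K})$ for a natural locally-free sheaf $\mathcal{K}$ of rank $n$ on $\mathcal{T}_{m-1}(\lambda)$; the relative dimension will then equal $|\mu_m|(n-|\mu_m|)$ as claimed. Working locally, I would fix an $R$-valued point $I' \in \mathcal{T}_{m-1}(\lambda)(R)$ and classify the lifts $I \in \mathcal{T}_m(\lambda)(R)$ with $I^{\leq p^{m-2}} = I'$. Such a lift is uniquely determined by its level-$(m-1)$ twisted-linear slice $M := \fpv{m-1}{R} \cap I$, because $I = I' + \langle M\rangle$ by twisted-linearity and the generating-degree bound $\leq p^{m-1}$. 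Lemma \ref{lemIntersections} applied to $I'$ (generated in degrees $\leq p^{m-2}$) gives $\fpv{m-1}{R} \cap I' = F(\fpv{m-2}{R} \cap I')$, where $F$ denotes the Frobenius (pth-power) image, so in particular $M \supset \fpv{m-1}{R} \cap I'$.

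Next I would translate the remaining defining conditions of $\mathcal{T}_m(\lambda)$ into conditions on $M$. Since $z^{\#}$ is a ring endomorphism of $R[\mathfrak{X}]$ preserving degree, the stability condition $z^{\#}I \subset I'$ is equivalent to $z^{\#} M \subset \fpv{m-1}{R} \cap I'$. Set $Q := \fpv{m-1}{R}/(\fpv{m-1}{R} \cap I')$, locally free by corollary \ref{corFlatness}; the $z^{\#}$-stability of $I'$ guarantees that $z^{\#}$ descends to an $R$-linear endomorphism of $Q$, and the condition becomes precisely that $\bar M := M/(\fpv{m-1}{R} \cap I')$ lies in $K := \ker(z^{\#}|_Q)$. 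By proposition \ref{propHF}, the Hilbert-function condition $h_m$ forces $\bar M$ to have rank $|\mu_m|$ — the number of new generators that the standard ideal $I(\lambda)$ acquires at degree $p^{m-1}$ over $I(\lambda)^{\leq p^{m-2}}$. Admissibility of $I = I' + \langle M\rangle$ then follows from corollary \ref{corStrongFlatness}: the lower-degree pieces are inherited from $I'$, and $\fpv{m-1}{R}/M$ is flat as soon as $\bar M$ is a subbundle of $Q$, using that $\fpv{m-1}{R} \cap I'$ is already a direct summand of $\fpv{m-1}{R}$.

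The main obstacle I anticipate is showing that $K$ is locally free of rank exactly $n$. Using $z^{\#}(\fpv{m-1}{R}) = F(\fpv{m-2}{R})$, the image $\im(z^{\#}|_Q)$ identifies with $F(\fpv{m-2}{R})/F(\fpv{m-2}{R} \cap I')$, which is locally free of rank $\rk \fpv{m-2}{R} - \rk(\fpv{m-2}{R} \cap I')$. A rank count then gives
\[
\rk K = \rk Q - \rk \im(z^{\#}|_Q) = n + \bigl[\rk (\fpv{m-2}{R} \cap I') - \rk (\fpv{m-1}{R} \cap I')\bigr],
\]
and the bracketed term vanishes: by proposition \ref{propHF} applied to $I'$ (a fortiori generated in degrees $\leq p^{m-1}$) both ranks are determined by the fixed Hilbert function $h_{m-1}$, and evaluating at the standard ideal $I(\lambda)^{\leq p^{m-2}}$ — where both intersections are free on the same monomial generating set — shows they coincide. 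Functoriality in $R$ is immediate from corollary \ref{corIntersections}, so the whole construction sheafifies to a locally-free rank-$n$ sheaf $\mathcal{K}$ on $\mathcal{T}_{m-1}(\lambda)$, and by the universal property of the Grassmannian bundle we conclude $\mathcal{T}_m(\lambda) \cong \Grass(|\mu_m|, \mathcal{K})$, smooth over $\mathcal{T}_{m-1}(\lambda)$ of relative dimension $|\mu_m|(n - |\mu_m|)$.
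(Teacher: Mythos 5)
Your argument follows essentially the same route as the paper's: parametrize lifts of $I'\in\mathcal{T}_{m-1}(\lambda)(R)$ by subbundles of $K=\ker(z^{\#})$ acting on $Q=\fpv{m-1}{R}/(\fpv{m-1}{R}\cap I')$, and verify that $K$ is locally free of rank $n$. The paper reads this rank off the four-term exact sequence $0\to K\to Q\xrightarrow{z^{\#}} Q\to\sum_{i}Rx_{i,m-1}\to 0$, whereas you compute the image of $z^{\#}|_{Q}$ directly and show the correction term vanishes --- an equivalent calculation, both resting on the observation that $\fpv{m-1}{R}\cap I'$ is contained in the image of $z^{\#}$.
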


\begin{cor}\label{corDT}
 The functor $\mathcal{T}_{N}(\lambda)$ is representable by a smooth, connected $k$-scheme of dimension $\sum_{m=1}^{N}\lvert \mu_{m} \rvert (n-\lvert \mu_{m} \rvert) = \dim \tilde{\Sigma}(\mu_{1},\dotsc,\mu_{N})$. The functorial map $\mathcal{D}(\lambda)\hookrightarrow \mathcal{T}_{N}(\lambda)$ is thus a closed immersion of $k$-schemes.
\end{cor}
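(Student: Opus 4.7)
The strategy is to iterate Theorem \ref{thmGrassmannBundle} starting from the base $\mathcal{T}_{0}(\lambda)=\Spec k$. First I would verify that relative representability along a chain of morphisms of Zariski sheaves (recall $\mathcal{T}_{m}(\lambda)$ is a sheaf by remark \ref{remTLOnAffines}) yields representability of the total functor: assuming by induction that $\mathcal{T}_{m-1}(\lambda)$ is represented by a $k$-scheme $T_{m-1}$, the theorem says $\mathcal{T}_{m}(\lambda)\to\mathcal{T}_{m-1}(\lambda)$ is representable by a Zariski-locally trivial $\Grass_{\lvert\mu_{m}\rvert,n}$-bundle, so $\mathcal{T}_{m}(\lambda)$ is represented by a scheme $T_{m}\to T_{m-1}$. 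Since ordinary Grassmannians are smooth, projective and connected, all three properties propagate inductively to every $T_{m}$. The relative dimension at step $m$ is $\lvert\mu_{m}\rvert(n-\lvert\mu_{m}\rvert)$, so the total dimension of $T_{N}=\mathcal{T}_{N}(\lambda)$ is $\sum_{m=1}^{N}\lvert\mu_{m}\rvert(n-\lvert\mu_{m}\rvert)$, which agrees with $\dim\tilde{\Sigma}(\mu_{1},\dotsc,\mu_{N})$ by the explicit formula recorded at the end of Section \ref{sectionDemazureVarieties}.

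For the second assertion, I would first make sure that the natural arrow $\mathcal{D}(\lambda)\hookrightarrow\mathcal{T}_{N}(\lambda)$ really is an inclusion of subfunctors of the multigraded Hilbert scheme $H_{k}^{h}$. By Corollary \ref{corClosure} the sheaf of ideals associated with $\mathcal{D}(\lambda)$ is twisted-linear; the Hilbert function is $h=h_{N}$ by definition of $\mathcal{D}(\lambda)\subset H_{k}^{h}$; and the chain condition $z^{\#}I^{\leq p^{i}}\subset I^{\leq p^{i-1}}$ is a closed condition on $H_{k}^{h}$ (it is the statement that a specific graded scalar-multiplication morphism factors through the quotient, as discussed in Section \ref{sectionLatticeSchemes}), and holds on the orbit $\mathcal{O}(\lambda)$, hence on its scheme-theoretic closure. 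Once the factorization of schemes $\mathcal{D}(\lambda)\to T_{N}$ is in place, it is a monomorphism because both are subpresheaves of the same Hilbert functor and the map is the identity inclusion on $R$-points. Since $\mathcal{D}(\lambda)$ is a closed subscheme of the projective scheme $H_{k}^{h}$ it is proper, while $T_{N}$ is separated. A proper monomorphism of schemes is a closed immersion, which gives the conclusion.

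The step I expect to be the main subtlety is not a calculation but the justification that $\mathcal{D}(\lambda)\to T_{N}$ exists as a morphism of schemes, i.e.\ that each condition defining $\mathcal{T}_{N}(\lambda)$ propagates from the orbit $\mathcal{O}(\lambda)$ to its scheme-theoretic closure. Twisted-linearity has been handled in Corollary \ref{corClosure}; the Hilbert function is fixed throughout $H_{k}^{h}$; and the chain condition, once formulated as a closed condition on the Hilbert scheme via the module-structure morphisms of Section \ref{sectionLatticeSchemes}, also propagates automatically. Everything else is formal: representability is inherited step by step from ordinary Grassmannians, and the final ``proper monomorphism implies closed immersion'' argument is standard.
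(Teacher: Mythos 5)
Your argument matches the paper's (the paper's proof is literally "Recall that $\mathcal{T}_{0}(\lambda)=\Spec k$ and use induction on $N$"): you iterate Theorem \ref{thmGrassmannBundle} from the base $\mathcal{T}_0(\lambda)=\Spec k$, let smoothness, connectedness, projectivity and the dimension count propagate up the tower of Grassmannian bundles, and then invoke the inclusion of subfunctors of $H_k^h$ (already recorded in the text preceding the corollary) to get the closed immersion via the standard fact that a proper monomorphism of finite-type $k$-schemes is a closed immersion. Your unpacking of why the chain condition $z^{\#}I^{\leq p^i}\subset I^{\leq p^{i-1}}$ is closed is slightly imprecise as phrased — it is not that scalar multiplication factors through the quotient, but rather that the composite $I^{\leq p^i}\hookrightarrow R[\mathfrak{X}]\xrightarrow{z^{\#}}R[\mathfrak{X}]\to R[\mathfrak{X}]/I^{\leq p^{i-1}}$ vanishes, which is a closed condition because both ends are graded with each component locally free of finite rank — but this is a wording issue, not a gap, and the overall argument is sound and in line with the paper's intent.
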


\begin{proof}[Proof of corollary]
 Recall that $\mathcal{T}_{0}(\lambda)=\Spec k$ and use induction on $N$.
\end{proof}

\begin{proof}[Proof of theorem]
Let $I$ be any $R$-valued point of $\mathcal{T}_{m-1}(\lambda)$ and consider the exact sequence of flat $R$-modules
\begin{multline*}
0 \to K := \ker(z^{\#}) \to \fpv{m-1}{R}/(\fpv{m-1}{R}\cap I) \xrightarrow{z^{\#}}\\ \to \fpv{m-1}{R}/(\fpv{m-1}{R}\cap I) \to \sum_{i=1}^{n}Rx_{i,m-1} \to 0.
\end{multline*}
Then $K$ is flat over $R$, and hence locally free since it is of finite presentation, and of rank $n$. By proposition \ref{propHF} and corollary \ref{corStrongFlatness} the fiber of $\mathcal{T}_{m}(\lambda)\to \mathcal{T}_{m-1}(\lambda)$ is in functorial bijection with the set of $R$-submodules $L\subset K$ such that $K/L$ is flat over $R$ and of constant rank
\begin{equation}\label{eqnRankMu}
\rk (I(\lambda)^{\leq p^{m-1}}/I(\lambda)^{\leq p^{m-2}})_{p^{m-1}} = \rk I(\lambda)_{p^{m-1}}-\rk I(\lambda)_{p^{m-2}} = \lvert \mu_{m}\rvert.
\end{equation}
But these are exactly the $R$-valued points of a $\Grass_{\lvert \mu_{m}\rvert,n}$-bundle over $\Spec R$. In particular, the relative dimension of $\mathcal{T}_{m}(\lambda)\to \mathcal{T}_{m-1}(\lambda)$ is equal to $(\lvert \mu_{m}\rvert)(n-\lvert \mu_{m}\rvert)$.
\end{proof}

\subsection{The relation with Demazure varieties}

Recall the ideal $I(\lambda)$ of the standard lattice scheme associated to $\lambda$ and its orbit closure $\mathcal{D}(\lambda)$.
Denote by $h:\N\to \N$ the Hilbert function of $I(\lambda)$. Furthermore, let $N$ be as in \ref{subsubsectionStdLattice}, i.e. $N = \max \lbrace \langle \alpha, \lambda \rangle \suchThat \alpha \text{ a root of }T \rbrace$, and, as in the previous section, let $\mathfrak{X}=\lbrace x_{i,j}; i=1,\dotsc,n; j=0,\dotsc,N-1 \rbrace$.

Denote by $\Frob_{R}: R[\mathfrak{X}]\to R[\mathfrak{X}]$ the relative Frobenius morphism over $R$.
Let $F^{*}$ be the pullback-functor on the category of $R$-modules along the absolute Frobenius morphism $R\to R$, whose effect on a submodule of $R[\mathfrak{X}]$ is raising coefficients of its elements to the $p$-th power. Note that applying $\Frob_{R}$ to a submodule of $R[\mathfrak{X}]$ and then pulling the image back via $F^{*}$ yields the image of the original submodule under the \emph{absolute} Frobenius.
The pullback functor $F^{*}$ induces an $\mathbb{F}_{p}$-morphism $\varphi: \G_{\Gl_{n}}^{(N)}\to\G_{\Gl_{n}}^{(N)}; \mathcal{L}\mapsto F^{*}(\mathcal{L})$. We set
$$
\Phi := \varphi^{N-1}\times\dotsb\times\varphi\times \id: \prod_{i=1}^{N}\G_{\Gl_{n}}^{(N)}\to\prod_{i=1}^{N}\G_{\Gl_{n}}^{(N)}.
$$

Furthermore set $M := \fpv{N-1}{R}$ and note that on $M$ we have an endomorphism $z^{\#}$ which is defined by restriction of scalar multiplication with $z$ on $R[\mathfrak{X}]$: it sends $x_{i,j}^{p^{N-1-j}}$ to $x_{i,j-1}^{p^{N-j}}$ if $j>0$ and to $0$ otherwise. Let $\lbrace e_{i,j}\rbrace$ be the dual basis of $\lbrace x_{i,j}^{p^{N-1-j}} \rbrace$ and consider the dual map of $z^{\#}$, which we again denote by $z$:
$$
z: V := \Hom_{R}(M,R)\to \Hom_{R}(M,R)\simeq V; e_{i,j-1}\mapsto e_{i,j}.
$$
Note that $V \simeq R^{nN}$ and its $R$-submodules with projective quotient which are stable under $z$ are lattices in the sense of section \ref{sectionDemazureVarieties} (they correspond to precisely those lattices $\mathcal{L}\subset R[[z]]^{n}$ which contain $z^{N}R[[z]]^{n}$).


\begin{thm}\label{thmRelFunctors}
Let $\Sigma(\lambda) = \tilde{\Sigma}(\mu_{1},\dotsc,\mu_{N})$ denote the variety of lattice chains defined in section \ref{sectionDemazureVarieties}, with $\mu_{1},\dotsc,\mu_{N}$ the standard decomposition of $\lambda$. Then there is a closed immersion $\iota: \mathcal{D}(\lambda) \to \prod_{i=1}^{N}\G_{\Gl_{n}}^{(N)}$, such that the following diagram commutes:

\begin{xy}
 \xymatrix{
	\mathcal{D}(\lambda) \ar[r]^{\iota}\ar[d]^{\sigma} & \prod_{i=1}^{N}\G_{\Gl_{n}}^{(N)} \ar[d]^{\Phi} \\
	\Sigma(\lambda) \ar@^{(->}[r] & \prod_{i=1}^{N}\G_{\Gl_{n}}^{(N)}.
}
\end{xy}

Moreover, the map $\sigma$ is a universal homeomorphism and the map from $\mathcal{D}(\lambda)$ to the fiber product of $\Sigma(\lambda)$ with $\prod_{i=1}^{N}\G_{\Gl_{n}}^{(N)}$ is a nil-immersion.
\end{thm}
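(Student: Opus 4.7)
My plan is to construct $\iota$ directly by dualizing a Frobenius-raised filtration on the ideal $I$. For $I \in \mathcal{D}(\lambda)(R) \subset \mathcal{T}_N(\lambda)(R)$ and each $m = 1, \dotsc, N$, the chain hypothesis $z^{\#} I^{\leq p^i} \subset I^{\leq p^{i-1}}$ from the definition of $\mathcal{T}_m(\lambda)$ will make $M^{(m)} := \fpv{m-1}{R} \cap I^{\leq p^{m-1}}$ stable under $z^{\#}$, while corollary \ref{corStrongFlatness} ensures the quotient $\fpv{m-1}{R}/M^{(m)}$ is locally free. The Frobenius-raising map $\Frob_R^{N-m} \colon \fpv{m-1}{R} \hookrightarrow \fpv{N-1}{R}$ sending $x_{i,j}^{p^{m-1-j}}$ to $x_{i,j}^{p^{N-1-j}}$ is injective and $z^{\#}$-equivariant, so dualizing yields a $z$-stable lattice
$$
\mathcal{L}_m := \bigl(\Frob_R^{N-m}(M^{(m)})\bigr)^{\perp} \subset V.
$$
Setting $\iota(I) := (\mathcal{L}_1, \dotsc, \mathcal{L}_N)$ gives a morphism to $\prod_{i=1}^N \mathcal{G}_{\Gl_n}^{(N)}$.

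To verify the commuting diagram, I would observe that $F^{*, N-m} \circ \Frob_R^{N-m}$ is the absolute Frobenius $F_{\mathrm{abs}}^{N-m}$, which by lemma \ref{lemIntersections} sends the generators $f_j^{p^{m-1-e_j}}$ of $M^{(m)}$ to the generators $f_j^{p^{N-1-e_j}}$ of $\fpv{N-1}{R} \cap I^{\leq p^{m-1}}$, giving
$$
\varphi^{N-m}(\mathcal{L}_m) = \bigl(\fpv{N-1}{R} \cap I^{\leq p^{m-1}}\bigr)^{\perp}.
$$
The tuple $(\varphi^{N-m}(\mathcal{L}_m))_m$ is a descending chain (from $I^{\leq p^{m-1}} \subset I^{\leq p^m}$), $z$ kills each successive quotient (from $z^{\#} I^{\leq p^{m-1}} \subset I^{\leq p^{m-2}}$), and the rank of the quotient equals $|\mu_m|$ by proposition \ref{propHF} together with \eqref{eqnRankMu}. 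These three properties are exactly the lattice-chain condition $\inv = \mu_m$, so $\Phi \circ \iota$ factors through $\Sigma(\lambda) \hookrightarrow \prod \mathcal{G}^{(N)}$, defining $\sigma$.

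For the closed-immersion claim, injectivity of $\iota$ on $R$-valued points follows from $\mathcal{L}_m \supset z^m V$, which forces $\mathcal{L}_m^{\perp} \subset \ker((z^{\#})^m) = \Frob_R^{N-m}(\fpv{m-1}{R})$; injectivity of $\Frob_R^{N-m}$ then recovers $M^{(m)}$ uniquely, and corollary \ref{corClosure} reconstructs $I$ from the $M^{(m)}$'s. Combined with properness of $\mathcal{D}(\lambda)$ (corollary \ref{corDT}), this upgrades to a closed immersion. Since $\varphi$ is induced by the absolute Frobenius pullback on an $\mathbb{F}_p$-scheme it is a universal homeomorphism, whence so is $\Phi$; matching dimensions ($\dim \mathcal{D}(\lambda) = \dim \Sigma(\lambda)$ by corollary \ref{corDT} and the formula in section \ref{subsectionDemazure}) makes $\sigma$ bijective on points, hence a universal homeomorphism. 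Finally, the map $\mathcal{D}(\lambda) \to \Sigma(\lambda) \times_{\prod \mathcal{G}^{(N)}} \prod \mathcal{G}^{(N)}$ is a closed immersion by the above, and I would derive its nil-immersion property by noting that $\mathcal{D}(\lambda)$ is reduced while the reduced subscheme of the fiber product coincides with $\iota(\mathcal{D}(\lambda))$ (topologically, $\iota(\mathcal{D}(\lambda)) = \Phi^{-1}(\Sigma(\lambda))$ by surjectivity of $\sigma$ combined with $\Phi$ being a universal homeomorphism). I expect this last step to be the main obstacle, requiring careful comparison of the scheme-theoretic structure on the fiber product coming from Frobenius-pulled-back equations of $\Sigma(\lambda)$ against the un-Frobeniused equations defining $\iota(\mathcal{D}(\lambda))$ in $\prod \mathcal{G}^{(N)}$.
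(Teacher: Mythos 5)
Your construction of $\iota$ is essentially identical to the paper's: set $L_l = \Frob_R^{N-l}(\fpv{l-1}{R}\cap I)$ and $\mathcal{L}_l = L_l^\perp = \Hom_R(M/L_l, R)$, use corollary~\ref{corFlatness} and lemma~\ref{lemIntersections} for projectivity and functoriality, twisted-linearity for injectivity, and the computation $F^{*(N-l)}\Frob_R^{N-l} = F_{\mathrm{abs}}^{N-l}$ to identify $\Phi\circ\iota$ with the dual of the filtration $\fpv{N-1}{R}\cap I^{\leq p^{l-1}}$. That part, including the rank computation via \eqref{eqnRankMu} and the closed-immersion upgrade via properness, matches the paper and is sound.

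The gap is in the surjectivity/dimension step, and it is a genuine circularity. You invoke corollary~\ref{corDT} for $\dim\mathcal{D}(\lambda) = \dim\Sigma(\lambda)$, but corollary~\ref{corDT} only shows that $\mathcal{T}_N(\lambda)$ has dimension $\dim\Sigma(\lambda)$ and that $\mathcal{D}(\lambda)\hookrightarrow\mathcal{T}_N(\lambda)$ is a closed immersion; this gives merely $\dim\mathcal{D}(\lambda)\leq\dim\Sigma(\lambda)$. The equality you need is exactly corollary~\ref{corMain}, which the paper proves \emph{as a consequence} of theorem~\ref{thmRelFunctors}. So your argument assumes what the theorem is supposed to deliver. (Separately, ``bijective on points implies universal homeomorphism'' is also too quick; the clean route is to establish that the closed immersion $\alpha\colon\mathcal{D}(\lambda)\to S$ is topologically surjective, hence a nil-immersion, from which the universal-homeomorphism claim for $\sigma$ follows by composing with the universal homeomorphism $S\to\Sigma(\lambda)$.)

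The paper closes this gap without any prior dimension count by an equivariance argument: $\Phi\circ\iota$, and hence $\sigma$, intertwines the $\Loop^{\geq 0}\Sl_n$-action on $\mathcal{D}(\lambda)$ (via $\Loop^{\geq 0}\Sl_n\to\Loop^{F,\geq 0}\Sl_n$ from \eqref{eqnNonTwistedTwistedMorphisms}) with a Frobenius-twisted $\Loop^{\geq 0}\Sl_n$-action on $\Sigma(\lambda)$. Since $\sigma$ sends $I(\lambda)$ to the standard lattice chain, the image of $\mathcal{O}(\lambda)$ is the orbit of that chain, i.e.\ the big cell, which is dense in $\Sigma(\lambda)$. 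Density of the image plus finiteness of $\Phi$ then forces the closed immersion $\alpha$ to be surjective, giving the nil-immersion, and the dimension equality drops out afterwards rather than being an input. You should replace your dimension citation by this equivariance-and-density argument.
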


\begin{proof}
Recall that every $R$-valued point of $\mathcal{D}(\lambda)$ `is` a twisted-linear ideal $I$ by corollary \ref{corClosure}. For $l=1,\dotsc,N$ we set $L_{l} := \Frob_{R}^{N-l}(\fpv{l-1}{R}\cap I)$ and $\mathcal{L}_{l} := \Hom_{R}(M/L_{l}, R)$ (Compare example \ref{exDemazure} at the beginning of the previous section!).
Then by corollary \ref{corFlatness}, the modules $L_{l}$ are projective. By lemma \ref{lemIntersections} this assignment is functorial, and finally, since any twisted-linear ideal $I$ is generated by the sets $\fpv{l}{R}\cap I$, we have a functorial \emph{injection}. Since split exact sequences are preserved by both $\Frob_{R}$ and $\Hom_{R}(-,R)$, $V/\mathcal{L}_{l}$ is again projective, whence the map $\iota: I\mapsto (\mathcal{L}_{1},\dotsc,\mathcal{L}_{N})$ is well-defined. Note furthermore the equation $F^{*}L_{l-1} = \Frob_{R}^{N-l}\Frob(\fpv{l-2}{R}\cap I) \subset L_{l}$. It implies
\begin{equation}
F^{*}\mathcal{L}_{l-1} = F^{*}\Hom_{R}(M/L_{l-1}, R) = \Hom_{R}(M/F^{*}L_{l-1}, R) \supset \mathcal{L}_{l},
\end{equation}
which proves that the image of $\Phi\circ\iota$ indeed consists of descending lattice chains. Again by projectivity, the rank of successive quotients $F^{*}\mathcal{L}_{l-1}/\mathcal{L}_{l}$ is constant on $\Spec R$, i.e. equal to $\lvert \mu_{l}\rvert$ by equation \eqref{eqnRankMu}. Thus the map $\sigma$.

It remains to check that the immersion $\alpha: \mathcal{D}(\lambda)\to S := \Sigma(\lambda)\times_{\prod_{i=1}^{N}\G_{Gl_{n}}^{(N)}}\prod_{i=1}^{N}\G_{Gl_{n}}^{(N)}$ of $k$-varieties is indeed a nil-immersion, i.e. that it is surjective. (Since $S\to \Sigma(\lambda)$ is a universal homeomorphism, this will imply that $\sigma$ is a universal homeomorphism as well.)

Observe that the map $\Phi\circ\iota$ (and hence $\sigma$) is $\Loop^{\geq 0}\Sl_{n}$-equivariant for the following $\Loop^{\geq 0}\Sl_{n}$-actions: on $\mathcal{D}(\lambda)$ consider the morphism $\Loop^{\geq 0}\Sl_{n}\to \Loop^{F,\geq 0}\Sl_{n}$ (see \eqref{eqnNonTwistedTwistedMorphisms}) composed with the canonical $\Loop^{F,\geq 0}\Sl_{n}$-action, and on $\Sigma(\lambda)$ consider the morphism $\Loop^{\geq 0}\Sl_{n}\times_{k}k \to \Loop^{\geq 0}\Sl_{n}$ given by $k\to k;x\mapsto x^{p^{N-1}}$, composed with the natural action of $\Loop^{\geq 0}\Sl_{n}$. This shows that the image of $\mathcal{O}(\lambda)$ is dense in $\Sigma(\lambda)$, and in particular both have the same dimension. By finiteness of $\Phi$ we see that $\alpha$ has dense image as well, and is therefore surjective.
\end{proof}


Let us recall the iterated bundle of Grassmannians $\mathcal{T}_{N}(\lambda)$ defined in section \ref{sectionGrassmannBundle}. In corollary \ref{corDT} we saw that there is a natural closed immersion $\mathcal{D}(\lambda)\hookrightarrow \mathcal{T}_{N}(\lambda)$, where the dimension of the latter equals the dimension of $\Sigma(\lambda)$. But in theorem \ref{thmRelFunctors} we have now seen that this is also the dimension of $\mathcal{D}(\lambda)$. Thus we have proved

\begin{cor}\label{corMain}
The varieties $\mathcal{D}(\lambda)$ and $\mathcal{T}_{N}(\lambda)$ are equal. In particular, $\mathcal{D}(\lambda)$ is an iterated bundle of ordinary Grassmannians.
\end{cor}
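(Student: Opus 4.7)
The plan is to combine the closed immersion from corollary \ref{corDT} with the dimension information coming from theorem \ref{thmRelFunctors}, and then invoke a standard argument comparing an irreducible closed subscheme with an ambient smooth irreducible scheme of the same dimension. All the hard work has been done in the preceding results; the corollary is essentially a bookkeeping exercise.

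More precisely, I would proceed as follows. First, corollary \ref{corDT} provides a closed immersion $\mathcal{D}(\lambda) \hookrightarrow \mathcal{T}_{N}(\lambda)$, and tells us that $\mathcal{T}_{N}(\lambda)$ is smooth, connected, and of dimension $\sum_{m=1}^{N}|\mu_{m}|(n-|\mu_{m}|) = \dim \tilde{\Sigma}(\mu_{1},\dotsc,\mu_{N})$. Second, theorem \ref{thmRelFunctors} furnishes a universal homeomorphism $\sigma: \mathcal{D}(\lambda) \to \Sigma(\lambda) = \tilde{\Sigma}(\mu_{1},\dotsc,\mu_{N})$, and since universal homeomorphisms preserve dimension, we conclude $\dim \mathcal{D}(\lambda) = \dim \tilde{\Sigma}(\mu_{1},\dotsc,\mu_{N}) = \dim \mathcal{T}_{N}(\lambda)$.

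Third, I would use that $\mathcal{D}(\lambda)$, being defined as the closure of the $\Loop^{F,\geq 0}G$-orbit of the standard lattice scheme $V(\lambda)$, is irreducible. Hence $\mathcal{D}(\lambda)$ is an irreducible closed subscheme of the smooth (in particular reduced) and irreducible scheme $\mathcal{T}_{N}(\lambda)$, having the same dimension as the ambient scheme. Consequently the underlying topological spaces coincide, and as $\mathcal{T}_{N}(\lambda)$ is reduced the ideal sheaf defining $\mathcal{D}(\lambda) \hookrightarrow \mathcal{T}_{N}(\lambda)$ is contained in the nilradical, which is zero. Therefore the closed immersion is an isomorphism of schemes, i.e. $\mathcal{D}(\lambda) = \mathcal{T}_{N}(\lambda)$.

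The ``in particular'' assertion is then immediate from theorem \ref{thmGrassmannBundle}, which exhibits $\mathcal{T}_{N}(\lambda)$ as an iterated Grassmannian bundle, each stage being a $\Grass_{|\mu_{m}|,n}$-bundle over the previous one. There is no genuine obstacle in this argument; the only subtlety worth noting is the use of irreducibility of $\mathcal{D}(\lambda)$ (needed to rule out $\mathcal{D}(\lambda)$ being a proper closed subset of the irreducible $\mathcal{T}_{N}(\lambda)$ despite having the same dimension), which is however automatic from the orbit-closure definition of $\mathcal{D}(\lambda)$.
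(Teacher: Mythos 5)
Your argument is correct and is essentially the paper's own: a closed immersion $\mathcal{D}(\lambda)\hookrightarrow\mathcal{T}_{N}(\lambda)$ from Corollary \ref{corDT} together with the dimension equality furnished by the universal homeomorphism of Theorem \ref{thmRelFunctors} forces equality, using smoothness, connectedness (hence irreducibility and reducedness) of $\mathcal{T}_{N}(\lambda)$. One small correction: what rules out $\mathcal{D}(\lambda)$ being a proper closed subset of the same dimension is irreducibility of the \emph{ambient} scheme $\mathcal{T}_{N}(\lambda)$, not irreducibility of $\mathcal{D}(\lambda)$ as you remark at the end; but since you also invoke the former, the proof stands.
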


\begin{ex}\label{exPullback}
We illustrate the theorem by calculating explicitly the smallest nontrivial example: Let $n=2$ and $N=2$. We are thus dealing with ideals in the polynomial ring $R[x_{0},x_{1},y_{0},y_{1}]$ for some $k$-algebra $R$. Let $\lambda=(1,-1)\in \domcochar(T)$ whence $\tilde{\lambda}=(2,0)$, $\mu_{1}=\mu_{2}=(1,0)$ and $I(\lambda)=(x_{0},x_{1})$.

\begin{prop}
 In the above situation the variety $\mathcal{D}(\lambda)$ is isomorphic to the projective space bundle $X=\Proj_{\mathbb{P}_{k}^{1}}(\ssh_{\mathbb{P}_{k}^{1}}\oplus\ssh_{\mathbb{P}_{k}^{1}}(-2p))$. The boundary of the big cell is the divisor $\mathbb{P}^{1}_{k}\times_{k}\lbrace\infty\rbrace$.
\end{prop}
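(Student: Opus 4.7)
The plan is to apply Corollary \ref{corMain} and then Grothendieck's splitting theorem on $\mathbb{P}^{1}$. Since $\mu_{1}=\mu_{2}=(1,0)$ and $|\mu_{1}|=|\mu_{2}|=1$, we have $\mathcal{T}_{1}(\lambda)=\Grass_{1,2}=\mathbb{P}^{1}_{k}$, and by Theorem \ref{thmGrassmannBundle} together with Corollary \ref{corMain}, $\mathcal{D}(\lambda)=\mathcal{T}_{2}(\lambda)$ is the projective bundle of lines in a rank-$2$ vector bundle $K$ on $\mathbb{P}^{1}_{k}$. Since any such $K$ decomposes as $\ssh(a)\oplus \ssh(b)$, everything reduces to computing $K$.

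Let $\idsh$ denote the universal twisted-linear ideal on $\mathcal{T}_{1}(\lambda)$. Its degree-$1$ part is the tautological sub-bundle $\ssh(-1)\hookrightarrow \ssh x_{0}\oplus \ssh y_{0}$, locally generated by $ax_{0}+by_{0}$. By Lemma \ref{lemIntersections}, $\idsh\cap \fpv{1}{\ssh}$ is globally generated by the $p$-th power $(ax_{0}+by_{0})^{p}=a^{p}x_{0}^{p}+b^{p}y_{0}^{p}$, hence coincides with $L:=\ssh(-p)$ sitting inside $V_{1}:=\ssh x_{0}^{p}\oplus \ssh y_{0}^{p}$ as the Frobenius pullback of $\ssh(-1)$. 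Writing $V_{2}:=\ssh x_{1}\oplus \ssh y_{1}$, the operator $z^{\#}$ is zero on $V_{1}$ and carries $V_{2}$ isomorphically onto $V_{1}$ via $x_{1}\mapsto x_{0}^{p}$, $y_{1}\mapsto y_{0}^{p}$. Hence $\overline{v_{1}+v_{2}}\in (V_{1}\oplus V_{2})/L$ lies in $K=\ker(z^{\#})$ iff $z^{\#}v_{2}\in L$, so with $V_{2}':=(z^{\#})^{-1}(L)\subset V_{2}$ we obtain
\begin{equation*}
0\to V_{1}/L\to K\to V_{2}'\to 0,
\end{equation*}
where $V_{1}/L\cong \ssh(p)$ (Euler-type quotient of $\ssh^{\oplus 2}$ by $\ssh(-p)$) and $V_{2}'\cong L\cong \ssh(-p)$ via $z^{\#}\colon V_{2}\xrightarrow{\sim}V_{1}$.

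Since $\Ext^{1}_{\mathbb{P}^{1}_{k}}(\ssh(-p),\ssh(p))=H^{1}(\mathbb{P}^{1}_{k},\ssh(2p))=0$, the sequence splits, giving $K\cong \ssh(p)\oplus \ssh(-p)$. Twisting by $\ssh(-p)$ (which does not change the bundle of lines) identifies
\begin{equation*}
\mathcal{D}(\lambda)\cong \Proj_{\mathbb{P}^{1}_{k}}(\ssh\oplus \ssh(-2p))=X,
\end{equation*}
proving the first claim. For the second claim, by Corollary \ref{corInfStruct} the boundary of the big cell equals the non-reduced locus in $\mathcal{D}(\lambda)$. The sub-bundle $V_{1}/L\cong \ssh(p)\subset K$ defines one of the two natural sections of $\mathcal{D}(\lambda)\to \mathbb{P}^{1}_{k}$; at $[a:b]=[1:0]$ it selects the ideal $\langle x_{0},y_{0}^{p}\rangle$, whose vanishing scheme $\Spec k[x_{1},y_{0},y_{1}]/(y_{0}^{p})$ is non-reduced. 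By $\Sl_{n}$-equivariance of the construction, this section is globally the non-reduced locus, and after the $\ssh(-p)$-twist it is precisely the ``infinity''-section $\mathbb{P}^{1}_{k}\times\{\infty\}$ of $X$.

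The main technical point requiring genuine care is the global identification of the extension defining $K$ -- that $\idsh\cap \fpv{1}{\ssh}$ globalizes to $\Frob^{*}\ssh(-1)\cong \ssh(-p)$ and that $V_{2}'$ globalizes to another copy of $\ssh(-p)$. Once these are in place, the splitting is automatic from the vanishing of $H^{1}(\mathbb{P}^{1}_{k},\ssh(2p))$, and the identification of the non-reduced section with $\mathbb{P}^{1}_{k}\times\{\infty\}$ reduces to tracking which summand of $\ssh(p)\oplus \ssh(-p)$ corresponds to which section after the twist.
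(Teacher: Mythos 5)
Your proof takes a genuinely different route from the paper's. The paper writes down explicit coordinate charts $\varphi,\psi$ for $\mathcal{D}(\lambda)$, checks that they glue according to the transition function $\chi$ defining $X$, and uses twisted-linearity to produce a functorial inverse. You instead exploit the iterated Grassmann-bundle structure (Theorem \ref{thmGrassmannBundle} together with Corollary \ref{corMain}): after identifying $\mathcal{T}_{1}(\lambda)\cong\mathbb{P}^{1}_{k}$, you compute the rank-$2$ kernel bundle $K$ globally over $\mathbb{P}^{1}_{k}$ via Lemma \ref{lemIntersections}, finding $K\cong\ssh(p)\oplus\ssh(-p)$ and hence $\mathcal{D}(\lambda)=\mathbb{P}(K)\cong\Proj_{\mathbb{P}^{1}_{k}}(\ssh\oplus\ssh(-2p))$. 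This is more structural and avoids coordinate patching; what it costs is that one must keep careful track of how the various subsheaves ($L$, $V_{1}/L$, $V_{2}'$) globalize as line bundles, which is exactly where you focus. Incidentally, the $\Ext^{1}$-vanishing argument is overkill: since $L\subset V_{1}$, the ambient module decomposes as $(V_{1}\oplus V_{2})/L=V_{1}/L\oplus V_{2}$, and $K=V_{1}/L\oplus V_{2}'$ is visibly a direct sum, so the splitting is automatic.

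For the second assertion there is a genuine gap. Invoking Corollary \ref{corInfStruct} (which, although stated after the proposition in the paper's ordering, depends only on Theorem \ref{thmRelFunctors} and the constancy of the Hilbert function, so there is no circularity) reduces the claim to identifying the non-reduced locus, and your check at $[a:b]=[1:0]$ plus $\Sl_{2}$-equivariance establishes that the section $V_{1}/L$ is \emph{contained} in the non-reduced locus. But ``this section is globally the non-reduced locus'' asserts equality, and equivariance alone does not rule out additional $\Sl_{2}$-stable components of the boundary. You still need the reverse inclusion: off the section (i.e.\ for $d\neq 0$), the ideal $(ax_{0}+y_{0},\,cx_{0}^{p}+d(a^{p}x_{1}+y_{1}))$ lets one eliminate $y_{0}$ and $y_{1}$, so the quotient is a polynomial ring, hence reduced. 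Equivalently, one can compare directly with the explicit description of $\mathcal{O}(\lambda)$ in Example \ref{exDemazure}, which is how the paper gets this at once. With that one-line check added, your argument is complete.
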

Note that $X$ can be explicitly constructed by gluing two copies of $U=\Aff_{k}^{1}\times\mathbb{P}^{1}_{k}$ via the self-inverse isomorphism
$$
\chi: (\Aff_{k}^{1}-\lbrace 0\rbrace)\times\mathbb{P}^{1}_{k}\xrightarrow{\simeq}(\Aff_{k}^{1}-\lbrace 0\rbrace)\times\mathbb{P}^{1}_{k}; (a,(c:d))\mapsto (1/a,(c:a^{2p}d)).
$$
We will use this description in the

\begin{proof}[Proof of the proposition]
 Define two maps on $R$-valued points.
\begin{equation*}
 \begin{split}
  \varphi: U(R) &\to \mathcal{D}(\lambda)(R)\\
  (a,(c:d)) &\mapsto (ax_{0}+y_{0}, cx_{0}^{p}+da^{p}x_{1}+dy_{1})\\
  \psi: U(R) &\to \mathcal{D}(\lambda)(R)\\
  (a,(c:d)) &\mapsto (x_{0}+ay_{0}, -cy_{0}^{p}+da^{p}y_{1}+dx_{1})
 \end{split}
\end{equation*}
The maps $\varphi(R)$ and $\psi(R)$ are clearly functorial in $R$, so they constitute morphisms of $k$-schemes.
Furthermore, observe that $\varphi\circ\chi = \psi$ on $(\Aff_{k}^{1}-\lbrace 0\rbrace)\times\mathbb{P}^{1}_{k}$. This shows that $\varphi$ and $\psi$ give a morphism $f: X \to \mathcal{D}(\lambda)$.
To show that this is an isomorphism, we have to find an inverse $f(R)^{-1}$ which is functorial in $R$. But by proposition \ref{corClosure} every $R$-valued point of $\mathcal{D}(\lambda)$ is twisted-linear, i.e. locally of the form $(ax_{0}+by_{0}, cx_{0}^{p}+da^{p}x_{1}+db^{p}y_{1})$ or $(ax_{0}+by_{0}, -cy_{0}^{p}+da^{p}x_{1}+db^{p}y_{1})$ (depending on whether $b\neq 0$ or $a\neq 0$) with $a$ or $b$ a unit in $R$ and $c$ or $d$ a unit in $R$. Such an ideal defines in a functorial way an $R$-valued point of $X$.
Obviously, the divisor $\lbrace d=0\rbrace = \mathbb{P}^{1}_{k}\times_{k}\lbrace\infty\rbrace$ maps bijectively to the boundary of the big cell: it parametrizes the ideals of the form $(ax_{0}+y_{0}, x_{0}^{p})$ or $(x_{0}+ay_{0}, y_{0}^{p})$.
\end{proof}

Similarly, we compute the Demazure-variety $\Sigma(\lambda)$. Its $k$-valued points are descending chains of subspaces $k^{2\times 2}_{x_{0},x_{1},y_{0},y_{1}}=\mathcal{L}_{0}\supset \mathcal{L}_{1}\supset \mathcal{L}_{2}$ which are stable under multiplication by $z$: $x_{0}\mapsto x_{1},y_{0}\mapsto y_{1}$ and of codimension $1$ and $2$, respectively.
Hence one sees like in the proposition above that $\Sigma(\lambda)\simeq \Proj_{\mathbb{P}_{k}^{1}}(\ssh_{\mathbb{P}_{k}^{1}}\oplus\ssh_{\mathbb{P}_{k}^{1}}(-2))$, using two charts on $R$-valued points
\begin{equation*}
 \begin{split}
  \varphi': U(R) &\mapsto \mathcal{D}_{\mu}(R)\\
  (a,(c:d)) &\mapsto (ax_{0}+y_{0}, cx_{0}+dax_{1}+dy_{1})\\
  \psi': U(R) &\mapsto \mathcal{D}_{\mu}(R)\\
  (a,(c:d)) &\mapsto (x_{0}+ay_{0}, -cy_{0}+day_{1}+dx_{1}),
 \end{split}
\end{equation*}
and gluing them via $(a,(c:d))\mapsto (1/a,(c:a^{2}d))$.

Since the Frobenius morphism commutes with dualization, to compute the map $\sigma: \mathcal{D}(\lambda)\to \Sigma(\lambda)$ of theorem \ref{thmRelFunctors} explicitly, we may compute in terms of defining relations of lattices, instead of generators of lattices. Hence, using the charts from above, we immediately obtain the picture

\begin{center}
 \begin{xy}
 \xymatrix{
	\Proj_{\mathbb{P}_{k}^{1}}(\ssh_{\mathbb{P}_{k}^{1}}\oplus\ssh_{\mathbb{P}_{k}^{1}}(-2p)) \ar[r]^{\sigma}\ar[d] & \Proj_{\mathbb{P}_{k}^{1}}(\ssh_{\mathbb{P}_{k}^{1}}\oplus\ssh_{\mathbb{P}_{k}^{1}}(-2)) \ar[d] \\ \mathbb{P}_{k}^{1} \ar[r]^{Frob}\ar[d] & \mathbb{P}_{k}^{1} \ar[d] \\
        \Spec k \ar[r]^{Frob} & \Spec k,
}
\end{xy}
\end{center}

where the map $\sigma$ is given on the respective charts by
\begin{equation*}
  (a,(c:d)) \mapsto (a^{p},(c:d)).
\end{equation*}
\end{ex}

It is elementary to check that the upper square is cartesian, by looking at the transition functions defining the respective line bundles. This is in agreement with theorem \ref{thmRelFunctors}: namely, a diagram relating this with the square of theorem \ref{thmRelFunctors} has the form

\bigskip

\begin{xy}
 \xymatrix{
	\mathcal{D}(\lambda)\ar@/^ 0.5cm/[rrr] \ar[r]^{\iota}\ar[d]^{\sigma} & \G_{\Gl_{2}}^{(2)}\times\G_{\Gl_{2}}^{(2)} \ar[r]^{\pi_{1}} \ar[d]^{\varphi\times\id} & \G_{\Gl_{2}}^{(2)} \ar[d]^{\varphi} & \mathbb{P}^{1}  \ar[l]\ar[d]^{\Frob} \\
	\Sigma(\lambda)\ar@/_ 0.5cm/[rrr] \ar@^{(->}[r] & \G_{\Gl_{2}}^{(2)}\times\G_{\Gl_{2}}^{(2)} \ar[r]^{\pi_{1}} & \G_{\Gl_{2}}^{(2)} & \ar[l] \mathbb{P}^{1}.
}
\end{xy}

\bigskip

Here the horizontal maps on the right are given by identifying $\mathbb{P}^{1}_{k}$ with the space of lattices $\mathcal{L}_{1}$ with $z\mathcal{L}_{0} \subset \mathcal{L}_{1} \subset \mathcal{L}_{0}$. Note that the middle square is trivially cartesian, while the right hand square is not. Thus also the left hand square fails to be cartesian and the map $\alpha$ of theorem \ref{thmRelFunctors} is indeed a nontrivial nil-immersion.

\subsection{Demazure varieties as schemes of lattices with infinitesimal structure.}

The $k$-valued points of $\mathcal{D}(\lambda)$ are lattice schemes in $\Aff_{k}^{nN}$ (see definition \ref{defLatticeScheme}), which, in general, will not be reduced. For instance, in the situation of example \ref{exPullback}, the set of points in $\mathcal{D}(\lambda) \simeq \Proj_{\mathbb{P}_{k}^{1}}(\ssh_{\mathbb{P}_{k}^{1}}\oplus\ssh_{\mathbb{P}_{k}^{1}}(-2p))$ which have non-reduced fibers in the universal family, is exactly the divisor $d=0$ (the boundary of the big cell $\O(\lambda)$). The corresponding lattice schemes have the form $(ax_{0}+by_{0}, x_{0}^{p}, y_{0}^{p})$ with $a$ and $b$ not both zero. In general, we obtain

\begin{cor}\label{corInfStruct}
 If $k$ is a perfect field of characteristic $p$, then the map $\sigma: \mathcal{D}(\lambda)\to \Sigma(\lambda)$ defines a bijection of $k$-valued points. In particular, for any lattice $\mathcal{L}\in \mathcal{S}(\lambda)$, the fiber of $\mathcal{L}$ in $\Sigma(\lambda)$ can be interpreted as a variety of infinitesimal structures on $\mathcal{L}$. The lattices having non-trivial infinitesimal structure are exactly those lying in the boundary of the big cell $\O(\lambda)$.
\end{cor}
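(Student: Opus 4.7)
Plan of proof. The first assertion is immediate from Theorem~\ref{thmRelFunctors}, where $\sigma$ is shown to be a universal homeomorphism; such a map is in particular radicial and surjective, and the induced residue-field extensions are purely inseparable, hence trivial over a perfect base field $k$. Combined with surjectivity this makes $\sigma$ a bijection on $k$-valued points.

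Once the bijection is in hand, the fiber of $\pi' = \pi\circ\sigma$ over any $\mathcal{L}\in\mathcal{S}(\lambda)(k)$ is in set-theoretic bijection with the fiber of the Demazure resolution $\pi$ over $\mathcal{L}$. The different $k$-points of $\mathcal{D}(\lambda)$ over $\mathcal{L}$ are then the different lattice schemes realising $\mathcal{L}$, and it is natural to interpret this finite (in general, positive-dimensional) set as the variety of infinitesimal structures on $\mathcal{L}$. Since $\pi$ is a resolution that is an isomorphism over the big cell $\mathcal{C}(\lambda)$ and has positive-dimensional fibers over its boundary, the lattices admitting several realisations — those with non-trivial infinitesimal structure — are precisely the boundary lattices.

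To derive the reducedness statement of Theorem~\ref{thmIntro}, the plan is to identify $\mathcal{O}(\lambda)(k) = (\pi')^{-1}(\mathcal{C}(\lambda))(k)$ and then show this set coincides with the set of reduced lattice schemes. For the first equality, $\pi'$ is equivariant along the canonical map $\Loop^{\geq 0}G\to\Loop^{F,\geq 0}G$ of \eqref{eqnNonTwistedTwistedMorphisms}, so it sends the single orbit $\mathcal{O}(\lambda)$ into the single orbit $\mathcal{C}(\lambda)$ surjectively; comparing with the singleton-fiber property of the previous paragraph forces equality on $k$-points. The inclusion $\mathcal{O}(\lambda)(k)\subseteq\{\text{reduced}\}$ is then straightforward: every $V\in\mathcal{O}(\lambda)(k)$ has the form $g\cdot V(I(\lambda))$ with $g\in G(k[[z]]^F)$, the $\Loop^{F,\geq 0}G$-action is by $k$-scheme automorphisms of $\Aff_k^{nN}$, and $V(I(\lambda))$ is the coordinate subspace cut out by the variables in \eqref{eqnIdeal}, hence reduced.

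The main obstacle is the reverse inclusion: every reduced $V$ must lie in $\mathcal{O}(\lambda)$. My plan of attack is to exploit the iterated Grassmannian bundle description of Corollary~\ref{corMain}. Building $I$ inductively via $I^{\leq p^0}\subset I^{\leq p^1}\subset\cdots$, the step from $I^{\leq p^{m-1}}$ to $I^{\leq p^m}$ is governed by a point of $\Grass_{\lvert\mu_m\rvert,n}$ recording a subspace $L_m$ of the kernel of $z^{\#}$ on the previous stratum. I expect to show that the boundary $\mathcal{D}(\lambda)\setminus\mathcal{O}(\lambda)$ is characterised at the $m$-th bundle by $L_m$ containing the Frobenius pullback of a class that is not already killed in $I^{\leq p^{m-1}}$; including the corresponding generator of degree $p^m$ then exhibits a non-trivial $p$-th power of a non-zero class in $k[\mathfrak{X}]/I^{\leq p^{m-1}}$, producing a non-zero nilpotent in $k[\mathfrak{X}]/I$ and hence non-reducedness. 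Carrying out this transversality dichotomy explicitly, and matching it against the orbit stratification of $\mathcal{D}(\lambda)$, is where the bulk of the technical work is concentrated.
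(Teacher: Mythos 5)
Your treatment of the easy parts is correct and agrees with the paper: the bijection on $k$-points follows from $\sigma$ being a universal homeomorphism together with perfectness of $k$, and the inclusion $\mathcal{O}(\lambda)(k)\subseteq\{\text{reduced}\}$ is immediate since $I(\lambda)$ is generated by coordinates and the group acts by automorphisms. The equivariance identification $\mathcal{O}(\lambda)(k)=(\pi')^{-1}(\mathcal{C}(\lambda))(k)$ also matches the paper.

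The gap is in the reverse inclusion (non-orbit $\Rightarrow$ non-reduced), which you yourself flag as ``where the bulk of the technical work is concentrated.'' Your plan — to trace the Grassmannian bundle strata and exhibit an explicit nilpotent at each boundary step — is not carried out, and it is not clear it can be made to work without substantial effort: characterising $\mathcal{D}(\lambda)\setminus\mathcal{O}(\lambda)$ purely via which subspaces $L_m$ occur at each level, and matching that against the orbit stratification, is delicate. The paper sidesteps all of this with a much shorter argument you did not find: given a boundary point $V\in\mathcal{D}(\lambda)(k)$, its image in $\mathcal{S}(\lambda)$ lies in the boundary of the big cell by equivariance of $\sigma$, so the reduced lattice scheme $V_{red}$ lies in $\mathcal{O}(\lambda')$ for some $\lambda'<\lambda$ in the Bruhat order. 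But then $V_{red}$ has the Hilbert function of $I(\lambda')$, which differs from that of $I(\lambda)$. Since the Hilbert function is by construction constant on the multigraded Hilbert scheme component containing $\mathcal{D}(\lambda)$, $V_{red}$ cannot equal $V$, so $V$ is non-reduced. No bundle-level analysis is needed. You should replace your sketched transversality argument with this Hilbert-function comparison, which is what makes the last assertion essentially formal once Theorem~\ref{thmRelFunctors} and its equivariance are in place.
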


\begin{proof}
 Only the very last assertion requires a proof: by definition, the lattices in $\mathcal{O}(\lambda)$ are reduced (since $I(\lambda)$ is). On the other hand, take any lattice $\mathcal{L}$ in the boundary of $\mathcal{O}(\lambda)$. Such a lattice maps to the boundary of $\mathcal{S}(\lambda)$ under $\mathcal{D}(\lambda)\to\Sigma(\lambda)\to \mathcal{S}(\lambda)$ by the $\Loop^{\geq 0}\Sl_{n}$-equivariance of $\sigma$ (see proof of theorem \ref{thmRelFunctors}). Thus its reduced structure corresponds to a point in $\mathcal{O}(\lambda')$ for some $\lambda' < \lambda$ (Bruhat-order), whence it has a Hilbert function different from that of $I(\lambda)$. Thus, by constancy of the Hilbert function on $\mathcal{D}(\lambda)$, the reduced structure of $\mathcal{L}$ cannot belong to $\mathcal{D}(\lambda)$.
\end{proof}

Finally, let us briefly study the invariants of lattices in $\mathcal{M}^{(N),h}_{n}$. (if $d$ denotes `the` vector of elementary divisors of a lattice $\mathcal{L}\subset k((z))^{n}$, then by the \emph{invariants} of $\mathcal{L}$ we mean the vector $(\operatorname{val}_{z}d_{1},\dotsc,\operatorname{val}_{z}d_{n})$ with entries ordered by decreasing size.) We will need the following purely combinatorial lemma about partitions of a given positive integer.

\begin{lem}\label{lemPartitions}
Let $\sigma,\sigma'\in\mathbb{N}^{n}$ with $\lvert\sigma\rvert = \lvert\sigma'\rvert$ and such that $\sigma_{i}\geq \sigma_{i+1}$ and $\sigma'_{i}\geq \sigma'_{i+1}$ for all $1\leq i\leq n$. Let $N$ be the maximum of all the $\sigma_{i}$ and $\sigma'_{i}$ and set
\begin{equation*}
\begin{split}
\tau_{i} := \text{number of entries in }\sigma\text{ which are}\geq i,\\
\tau'_{i} := \text{number of entries in }\sigma'\text{ which are}\geq i,
\end{split}
\end{equation*}
for $i=1,\dotsc,N$ (the `dual partitions` for $\sigma$ and $\sigma'$).
Then, with respect to the Bruhat order, $\sigma \geq \sigma'$ if and only if $\tau \leq \tau'$.
\end{lem}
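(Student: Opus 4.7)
The lemma is the classical statement that conjugation of partitions reverses the dominance order, applied in the setting of dominant coweights of $\Sl_n$, where the Bruhat order on weakly decreasing tuples of a fixed total coincides with the dominance order. My plan is to reduce everything to partial-sum inequalities. Writing $S_k:=\sum_{i=1}^{k}\sigma_i$ and similarly $S'_k, T_l, T'_l$, the hypothesis $\sigma\geq\sigma'$ reads $S_k\geq S'_k$ for every $k$ (with $S_n=S'_n$), and the conclusion $\tau\leq\tau'$ reads $T_l\leq T'_l$ for every $l$ (with $T_N=T'_N$).

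The first substantive step will be the standard double counting of cells in the first $l$ columns of the Young diagram of $\sigma$:
$$T_l \;=\; \sum_{i}\min(\sigma_i,l) \;=\; |\sigma|\;-\;\sum_{i}(\sigma_i-l)_{+}.$$
Since $|\sigma|=|\sigma'|$, the desired inequality $T_l\leq T'_l$ becomes equivalent to $\sum_i(\sigma_i-l)_{+}\geq\sum_i(\sigma'_i-l)_{+}$, converting the statement about the conjugate partitions into one about $\sigma$ and $\sigma'$ alone.

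The second step, which closes the argument, is the observation that for a weakly decreasing sequence one has
$$\sum_{i}(\sigma_i-l)_{+} \;=\; \max_{0\leq k\leq n}\bigl(S_k-kl\bigr),$$
because the function $k\mapsto S_k-kl$ has increments $\sigma_k-l$ which are positive up to $k^{\ast}:=\#\{i:\sigma_i>l\}$ and nonpositive thereafter, so its maximum is attained at $k^{\ast}$. The same identity holds for $\sigma'$. Termwise, $S_k\geq S'_k$ implies $S_k-kl\geq S'_k-kl$, and taking the maximum over $k$ on both sides yields $T_l\leq T'_l$ for every $l$. For the converse I will invoke the fact that conjugation is an involution on partitions: applying the forward direction to the pair $(\tau,\tau')$, whose conjugate partitions are precisely $(\sigma,\sigma')$, turns $\tau\leq\tau'$ into $\sigma\geq\sigma'$. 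I do not expect any serious obstacle here; the only slightly non-mechanical step is the ``max of partial sums'' identity, which really just records that a concave integer-valued sequence attains its maximum where its first differences change sign.
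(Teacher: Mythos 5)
Your proof is correct, and it proceeds in the same overall pattern as the paper's: prove the direction ``$\sigma\geq\sigma'$ implies $\tau\leq\tau'$'' after converting it, by the same double counting, into a statement about $\sigma$ and $\sigma'$ alone, and then obtain the converse from the fact that conjugation is an involution. The difference lies in how the one nontrivial direction is closed. The paper writes $\sum_{j>i}(\tau_j-\tau'_j)=\sum_l\bigl(\max\{\sigma_l,i\}-\max\{\sigma'_l,i\}\bigr)$ and then shows this is nonnegative by a case analysis, introducing auxiliary indices $r$ and $s$ depending on where $\sigma$ and $\sigma'$ first drop below $i$ and estimating the resulting tail sums against a prefix of $\sum_l(\sigma_l-\sigma'_l)$. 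You instead invoke the ``Legendre-transform'' identity $\sum_i(\sigma_i-l)_+=\max_{0\leq k\leq n}(S_k-kl)$, valid for weakly decreasing sequences, after which the conclusion falls out from the termwise inequality $S_k-kl\geq S'_k-kl$ by taking maxima. The two arguments establish the same inequality; yours is more systematic and avoids tracking the indices $r,s$, at the cost of having to state and justify the max-of-partial-sums identity (which is short, as you note, since the increments $\sigma_k-l$ change sign exactly once). One small point worth making explicit when you invoke the involution: $\tau$ and $\tau'$ live in $\mathbb{N}^N$ with $N=\max\{\sigma_i,\sigma'_i\}$, so one of them may have trailing zeros, and their conjugates recover $\sigma$ and $\sigma'$ only up to padding with zeros; this does not affect the dominance comparison, but it deserves a sentence.
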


\begin{proof}
By definition, $\tau \leq \tau'$ if and only if for all $1\leq i\leq N$, $0\leq \sum_{j=1}^{i}(\tau'_{j}-\tau_{j})$. Since $\lvert \tau \rvert = \lvert \tau' \rvert$, this is equivalent to $0\leq \sum_{j=i}^{N}(\tau_{j}-\tau'_{j}) = \sum_{j=i}^{N}(\#\lbrace l\suchThat \sigma_{l}\geq j\rbrace - \#\lbrace l\suchThat \sigma'_{l} \geq j\rbrace) = \sum_{l=1}^{n}(\max\lbrace \sigma_{l},i\rbrace - \max\lbrace \sigma'_{l},i\rbrace)$. We show that this holds if $\sigma \geq \sigma'$: Let $1\leq r\leq n$ be the smallest index such that $\sigma_{r} \leq i$ (case 1) or $\sigma'_{r} \leq i$ (case 2) (of course these cases don't exclude each other), and let $1\leq s \leq n$ be the smallest index such that both $\sigma_{s}$ and $\sigma'_{s}$ are $\leq i$. Then in case 1 we have
$$
\sum_{j=i}^{N}(\tau_{j}-\tau'_{j}) = \sum_{l=1}^{r}(\sigma_{l}-\sigma'_{l}) + \sum_{l=r+1}^{s}(i-\sigma'_{l}) \geq \sum_{l=1}^{s}(\sigma_{l}-\sigma'_{l}),
$$
while case 2 yields
$$
\sum_{j=i}^{N}(\tau_{j}-\tau'_{j}) = \sum_{l=1}^{r}(\sigma_{l}-\sigma'_{l}) + \sum_{l=r+1}^{s}(\sigma_{l}-i) \geq \sum_{l=1}^{r}(\sigma_{l}-\sigma'_{l}).
$$
Both expressions are thus non-negative if $\sigma \geq \sigma'$, which proves one direction. The other implication holds by duality: $\tau$ (resp. $\tau'$) can be regarded as the dual partition for $\sigma$ (resp. $\sigma'$). Thus all the arguments remain valid if we interchange $\sigma$ and $\tau'$ (resp. $\sigma'$ and $\tau$).
\end{proof}

Let $h: \mathbb{N}\to \mathbb{N}$ be the Hilbert function of $I(\lambda)$ and let $V \subset \Aff_{k}^{nN}$ be a lattice scheme corresponding to a $k$-valued point in $\mathcal{M}^{(N),h}_{n}$.
Since $k$ is assumed to be perfect, the map in \eqref{eqnPowerSeriesTwisted} for $\bar{R}=k$ is bijective, whence $k[[z]]^{F}$ is by definition isomorphic to $k[[z]]$. Via this isomorphism we can regard the set $V(k)$ of $k$-valued points as a lattice $z^{N}k[[z]]^{n}\subset \mathcal{L} \subset z^{-N}k[[z]]^{n}$ (after multiplication by a suitable power of $z$). From lemma \ref{lemPartitions} we obtain

\begin{cor}
Assume that the defining ideal of $V$ is twisted-linear, and let $\lambda'$ denote the invariants (ordered by decreasing size) of the corresponding lattice $\mathcal{L} \subset k((z))^{n}$. Then $\lambda' \leq \lambda$.
\end{cor}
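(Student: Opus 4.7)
The plan is to translate the problem, via perfectness of $k$, into a comparison of dimensions of certain linear subspaces of $k^{nN}$, using the flag structure that twisted-linearity induces on the defining equations, and to close by combining Lemma \ref{lemPartitions} with Proposition \ref{propHF}.

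First I would set up the linear model. Over perfect $k$, the Frobenius-twisted substitution $\phi\colon y_{j,m}=x_{j,m}^{p^{-m}}$ is a bijection of $k^{nN}$, and every twisted-linear equation $\sum a_{j,m}x_{j,m}^{p^{l-m}}=0$ in $F_{p^l}(k)\cap I$ becomes, after extraction of a $p^l$-th root of coefficients, the honest linear equation $\sum a_{j,m}^{p^{-l}}y_{j,m}=0$ in the variables $y_{j,m}$ with $m\leq l$. Because $V$ is a lattice scheme its set of $k$-points is a $k$-linear subspace of $k^{nN}$, so $L:=\phi(V(k))$ is a linear subspace cut out exactly by these linear forms. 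Writing $H_l\subset(k^{nN})^\vee$ for the $k$-span of the forms from $F_{p^l}(k)\cap I$ and $F^l$ for the span of the $y_{j,m}^*$ with $m\leq l$, the map $g\mapsto g^p$ sends $F_{p^{l-1}}(k)\cap I$ into $F_{p^l}(k)\cap I$ (since $I$ is an ideal) and a short calculation shows it produces the same linear form. This yields an ascending flag
\[
H_0\subset H_1\subset\dotsb\subset H_{N-1}=L^\perp,\qquad H_l\subset F^l.
\]
Proposition \ref{propHF}, applied to both $I$ and $I(\lambda)$, guarantees that the dimensions $d_l:=\dim H_l=\dim(F_{p^l}(k)\cap I)$ coincide with their counterparts $\dim H_l^{\mathrm{std}}$ for the standard ideal; in particular $\dim L=nN-d_{N-1}=nN-\lvert\tilde\lambda\rvert=\dim V(\lambda)(k)$.

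Next I translate $\lambda'\leq\lambda$ into a slice-dimension inequality. By Lemma \ref{lemPartitions} it suffices to show $\sum_{k=1}^{i}\tau_k\leq\sum_{k=1}^{i}\tau'_k$ for the dual partitions of $\tilde\lambda,\tilde\lambda'$. A direct count identifies $in-\sum_{k=1}^{i}\tau'_k$ with the dimension of the image of $\mathcal{L}$ in $k[[z]]^n/z^{i}k[[z]]^n$, which under the identification $V(k)\simeq\mathcal{L}/z^Nk[[z]]^n$ equals $\dim V(k)-\dim W$, where $W:=V(k)\cap\{x_{j,m}=0:m<i\}$; similarly for $\tilde\lambda$ and $W_{\mathrm{std}}$. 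Since $\dim V(k)=\dim V(\lambda)(k)$, the claim reduces to $\dim W\geq\dim W_{\mathrm{std}}$.

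Finally I carry out the flag computation. Under $\phi$ the slice $\{x_{j,m}=0:m<i\}$ corresponds to $K=\{y_{j,m}=0:m<i\}$ with $K^\perp=F^{i-1}$, so
\[
\dim W=\dim(L\cap K)=nN-d_{N-1}-ni+\dim(H_{N-1}\cap F^{i-1}).
\]
The key observation is $H_{i-1}\subset H_{N-1}\cap F^{i-1}$ (by the flag), which gives $\dim(H_{N-1}\cap F^{i-1})\geq d_{i-1}$; for the monomial ideal $I(\lambda)$ a direct inspection shows that equality holds. Combining this with the coincidence $d_l=\dim H_l^{\mathrm{std}}$ from Proposition \ref{propHF} yields $\dim W\geq\dim W_{\mathrm{std}}$, and hence $\lambda'\leq\lambda$. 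The main obstacle is the first paragraph: justifying rigorously the passage from twisted-linear equations to honest linear forms over perfect $k$ and verifying that the $H_l$ form a $k$-linear ascending flag whose top is exactly $L^\perp$; once this is clean, the dimension count is routine.
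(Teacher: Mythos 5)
Your proof is correct and rests on exactly the same two pillars as the paper's: Proposition~\ref{propHF} to transfer the ranks $\dim\bigl(\fpv{l}{k}\cap I\bigr)$ from $I$ to $I(\lambda)$, and Lemma~\ref{lemPartitions} to turn the dominance inequality for dual partitions into $\lambda'\leq\lambda$, with the inclusion $I\subset\sqrt{I}$ supplying the needed $\leq$. The paper gets there faster by first normalizing $V_{\mathrm{red}}$ to the monomial ideal $\langle x_{j,m}:m<\tilde\lambda'_j\rangle$ via a $\Gl_{n}(k[[z]]^{F})$-change of basis and then directly comparing $\dim\bigl(\fpv{i-1}{k}\cap I\bigr)\leq\dim\bigl(\fpv{i-1}{k}\cap I_{\mathrm{red}}\bigr)$, whereas you avoid normalization by linearizing with the Frobenius twist $\phi$ and computing $\dim(L\cap K)$ through orthogonal complements; your inclusion $H_{i-1}\subset H_{N-1}\cap F^{i-1}$ is precisely the $\phi$-image of that same containment of twisted-linear slices, so the bookkeeping is longer but the mechanism is identical.
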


\begin{proof}
Let $I(V)$ denote the defining ideal of $V$.
Multiplying with a suitable matrix in $\Sl_{n}(k[[z]]^{F})$ we may assume that $V_{red}$ is given by the ideal $I(V)_{red}=\langle x_{1,0},\dotsc,x_{1,\tilde{\lambda}'_{1}-1},\dotsc,x_{n,0},\dotsc,x_{n,\tilde{\lambda}'_{n}-1}\rangle$ for some $\tilde{\lambda'}\in \mathbb{N}^{n}$ with $\tilde{\lambda}'_{i}\geq \tilde{\lambda}'_{i+1}$. Since $I(V)$ is twisted linear, its Hilbert function $h$ determines the dimension of the $k$-vector space $I(V)\cap \fpv{N}{k} = I(V)_{red}\cap\fpv{N}{k}$: it is therefore equal to the dimension of $I(\lambda)\cap \fpv{N}{k}$, whence the determinants of the respective lattices conincide. In other words: $\lvert\tilde{\lambda}\rvert = \lvert\tilde{\lambda'}\rvert$. Now we have to show that $\tilde{\lambda} \geq \tilde{\lambda'}$, or equivalently (by lemma \ref{lemPartitions}) $\mu \leq \mu'$, where $\mu$ and $\mu'$ are the respective duals in the sense of lemma \ref{lemPartitions}. Since $\mu_{i} = \dim_{k} I(V)_{p^{i-1}}\cap \fpv{i-1}{k}$, while $\mu_{i}' = \dim_{k} (I(V)_{red})_{p^{i-1}}\cap \fpv{i-1}{k}$, the claim follows.
\end{proof}

Thus the $k$-valued points in $\mathcal{M}_{n}^{(N),h}$ which are given by twisted linear ideals correspond to lattices with invariants $\leq \lambda$. (I don't know, if every $k$-valued valued point in $\mathcal{M}^{(N),h}_{n}$ is twisted-linear.) Thinking of the analogous situation for Schubert varieties in the affine Grassmannian (where the Schubert variety $\mathcal{S}(\lambda)$ parametrizes exactly those lattices with invariants $\leq \lambda$) one could be tempted to think that $\mathcal{M}_{n}^{(N),h}$ and $\mathcal{D}(\lambda)$ coincide. However, looking once more at the simple situation of example \ref{exPullback}, we see that this is in general not the case:

Let $P=k[x_{0},x_{1},y_{0},y_{1}]$, $\lambda=(2,0)$ and set $I=(y_{0},x_{0}^{p})\in \mathcal{T}:=\mathcal{T}_{2}$, $\mathcal{M}:=\mathcal{M}_{2}^{(1),h}$. Note that a deformation $\tilde{I}\in \mathcal{M}(k[\epsilon])$ of $I$ is given by 2 generators with indeterminate coefficients $a,b,c\in k$:
\begin{equation*}
 \begin{split}
  &g_{1} := y_{0}+\epsilon ax_{0},\\
  &g_{2} := x_{0}^{p}+\epsilon (by_{1}+cx_{1}).
 \end{split}
\end{equation*}
Hence $\dim \T_{I}\mathcal{M} = 3$, while the dimension of $\T_{I}\mathcal{T}$ is $2$, e.g. by theorem \ref{thmGrassmannBundle}. (In a more elementary way, we could argue that on $\T_{I}\mathcal{T}$ we have the additional condition that $z^{\#}g_{2} \subset (g_{1})$, which means that $\epsilon(by_{0}^{p}+cx_{0}^{p})$ must be a multiple of $g_{1}^{p}=y_{0}^{p}$. This forces $c=0$, whence again $\dim \T_{I}\mathcal{T} = 2$).

Of course, this exhibits only a difference in the infinitesimal structures at the point $I(\lambda)$. But also the topological spaces of $\mathcal{M}_{n}^{(N),h}$ and $\mathcal{D}(\lambda)$ differ in general:
Let $n=4, \lambda=(1,1,-1,-1)$, whence $N=2$ and $\tilde{\lambda}=(2,2,0,0)$. Then $I(\lambda)=\langle x_{0},x_{1},y_{0},y_{1} \rangle \subset k[x_{i},y_{i},z_{i},w_{i}; i=0,1]$. On the other hand, consider the twisted-linear ideal $I=\langle x_{0},y_{0},z_{0}^{p},z_{1}\rangle$ in the same polynomial ring. Certainly, it has the same Hilbert function as $I(\lambda)$, and it defines a lattice scheme with invariants $(1,0,0,-1)$. Thus it is a $k$-valued point of $\mathcal{M}^{(N),h}_{n}$. However, it violates the condition $z^{\#}I = z^{\#}I^{p} \subset I^{\leq 1}$ which, by \eqref{eqnDefTm}, is satisfied by points of $\mathcal{T}_{2}(\lambda)=\mathcal{D}(\lambda)$.

\bibliographystyle{amsalpha}
\bibliography{DemazureInfStruct}
\end{document}